\documentclass[pdf]{article}

\usepackage[english]{babel}

\usepackage[utf8]{inputenc}
\setlength{\parindent}{2em}
\setlength{\parskip}{0.5em}

\usepackage[utf8]{inputenc}
\usepackage{amsmath}
\usepackage{graphicx}
\usepackage{amssymb}
\usepackage{amsthm}
\usepackage{tikz-cd}
\usepackage{mathrsfs}
\usepackage[colorinlistoftodos]{todonotes}
\usepackage{enumitem}
\usepackage{yfonts}
\usepackage{ dsfont }
\usepackage{MnSymbol}

\title{On collapsing Calabi-Yau fibrations }

\author{Yang Li}

\date{\today}
\newtheorem{thm}{Theorem}[section]
\newtheorem{lem}[thm]{Lemma}
\newtheorem{cor}[thm]{Corollary}
\newtheorem{prop}[thm]{Proposition}

\theoremstyle{definition}

\newtheorem{eg}[thm]{Example}

\newtheorem{conj}[thm]{Conjecture}

\newtheorem*{rmk}{Remark}

\newtheorem{statement}[thm]{Statement}
\newtheorem*{Acknowledgement}{Acknowledgement}

\newcommand{\ie}{\emph{i.e.} }
\newcommand{\cf}{\emph{cf.} }

\newcommand{\C}{\mathbb{C}}
\newcommand{\Z}{\mathbb{Z}}

\newcommand{\norm}[1]{\left\lVert#1\right\rVert}
\newcommand{\Lap}{\Delta}

\DeclareMathOperator{\Hom}{Hom}

\DeclareMathOperator{\Tr}{Tr}

%\usepackage{tikz}
%\usetikzlibrary{arrows,chains,matrix,positioning,scopes}
%
%\makeatletter
%\tikzset{join/.code=\tikzset{after node path={%
		%	\ifx\tikzchainprevious\pgfutil@empty\else(\tikzchainprevious)%
		%	edge[every join]#1(\tikzchaincurrent)\fi}}}
%\makeatother
%
%\tikzset{>=stealth',every on chain/.append style={join},
%	every join/.style={->}}
%\tikzstyle{labeled}=[execute at begin node=$\scriptstyle,
%execute at end node=$]

\begin{document}
\maketitle

\begin{abstract}
We develop some techniques to study the adiabatic limiting behaviour of Calabi-Yau metrics on the total space of a fibration, and obtain strong control near the singular fibres by imposing restrictions on the singularity types. We prove a uniform lower bound on the metric up to the singular fibre, under fairly general hypotheses. Assuming a result in pluripotential theory, we prove a uniform fibre diameter bound for a Lefschetz K3 fibred Calabi-Yau 3-fold, which reduces the study of the collapsing metric to a locally non-collapsed situation, and we identify the Gromov-Hausdorff limit of the rescaled neighbourhood of the singular fibre.
\end{abstract}

%\chapter{}%

\section{Introduction}

The present paper studies the adiabatic limiting behaviour of Ricci flat K\"{a}hler metrics on a Calabi-Yau manifold under the degeneration of the K\"{a}hler class. More precisely, let $(X,\omega_X)$ be a compact $n$-dimensional  K\"{a}hler manifold with a holomorphic volume form $\Omega$. Let $f: X\to Y$ be a holomorphic fibration to an $m$-dimensional base manifold $Y$ where $m<n$, with connected fibres denoted by $X_y$ for $y\in Y$. By the adjunction formula, the smooth fibres are automatically Calabi-Yau manifolds in their own right. We can normalise the fibre volume of $\omega_X$ by $\int_{X_y} \omega_X^{n-m}=1$. Away from the set of critical values $S\subset Y$, the fibration is a smooth submersion. Denote by $\omega_0$ the pullback of a K\"{a}hler metric on $Y$, and consider for each $0<t\leq 1$ the unique Calabi-Yau metric $\tilde{\omega}_t$ in the K\"{a}hler class defined by $\omega_t=\omega_0+t\omega_X$. We are interested in uniform estimates for  $\tilde{\omega}_t$ in terms of the geometry of the fibration, and the asymptotic behaviour as $t\to 0$. The prototype example is a K3 fibration over $\mathbb{P}^1$ with at worst nodal fibres.

%Our work is motivated by studying the degeneration of the Kovalev twisted connected sum construction in the $G_2$ context, where more refined information is required about the Calabi-Yau metric. That situation involves a K3 fibration over a one dimensional base with at worst nodal fibres, which is our prototype model.

This question has been investigated by Tosatti in \cite{To}
and subsequently improved by Tosatti et al \cite{TosattiWeinkoveYang} \cite{TosattiZhang},
 where the following result is proven:

\begin{thm}\label{Tosattireview}
Over any compact subset in $Y\setminus S$, the Calabi-Yau metric $\tilde{\omega_t}$ is uniformly equivalent to $\omega_0+t\omega_X$, with constants independent of $t$. As $t\to 0$, with suitable normalisation the K\"ahler potential of $\tilde{\omega}_t$ with respect to  $\omega_0+t\omega_X$ converges in $C^{1,1}(X,\omega_X)$, and the K\"ahler form converges in $C^0$ to the pullback of the so called generalised K\"ahler Einstein metric $\tilde{\omega}_0$ on the base (\cf section \ref{GHconvergencetobase}). Morever, for $y\in Y\setminus S$, the fibrewise metric $\frac{1}{t} \tilde{\omega}_t|_{X_y}$ converges smoothly to the Calabi-Yau metric on the fibre $X_y$ in the class defined by $[\omega_X|_{X_y}]$.
\end{thm}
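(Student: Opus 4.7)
The plan is to reduce the Calabi-Yau condition to a family of complex Monge-Ampère equations and prove \emph{a priori} estimates uniformly in $t$. Writing $\tilde\omega_t = \omega_0 + t\omega_X + \sqrt{-1}\partial\bar\partial\varphi_t$ with $\sup_X\varphi_t = 0$, the equation reads
$$(\omega_0 + t\omega_X + \sqrt{-1}\partial\bar\partial\varphi_t)^n = c_t\,\Omega\wedge\bar\Omega,\qquad c_t \sim \binom{n}{m}t^{n-m}.$$
After dividing by $t^{n-m}$ the right-hand side has density in $L^p$ for every $p$ against a fixed smooth volume form, so a Kołodziej-type pluripotential estimate, exploiting the bigness of $[\omega_0+\omega_X]$, yields the uniform $L^\infty$ bound $\|\varphi_t\|_\infty \le C$.

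The crucial step is the uniform metric equivalence $C^{-1}(\omega_0+t\omega_X)\le \tilde\omega_t\le C(\omega_0+t\omega_X)$ over a relatively compact $K\Subset Y\setminus S$. I would run the Chern-Lu / Aubin-Yau maximum principle on $\log\Tr_{\tilde\omega_t}(\omega_0+t\omega_X) - A\varphi_t$ for $A$ large: the upper bound follows once the bisectional curvature terms of the reference are absorbed by $-A\sqrt{-1}\partial\bar\partial\varphi_t$, and the reverse bound follows from the upper bound together with the Monge-Ampère equation itself. The main obstacle is that $\omega_0+t\omega_X$ degenerates in the base directions as $t\to 0$, so its bisectional curvature has no $t$-independent lower bound in the naive sense. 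The remedy, going back to \cite{To} and refined in \cite{TosattiWeinkoveYang}, is to replace $\omega_X$ near a fibre in $K$ by a semi-flat closed $(1,1)$-form $\omega_{SF}$ whose restriction to every smooth fibre is the Yau metric in $[\omega_X|_{X_y}]$; the modified reference $\omega_0 + t\omega_{SF}$ has block-diagonal structure with uniformly bounded curvature after the natural fibre-direction rescaling by $t^{-1/2}$, and differs from $\omega_0 + t\omega_X$ by a bounded $\partial\bar\partial$-exact term that can be absorbed into $\varphi_t$.

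Given this metric equivalence, Evans-Krylov and Schauder estimates, rescaled on balls of size $1$ transverse to the fibres and $\sqrt{t}$ along them, yield higher-order estimates and in particular the smooth fibrewise convergence $t^{-1}\tilde\omega_t|_{X_y}\to \omega_{CY,y}$ by uniqueness of the Yau solution in $[\omega_X|_{X_y}]$. For the convergence on the base, a fibre-direction Poincaré inequality forces the $C^{1,1}$ limit of $\varphi_t$ to be a pullback $\varphi_0\circ f$, and fibrewise integration of the Monge-Ampère equation produces the generalised Kähler-Einstein equation $(\omega_Y + \sqrt{-1}\partial\bar\partial\varphi_0)^m = c\cdot f_*(\Omega\wedge\bar\Omega)$ on $Y\setminus S$, which identifies the limit $\tilde\omega_0$.
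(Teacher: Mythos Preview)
The paper does not give a proof of this theorem: it is stated purely as a review of prior work, with the proof attributed to \cite{To} and the refinements in \cite{TosattiWeinkoveYang}, \cite{TosattiZhang}. Your outline is a faithful sketch of the strategy in those references (uniform $L^\infty$ potential bound, Chern--Lu maximum principle with the semi-flat reference, rescaled Evans--Krylov/Schauder, and fibre integration to identify the base limit), so there is nothing to compare against within the present paper.
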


The uniform constants in \cite{To} blow up doubly exponentially near a singular fibre. Our first main result is that under more restrictive hypotheses about the nature of singularity for the fibration, a lower bound for the Calabi-Yau metric remains uniformly true even near the singular fibres. This requires some modifications to Tosatti's methods. 

\begin{thm}\label{lowerboundonmetrictheorem}(\cf section \ref{uniformlowerboundsection})
If the pushforward of the volume measure  $i^{n^2}\Omega\wedge \overline{\Omega}$ is pointwise bounded by a constant multiple of $\omega_Y^m$,\ie,
\begin{equation}\label{volumecondition}
f_*(i^{n^2}\Omega\wedge \overline{\Omega})\leq C\omega_Y^m,
\end{equation}
 then \begin{equation}\label{lowerboundonCalabiYaumetric}
\tilde{\omega_t} \geq C \omega_t,
\end{equation} 
where the constant is uniform in $t$ and on $X$.
\end{thm}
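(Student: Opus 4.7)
The plan is to reduce the lower bound $\tilde{\omega}_t \geq C\omega_t$ to the upper bound $\mathrm{tr}_{\tilde{\omega}_t}(\omega_t) \leq C$—equivalent up to a dimensional constant, since $\omega_t \geq 0$—and then to split this as
\[\mathrm{tr}_{\tilde{\omega}_t}(\omega_t) = \mathrm{tr}_{\tilde{\omega}_t}(\omega_0) + t\cdot \mathrm{tr}_{\tilde{\omega}_t}(\omega_X),\]
attacking each piece separately. As preparation, write the Calabi-Yau equation as $\tilde{\omega}_t^n = c_t\, i^{n^2}\Omega \wedge \overline{\Omega}$ with $c_t$ fixed by $\int_X \tilde{\omega}_t^n = [\omega_t]^n$, so that the cohomological expansion gives $c_t \asymp t^{n-m}$ using the normalisation $\int_{X_y}\omega_X^{n-m}=1$. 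The volume hypothesis~\eqref{volumecondition} then yields the fibrewise-averaged control
\[f_*(c_t\, i^{n^2}\Omega\wedge\overline{\Omega}) \leq C c_t \omega_Y^m \asymp f_*(\omega_t^n),\]
which feeds into a Ko\l{}odziej-type pluripotential argument to produce a uniform bound $\|\varphi_t\|_{L^\infty(X)} \leq C$ on the K\"ahler potential in $\tilde{\omega}_t = \omega_t + i\partial\bar\partial \varphi_t$.

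For the base direction, apply the Schwarz lemma to the holomorphic map $f:(X,\tilde{\omega}_t) \to (Y,\omega_Y)$: using $\mathrm{Ric}(\tilde{\omega}_t) = 0$ and the bound $C_Y$ on the upper bisectional curvature of $\omega_Y$, one gets
\[\Delta_{\tilde{\omega}_t} \log \mathrm{tr}_{\tilde{\omega}_t}(\omega_0) \geq -C_Y\, \mathrm{tr}_{\tilde{\omega}_t}(\omega_0)\]
on the smooth locus of $f$. Since $\log \mathrm{tr}_{\tilde{\omega}_t}(\omega_0) \to -\infty$ along the critical locus, the test function $u = \log \mathrm{tr}_{\tilde{\omega}_t}(\omega_0) - A\varphi_t$ (with $A > C_Y$) attains its maximum in the smooth locus, and the usual maximum principle together with $\|\varphi_t\|_{L^\infty} \leq C$ gives $\mathrm{tr}_{\tilde{\omega}_t}(\omega_0) \leq C$ uniformly on $X$.

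For the fibre direction, apply the Chern-Lu inequality with the fixed K\"ahler reference $\omega_X$:
\[\Delta_{\tilde{\omega}_t} \log \mathrm{tr}_{\tilde{\omega}_t}(\omega_X) \geq -C_X\, \mathrm{tr}_{\tilde{\omega}_t}(\omega_X),\]
and form $v = \log \mathrm{tr}_{\tilde{\omega}_t}(\omega_X) - (2C_X/t)\varphi_t$, where the coefficient $2C_X/t$ is chosen so that the positive contribution $t\cdot\mathrm{tr}_{\tilde{\omega}_t}(\omega_X)$ coming from $-(2C_X/t)\Delta_{\tilde{\omega}_t}\varphi_t = -(2C_X/t)(n - \mathrm{tr}_{\tilde{\omega}_t}(\omega_t))$ dominates the Chern-Lu term $-C_X\, \mathrm{tr}_{\tilde{\omega}_t}(\omega_X)$; at a maximum point of $v$ this yields $\mathrm{tr}_{\tilde{\omega}_t}(\omega_X) \leq 2n/t$, i.e.\ $t\cdot \mathrm{tr}_{\tilde{\omega}_t}(\omega_X) \leq 2n$ there. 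The main obstacle is propagating this pointwise bound globally: the naive exponentiation loses a factor $\exp\bigl((2C_X/t)\,\mathrm{osc}(\varphi_t)\bigr)$, which is not uniform in $t$ unless $\mathrm{osc}(\varphi_t) = O(t)$. I expect the required modification of Tosatti's method in this paper to replace $\varphi_t$ in the test function $v$ by a smaller \emph{relative} potential—obtained by subtracting either a local potential for $\omega_0$ pulled back from the base or a semi-flat fibrewise ansatz—so that only a small effective oscillation enters the exponent, at which point~\eqref{volumecondition} is exactly what is needed to close the argument uniformly up to the singular fibres.
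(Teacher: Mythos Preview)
Your outline matches the paper's strategy closely: split $\Tr_{\tilde\omega_t}\omega_t$ into base and fibre parts, cite the uniform $L^\infty$ bound on $\varphi_t$, and run Chern--Lu separately against $\omega_0$ and $\omega_X$. The base-direction argument is correct and is exactly Tosatti's Lemma~3.1.

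The gap is in the fibre direction. Your final paragraph correctly diagnoses the obstacle (the factor $\exp\bigl(\tfrac{C}{t}\,\mathrm{osc}\,\varphi_t\bigr)$) but then stops at speculation. Neither of your two guesses---subtracting a local potential for $\omega_0$, or a semi-flat ansatz---is what the paper does, and neither by itself produces the needed $O(t)$ fibrewise oscillation. The actual mechanism has two parts you are missing:

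\begin{enumerate}
\item One first proves directly that $\mathrm{osc}_{X_y}\varphi_t \leq Ct$ on every smooth fibre. This is a \emph{fibrewise} application of Yau's $L^\infty$ estimate: from $\Tr_{\tilde\omega_t}\omega_0\leq C$ and the prescribed volume one gets $\tilde\omega_t^{n-m}|_{X_y}\leq Ct^{n-m}\omega_{SF,y}^{n-m}$, and similarly for $\omega_t|_{X_y}$. The volume hypothesis~\eqref{volumecondition} enters here, and only here: it bounds $\int_{X_y}i^{(n-m)^2}\Omega_y\wedge\overline{\Omega_y}$, hence the diameter of $(\omega_{SF,y})$, hence the Sobolev and Poincar\'e constants needed for Yau's estimate to be uniform in $y$.
\item The correction subtracted from $\varphi_t$ in the test function is the \emph{fibrewise average} $\underline{\varphi_t}(y)=\int_{X_y}\varphi_t\,\omega_X^{n-m}$, pulled back to $X$. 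One checks (again using~\eqref{volumecondition}) that $dd^c\underline{\varphi_t}\geq -C\omega_Y$ as a current, so $\Delta_{\tilde\omega_t}\underline{\varphi_t}\geq -C$. With $u=\log\Tr_{\tilde\omega_t}\omega_X-\tfrac{C}{t}(\varphi_t-\underline{\varphi_t})$ the coefficient $\tfrac{C}{t}$ now multiplies a quantity of oscillation $O(t)$, and the maximum principle closes.
\end{enumerate}

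There is also a regularity point you did not anticipate: $\underline{\varphi_t}$ is only continuous at the critical values of $f$, not smooth, so the maximum principle must be run in weak form (multiply by $(u-\text{threshold})_+$ and integrate by parts). Without item~(1) you have no reason to expect $O(t)$ oscillation of any relative potential, and without item~(2) you have no usable subtraction whose $\tilde\omega_t$-Laplacian is controlled independently of $t$.
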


%\begin{rmk}
%If the condition \ref{volumecondition} is not satisfied everywhere, then the more general statement would be that the uniform lower bound on $\tilde{\omega_t}$ holds for regions of $X$ where the condition holds. This follows from our proof combined with the original argument of \cite{To}.
%\end{rmk}

\begin{rmk}
The uniform upper bound is not true, because otherwise $\tilde{\omega_t}$ will be uniformly equivalent to $\omega_0+t\omega_X$, but the Calabi-Yau volume form $\tilde{\omega_t}^n$ is in general much larger than $\omega_t^n$ at the 
critical points of the fibration.
\end{rmk}

We shall assume this volume condition (\ref{volumecondition}) for most of the work. It is true for example for Lefschetz fibrations with $n\geq 3$, but not for elliptic fibrations with $I_1$ singularities.
The lower bound on the metric coupled with the knowledge of volume form gives also an upper bound on the metric, but this bound blows up near the critical point, and in particular is unable to imply a good bound on the diameter of the fibres. We attempt to overcome this by developing some general tools to bound the distance function of K\"ahler metrics from knowledge on the volume density. %, which will eventually be applied to the normalised fibrewise metric $\frac{1}{t}\tilde{\omega}_t|_{X_y}$.
As a particular consequence,

\begin{thm}
On a fixed compact K\"ahler manifold with a given background metric $\omega$, if we solve the Monge-Amp\`ere equation with a suitably normalised smooth density function $h$,
\[
\omega'=\omega+\sqrt{-1}\partial\bar{\partial} \psi, \quad \omega'^n=h\omega^n,
\]
then the diameter of the new metric $\omega'$ can be bounded in terms of an $L^p$ bound on $h$, for any $p>1$.
\end{thm}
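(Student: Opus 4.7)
The plan is to proceed in two stages: first upgrade the $L^p$ bound on the density $h$ to an $L^\infty$ bound on the Monge-Ampère potential via Kolodziej's theorem, and then convert this potential control into a distance bound by a subtle integration-by-parts argument involving auxiliary Monge-Ampère solutions.

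\emph{Stage 1.} After normalising $\sup \psi = 0$, Kolodziej's $L^\infty$ estimate for the complex Monge-Ampère equation yields $\|\psi\|_{L^\infty} \leq C$, where $C$ depends only on $(X, \omega)$, $p$, and $\|h\|_{L^p}$. This is the standard starting input and depends crucially on $p > 1$. It alone does not control distances: the issue is that $\omega'$ can be arbitrarily degenerate in the directions where $\psi$ is flat, so pointwise bounds on $\psi$ do not directly translate into metric comparisons with $\omega$.

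\emph{Stage 2.} Fix a base point $x_0 \in X$ and let $B_r = B_{\omega'}(x_0, r)$ denote the geodesic $\omega'$-ball. The goal is to show that $|B_r|_{\omega'}$ grows at least like a definite power of $r$ for $r$ in an interval controlled by $\|h\|_{L^p}$, so that once $r$ exceeds an explicit threshold $D$ the ball must exhaust $X$. To produce the desired growth, I would solve an auxiliary complex Monge-Ampère equation of the form
\[
\left(\omega + \sqrt{-1}\partial\bar\partial u_r\right)^n = \frac{c_r}{|B_r|_{\omega'}}\,\chi_{B_r}\, \omega'^n,
\]
with the right-hand side renormalised to have total mass $\int_X \omega^n$. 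Its density is in $L^p$ with norm controlled by $\|h\|_{L^p}$ and by $|B_r|_{\omega'}^{-1}$, so Kolodziej's theorem applies once more, furnishing $\|u_r\|_{L^\infty} \leq C(|B_r|_{\omega'},\|h\|_{L^p})$.

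The heart of the argument is then an integration-by-parts identity that couples $u_r$ to the distance function $d_{\omega'}(x_0, \cdot)$: one uses the positivity $\omega + \sqrt{-1}\partial\bar\partial u_r \geq 0$ together with a test function built from a cutoff of $d_{\omega'}(x_0, \cdot)$, expands $(\omega + \sqrt{-1}\partial\bar\partial u_r)^n$ against appropriate forms involving $\omega'$, and exploits the concentration of the right-hand side on $B_r$ to extract a quantitative lower bound for $|B_r|_{\omega'}$ in terms of $r$. This is the step that directly implies the diameter bound: if the diameter exceeds the threshold $D$, one obtains a contradiction between the Kolodziej-type $L^\infty$ control and the forced lower bound on ball volumes.

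The main obstacle — and the reason the theorem is stated as a consequence of more general tools rather than proved by a short classical argument — is precisely this integration-by-parts step. One must design the auxiliary equation so that the resulting inequality is \emph{linear} in the distance, with constants that depend only on $\|h\|_{L^p}$ and the background data and do not degenerate as $r$ grows. Standard Moser-iteration or Laplacian-comparison arguments are unavailable here because no Ricci lower bound is assumed; the novelty lies in passing from $L^\infty$ potential control to distance control purely through pluripotential-theoretic means.
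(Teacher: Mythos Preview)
Your route diverges substantially from the paper's, and the part you flag as ``the main obstacle'' is in fact the entire content of the argument, so as written there is a genuine gap. The paper does not attempt to extract volume growth of $\omega'$-balls from auxiliary Monge--Amp\`ere equations. Instead it runs as follows: from the $L^p$ bound on $h$ one invokes not merely Ko\l odziej's $L^\infty$ estimate but the stronger \emph{H\"older} estimate (Theorem~A* of \cite{Ko}), giving $\|\psi\|_{C^\alpha}\leq C$ for some $\alpha=\alpha(n,p)>0$. One then works with the distance function $\rho_x=d_{\omega'}(x,\cdot)$ and the elementary pointwise bound $|\nabla_\omega \rho_x|^2\leq \Tr_\omega\omega'$. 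The key step is a Chern--Levine type inequality: for a cutoff $\chi$ supported in a coordinate ball $B(2r)$,
\[
\int_{B(r)}\Tr_\omega\omega'\,\omega^n \;\leq\; n\int \chi\,\omega'\wedge\omega^{n-1}
\;=\; Cr^{2n} + n\int (\psi-\inf_{B(2r)}\psi)\,dd^c\chi\wedge\omega^{n-1}
\;\leq\; Cr^{2n-2+\alpha},
\]
where the last inequality uses $\mathrm{osc}_{B(2r)}\psi\leq Cr^\alpha$. This places $\rho_x$ in a Morrey space, and Morrey's embedding yields $\rho_x\in C^{\alpha/2}$ with uniform constants; the diameter bound is immediate.

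By contrast, your Stage~2 only uses $\|\psi\|_{L^\infty}$, which is too weak to feed into this mechanism (it would give Morrey exponent $2n-2$, the borderline case), and you compensate by proposing an auxiliary-equation scheme whose decisive inequality you do not produce. Schemes of that flavour (\`a la Guo--Phong--Song et al.) can be made to work, but the ``integration-by-parts identity coupling $u_r$ to $d_{\omega'}(x_0,\cdot)$'' you allude to is exactly the nontrivial part and cannot be left as a black box. If you want to salvage your outline, you should either (i) upgrade Stage~1 to the H\"older estimate and then follow the Chern--Levine/Morrey route above, or (ii) actually write down the comparison inequality linking $\|u_r\|_{L^\infty}$, $r$, and $|B_r|_{\omega'}$, which requires a careful choice of test function and is not a routine manipulation.
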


We want to apply this idea to control the rescaled fibre metrics $\frac{1}{t}\tilde{\omega}_t|_{X_y}$, up to the singular fibre. We shall make use of a very special case of the following conjectural statement, which is an extension of Theorem A* in \cite{Ko} and is essentially conjectured in \cite{Ko}. 

\begin{statement}\label{Holderboundpluripotentialtheory}(See \cite{Ko}, discussion after Theorem A*)
Given a deformation family of smooth compact $n$-dimensional K\"ahler manifolds $(\mathcal{X}_y, \omega_y)$ over a small disc, consider a family of classes $[\theta_y]$ on the fibres, which have smooth semi-positive representing forms $\theta_y$,  with uniformly $C^\infty$ bounded local potentials. If we solve the Monge-Amp\`ere equation in the class $[\theta_y]$,
\begin{equation}
\theta_y'=\theta_y+\sqrt{-1}\partial\bar{\partial} \psi_y, \quad \theta_y'^n=h_y\theta_y^n, \quad \sup_{\mathcal{X}_y} \psi_y=0,
\end{equation}
then for $p>1$, a uniform $L^p$ bound on the volume density $h_y$ implies a uniform H\"older estimate on the K\"ahler potential $\psi_y$. The H\"older exponent $\alpha$ can be taken as $0<\alpha<\frac{2}{1+nq}$, where $q$ is the conjugate exponent of $p$.	
\end{statement} 

\begin{rmk}
We warn the reader again that this statement is not proven to date.	
\end{rmk}

Using this and some special algebro-geometric facts in dimension 3, we show
\begin{thm}(\cf section \ref{Generalboundsondistancefunctions}) Assume statement \ref{Holderboundpluripotentialtheory}.
In the case where $f$ is a Lefschetz fibration and $n=3$, the following fibre diameter bound holds uniformly for all $t$ and all fibres:
\begin{equation}\label{uniformfibrediameterbound1}
\text{diam}( {\frac{1}{t}\tilde{\omega}_t|_{X_y}   }  ) \leq C.
\end{equation}
\end{thm}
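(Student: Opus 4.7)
The plan is to apply Statement \ref{Holderboundpluripotentialtheory} fibrewise to control the rescaled fibre Kähler potentials in Hölder norm, and then convert this into the diameter bound via the Monge-Ampère distance estimates of Section \ref{Generalboundsondistancefunctions}. On any smooth fibre $X_y$ the pullback $\omega_0|_{X_y}$ vanishes, so
$$\frac{1}{t}\tilde{\omega}_t\big|_{X_y}=\omega_X\big|_{X_y}+\sqrt{-1}\partial\bar{\partial}\tilde{\varphi}_{t,y},$$
where $\tilde{\varphi}_{t,y}$ is (up to an additive constant) the fibrewise rescaled Calabi-Yau potential. Crucially, the rescaled metric lives in the \emph{fixed} class $[\omega_X|_{X_y}]$, with fibre volume normalised to one. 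Thus the task reduces to bounding the volume density
$$h_{t,y}:=\frac{(\tfrac{1}{t}\tilde{\omega}_t|_{X_y})^{n-m}}{\omega_X|_{X_y}^{n-m}}$$
in $L^p(X_y,\omega_X|_{X_y}^{n-m})$ for some $p>1$, uniformly in $t$ and $y$.

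The key input is the lower bound $\tilde{\omega}_t\geq C\omega_t$ of Theorem \ref{lowerboundonmetrictheorem} together with the Calabi-Yau identity $\tilde{\omega}_t^n=c_t\,i^{n^2}\Omega\wedge\overline{\Omega}$, in which $c_t\sim t^{n-m}$ by the cohomological normalisation. Decomposing the tangent bundle into fibre and horizontal parts and applying the Schur-complement identity $\det\tilde{g}_t=\det(\tilde{g}_t|_V)\det(\tilde{g}_t|_H)$, the lower bound forces $\det(\tilde{g}_t|_H)\geq C\det(g_t|_H)$, so after unwinding the rescaling
$$h_{t,y}(x)\lesssim \frac{1}{\det(g_t|_H)(x)}$$
up to a bounded factor coming from $|\Omega|_{\omega_X}^2$. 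In the Lefschetz normal form $f(z_1,z_2,z_3)=z_1^2+z_2^2+z_3^2$ near a critical point, $g_t|_H$ is one-dimensional and comparable to $|z|^2+t$ (the sum of $|df|^2\sim|z|^2$ and the $t\omega_X$-contribution), so $h_{t,y}(z)\lesssim(|z|^2+t)^{-1}$ near the node and is $t$-independent elsewhere. Since the fibre has real dimension $4$ and $|z|\gtrsim\sqrt{|y|}$ on the Milnor fibres $\{z_1^2+z_2^2+z_3^2=y\}$, a direct calculation shows that $h_{t,y}\in L^p$ uniformly in $t,y$ for every $1<p<2$.

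Statement \ref{Holderboundpluripotentialtheory} requires a \emph{smooth} deformation family, whereas $\{X_y\}$ degenerates at the critical values. Here the restriction to $n=3$ enters crucially, via the Atiyah-Brieskorn simultaneous resolution: after the base change $y=s^2$, the family $\{X_{s^2}\}$ admits a smooth total space $\{\widetilde{X}_s\}$ of K3 surfaces over a disc in $s$, with $\widetilde{X}_s\cong X_{s^2}$ for $s\neq 0$ and $\widetilde{X}_0$ the minimal resolution of the nodal $X_0$. The pulled-back class $[\omega_X|_{\widetilde{X}_s}]$ is semi-positive with $C^\infty$-bounded local potentials inherited from the ambient $\omega_X$, exactly matching the hypotheses of Statement \ref{Holderboundpluripotentialtheory}; transferring the Monge-Ampère equation and applying the statement produces a uniform Hölder estimate for $\tilde{\varphi}_{t,y}$. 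Combining this Hölder estimate with the $L^p$ density control and the Monge-Ampère diameter machinery of Section \ref{Generalboundsondistancefunctions} yields (\ref{uniformfibrediameterbound1}). The main obstacle is the pointwise density estimate: controlling $\det(\tilde{g}_t|_H)$ near a degenerating Lefschetz critical point through the Schur complement requires a careful choice of horizontal complement and explicit use of the local normal form, and one must also verify that the $C^\infty$-boundedness hypothesis of Statement \ref{Holderboundpluripotentialtheory} genuinely holds uniformly on the resolved family.
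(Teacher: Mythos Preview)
Your overall strategy matches the paper's: bound the fibrewise Monge--Amp\`ere density, pass to the Atiyah--Brieskorn simultaneous resolution to obtain a smooth family, invoke Statement~\ref{Holderboundpluripotentialtheory} for a uniform H\"older potential estimate, and then feed this into the distance machinery of Section~\ref{Generalboundsondistancefunctions}. The density bound $h_{t,y}\lesssim 1/H$ and its $L^p(\omega_X|_{X_y})$ integrability for $p<2$ are correct (this is exactly Example~\ref{ODPexample1}).

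There is, however, a genuine gap at the transfer step. Your $L^p$ bound is taken with respect to the \emph{degenerating} measure $\omega_X|_{X_y}^{n-1}$ on the original fibres. Statement~\ref{Holderboundpluripotentialtheory} is formulated on the resolved family $\widetilde{X}_s$ with its \emph{smooth} background $\omega_{\widetilde{X}_s}$, and the $L^p$ hypothesis there must be read relative to that smooth reference (otherwise the density is not even well-defined where $\theta_s^n$ vanishes). These two measures are not uniformly comparable as $s\to 0$: near the exceptional curve one has $\theta_s^2\sim H\,\omega_{\widetilde{X}_s}^2$, so $\int h_{t,y}^p\,\omega_{\widetilde{X}_s}^2 \sim \int H^{-p-1}\theta_s^2$, which by the same integrability threshold is uniformly bounded only for $p<1$. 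Thus your density $h_{t,y}$ is \emph{not} uniformly in $L^p(\omega_{\widetilde{X}_s})$ for any $p>1$, and Statement~\ref{Holderboundpluripotentialtheory} cannot be applied as you describe.

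The paper closes this gap by exploiting a feature of the small resolution that you do not use: it is \emph{crepant}, so the fibrewise holomorphic volume form (equivalently $\tfrac{1}{H}\omega_X^2|_{X_y}$) pulls back to a smooth nowhere-vanishing form on $\widetilde{X}_s$. Consequently the Monge--Amp\`ere measure itself satisfies $\omega'^2\leq\tfrac{C}{H}\omega_X^2|_{X_y}\leq C\,\omega_{\widetilde{X}_s}^2$, i.e.\ the density relative to the smooth resolved volume is $L^\infty$, not merely $L^p$ for some $p<2$. This is the correct input for Statement~\ref{Holderboundpluripotentialtheory} (with $p=\infty$, $q=1$), and the rest of your outline then goes through. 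In short, the resolution does more than provide a smooth parameter space: it simultaneously regularises the volume density, and this second effect is essential.
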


This uniform fibre diameter bound implies that the neighbourhood of any fibre with the metric $\frac{1}{t}\tilde{\omega}_t$ satisfies a uniform local non-collapsing condition, which links the work to Gromov-Hausdorff convergence theory, and enables the application of many standard geometric inequalities. It is central to everything which follows.

\begin{rmk}
The author expects that (\ref{uniformfibrediameterbound1}) holds more generally in the Lefschetz fibration case with $n> 3$. The difficulty of the general case lies in the pluripotential theoretic problem of achieving uniform H\"older type bounds on the K\"ahler potential near the singular fibre.
\end{rmk}

%For the general case, the author manages to reduce the problem to a uniform weighted H\"older bound on some K\"ahler potential. This relates to a circle of ideas in pluripotential theory, and the difficulty is to make the estimates uniform in complex structures. By adapting some work of Tian and Yau on the $\alpha$-invariant, the author is able to show a uniform version of the Skoda estimate in his case, and therefore a uniform H\"older type bound on the potential, albeit without the crucial weight factor.  

As a particular application of the techniques developed in this paper, assuming statement \ref{Holderboundpluripotentialtheory}, we obtain strong control on the collapsing Calabi-Yau metric for a Lefschetz K3 fibration. The proof with more general conditions is spread out in the paper.

\begin{thm} Assume statement \ref{Holderboundpluripotentialtheory}.
In the case $f$ is a Lefschetz K3 fibration with at worst nodal fibres, we have
\begin{itemize}
\item The fibrewise metric $\frac{1}{t}\tilde{\omega}_t|_{X_y}$ converges smoothly to the Calabi-Yau metrics on $X_y$,
away from the critical points but not necessarily away from the singular fibres. (\cf section \ref{smoothboundawayfromsingularity} and \ref{convergenceoveronedimensionalbasesection} )
	
\item The neighbourhood of any given fibre with the rescaled  metric $\frac{1}{t}\tilde{\omega}_t$ satisfies a uniform local non-collapsing condition, so we can take the non-collapsed pointed Gromov-Hausdorff limit around any given point $P$, as $t\to 0$. (\cf section \ref{Diameterestimatessection})

\item For the singular fibre $y=f(P)\in S$, the above Gromov-Hausdorff limit is the product metric on $X_y\times \C$, where on the $X_y$ factor we use the orbifold Calabi-Yau metric, and on the $\C$ factor we use a suitably normalised Euclidean metric. (\cf section  \ref{GHlimitaroundsingularfibresection})
	
\item The Gromov-Hausdorff limit of $(X,\tilde{\omega}_t)$ agrees isometrically with the generalised K\"ahler-Einstein metric on $Y$. (\cf section \ref{GHconvergencetobase})
\end{itemize}
\end{thm}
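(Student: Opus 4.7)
The plan is to assemble the theorem from three ingredients set up earlier: the uniform lower bound $\tilde{\omega}_t\geq C\omega_t$ of Theorem \ref{lowerboundonmetrictheorem}, the uniform fibre diameter bound (\ref{uniformfibrediameterbound1}) (conditional on Statement \ref{Holderboundpluripotentialtheory}), and Tosatti's convergence theory (Theorem \ref{Tosattireview}). For the first bullet, over $Y\setminus S$ fibrewise smooth convergence is already in Theorem \ref{Tosattireview}; to extend it up to the singular fibres but away from critical points of $f$, I would work on a local polydisc around a smooth point of a singular fibre, where $f$ is still a submersion and the relative holomorphic volume form is nowhere zero. The fibrewise Monge-Amp\`ere equation for $\frac{1}{t}\tilde{\omega}_t|_{X_y}$ then has a smooth and uniformly bounded right-hand side, thanks to the Calabi-Yau identity $\tilde{\omega}_t^n=c_t\,i^{n^2}\Omega\wedge\overline{\Omega}$ combined with (\ref{volumecondition}) and the lower bound (\ref{lowerboundonCalabiYaumetric}). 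Ko\l odziej's $L^\infty$ estimate followed by Evans-Krylov and Schauder then yield uniform local $C^{k,\alpha}$ bounds, and Arzel\`a-Ascoli together with uniqueness of the smooth fibrewise Calabi-Yau metric gives the claimed smooth convergence.

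For the second bullet (local non-collapsing around a point $P$), the fibre diameter bound gives fibre-direction diameter $\leq C$ in $\frac{1}{t}\tilde{\omega}_t$, while the base-direction metric $\frac{1}{t}f^*\omega_Y$ becomes a fixed Euclidean metric on $\C$ under the rescaling $w\mapsto w/\sqrt{t}$ of a holomorphic coordinate $w$ on $Y$ at $f(P)$. A direct computation then shows that the unit ball at $P$ in $\frac{1}{t}\tilde{\omega}_t$ has volume bounded below uniformly in $t$, using the volume identity together with the two-sided bound on the fibrewise integral of $i^{n^2}\Omega\wedge\overline{\Omega}$ (upper from (\ref{volumecondition}), lower from the Lefschetz nodality, which keeps the fibre volume finite and positive through the degeneration). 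Ricci-flatness and Bishop-Gromov then give Euclidean volume growth on smaller scales, and Cheeger-Gromov compactness produces subsequential pointed Gromov-Hausdorff limits around any such $P$.

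For the third bullet, at a critical point $P$ in a singular fibre $X_y$ I would identify the limit as $X_y\times\C$ in two steps. The $\C$ factor: the rescaling $w\mapsto w/\sqrt{t}$ above produces almost isometric embeddings of Euclidean discs $B_R(0)\subset\C$ into $(X,\frac{1}{t}\tilde{\omega}_t)$ with shrinking distortion, which in the limit is a line, so the Cheeger-Colding splitting theorem forces the limit to split isometrically as $\C\times Z$. The $X_y$ factor: the first-bullet fibrewise convergence identifies $Z$ with $X_y$ away from the ODPs, while the H\"older bound from Statement \ref{Holderboundpluripotentialtheory} extends the Calabi-Yau potential continuously across each ODP; gluing to the Eguchi-Hanson/Stenzel local model then pins down the metric on $Z$ as the unique orbifold Calabi-Yau metric on $X_y$. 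The fourth bullet finally follows because the $\tilde{\omega}_t$-diameter of any fibre is $O(\sqrt{t})\to 0$ by the rescaled fibre diameter bound, so even singular fibres contract to their image points in $Y$; combined with Tosatti's convergence over $Y\setminus S$ this globalises to GH convergence to the generalised K\"ahler-Einstein base $(Y,\tilde{\omega}_0)$.

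The main obstacle is the third bullet: establishing the splitting off the flat $\C$ factor requires controlling the horizontal-vertical decomposition of $\tilde{\omega}_t$ uniformly up to the critical point, and verifying that the Ricci-flat length space $Z$ is exactly the orbifold K3 rather than some other candidate of the correct dimension is the technical heart of the argument. This is where Statement \ref{Holderboundpluripotentialtheory} and the uniform lower bound of Theorem \ref{lowerboundonmetrictheorem} are used most crucially; without either of them the splitting argument reduces to the situation analysed by Tosatti, where the horizontal-vertical decomposition loses quantitative control near the singularity and the limit metric on $Z$ cannot be pinned down.
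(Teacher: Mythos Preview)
Your outline is broadly on target for bullets 2 and 4, but it diverges from the paper's actual arguments for bullets 1 and 3, and in both places the difference is not cosmetic: the paper relies on an ingredient you have not invoked, namely the \emph{concentration estimate} for $\Tr_{\tilde\omega_t}\omega_{Y,y}$ (Proposition~\ref{concentrationestimate}), obtained from the Chern--Lu inequality combined with a three-circle/Harnack argument over the one-dimensional base.

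For bullet 1, your Arzel\`a--Ascoli plus uniqueness scheme does not close. Uniform smooth bounds on $\frac{1}{t}\tilde\omega_t|_{X_y}$ (which the paper indeed proves in Section~\ref{smoothcontrolawayfromcriticalpoints}) give subsequential limits, but the fibrewise volume form of $\frac{1}{t}\tilde\omega_t|_{X_y}$ is \emph{not} a priori the Calabi--Yau volume form: it equals $\frac{a_t}{n}\,\Tr_{\tilde\omega_t}\omega_{Y,y}\cdot \omega_{SF,y}^{n-1}$, so identifying the limit with $\omega_{SF,y}$ requires knowing that the trace concentrates at the constant $n/a_t$. That is exactly the content of the concentration estimate, and it is what turns compactness into convergence (with an effective $O(1/|\log t|^{1/2-\epsilon})$ rate).

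For bullet 3, the paper does \emph{not} use Cheeger--Colding splitting. Instead it builds holomorphic comparison maps $f_t:(Z_t,\frac{1}{t}\tilde\omega_t)\to X\times\C$, $x\mapsto (x,t^{-1/2}(f-f(P)))$, in the spirit of Donaldson--Sun, and passes to a limit $f_\infty:Z\to X_{f(P)}\times\C$. It shows $f_\infty$ is a biholomorphism over the smooth locus, then uses the concentration estimate again to obtain the Riemannian submersion property $\Tr_{\omega_\infty}\omega_\C=1$, and a refined Chern--Lu gradient estimate (Section~5.2) to show $du$ is $\omega_\infty$-parallel. The product structure then follows from the absence of holomorphic vector fields on a (nodal) K3. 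Your proposed route via splitting would need to produce two independent lines to split off $\C=\R^2$, and then still identify the cross-section $Z$ with the orbifold K3 as a metric space; both steps ultimately require the same horizontal control that the concentration and gradient estimates supply, so you would be re-deriving the paper's key lemmas in disguise. As written, the splitting step and the ``gluing to the Stenzel model'' step are genuine gaps.
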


\begin{rmk}
Most of the statements in the above theorem uses only (\ref{volumecondition}), (\ref{lowerboundonCalabiYaumetric}), (\ref{uniformfibrediameterbound1}) and the fact that the base $Y$ has dimension 1.
\end{rmk}

\begin{Acknowledgement}
The author is grateful to his PhD supervisor Simon Donaldson and co-supervisor Mark Haskins for their inspirations, suggestions and encouragements, and to the London School of Geometry and Number Theory (Imperial College London, UCL and KCL) for providing a stimulating research environment. He would also like to thank H-J. Hein and V. Tosatti for comments, and B. Bendtsson, Y. Zhang, E. Di Nezza, S. Dinew for answering some questions.

This work was supported by the Engineering and Physical Sciences Research Council [EP/L015234/1], the EPSRC Centre for Doctoral Training in Geometry and Number Theory (The London School of Geometry and Number Theory), University College London. The author is also funded by Imperial College London for his PhD studies.
\end{Acknowledgement}

%\begin{thm}
%	For a Lefschetz fibration, with $n\geq 3$, the fibrewise restriction of the Calabi-Yau metric has uniform diameter bounds $Ct^{1/2}$, and after appropriate scaling, the Calabi-Yau metric is locally noncollapsed as $t$ tends to zero.
%\end{thm}

%\textcolor{red}{After this point, most results which assert behaviours on singular fibres depend on the previous theorem, but they can also be read in a way which works for the smooth fibres without such an assumption.}

%We use these tools to study convergence behaviour of fibre metrics $\tilde{\omega_t}|_{X_y}$. We show that under condition \ref{volumecondition}, away from critical points of the fibration, but not necessarily away from the singular fibre, we can find holomorphic coordinates where the Calabi-Yau metric has smooth bounds. Morever, we show that 

%\begin{thm}
%If the base is one dimensional, and the fibration is Lefschetz with $n\geq 3$, then away from critical points but not necessarily away from the singular fibre, the fibre metric under appropriate scaling converges smoothly to the Calabi-Yau metric on the fibre, where the rate of convergence estimate is $O(1/|\log t|)$. Without the Lefschetz fibration assumption, the result applies to compact regions away from singular fibres. 
%\end{thm}

\section{The uniform lower bound on the metric}\label{uniformlowerboundsection}

\subsection{The meaning of the volume condition (\ref{volumecondition})}

To explain our volume condition (\ref{volumecondition}), consider a finite coordinate cover of $Y$, where on each coordinate patch we are given a holomorphic $m$-form $dy_1\wedge dy_2 \ldots \wedge dy_m$. We can arrange that these are uniformly equivalent on overlaps of charts. The global holomorphic $n$-form induces by adjunction the fibrewise holomorphic $(n-m)$-forms $\Omega_y$ for $y\in Y$:
\begin{equation}\label{fibrewiseholomorphicvolume}
\Omega=dy_1\wedge dy_2 \ldots \wedge dy_m \wedge \Omega_y.
\end{equation}
Then the condition (\ref{volumecondition}) means that the volumes integrals on the fibres defined by $F_y=\int_{X_y} i^{(n-m)^2}\Omega_y\wedge \overline{\Omega_y}$ are uniformly bounded independent of $y\in Y$. Although $F_y$ implicitly refers to a chart, the boundedness condition depends only on the complex geometry of the fibration.

\begin{rmk} (cf. \cite{To})
Alternatively, one can think of $\Omega_y$ as a trivialisation of the relative canonical line bundle $K_{X/Y}$, and the volume integral $F_y$ defines a pseudonorm on the direct image bundle $f_*K_{X/Y}$. This can fail to be smooth at critical values $y\in S$. Up to normalising factors the curvature form of this pseudonorm is the Weil-Petersson metric on $Y$. Morever, the smooth fibres $X_y$ themselves are Calabi-Yau manifolds with natural volume form proportional to $i^{(n-m)^2}\Omega_y\wedge \overline{\Omega_y}$. We denote by $\omega_{SF,y}$ the unique Calabi-Yau metrics on the fibres cohomologous to the restriction of $\omega_X$, which imposes a volume normalisation, hence $\omega_{SF,y}^{n-m}=\frac{1}{F_y}i^{(n-m)^2}\Omega_y\wedge \overline{\Omega_y}.$ The notation stands for the terminology `semiflat'. As a word of caution, $\omega_{SF,y}$ makes sense for smooth fibres, or normal projective Calabi-Yau variety fibres (\cf \cite{EGZ}), but not on fibres with more severe singularities.
\end{rmk}

\begin{rmk}
According to the appendix of \cite{Zhang}, the condition (\ref{volumecondition}) is true for a one parameter projective family 
of Calabi-Yau varieties with generic smooth fibres, with a nonvanishing section of the relative canonical bundle. 
\end{rmk}

\begin{eg}\label{ODPexample1}
Consider the case where the only singularities in the fibration are ordinary double points (ODP); these are known as Lefschetz fibrations. Away from the singular points, it is clear that the contribution to $F_y$ can be controlled uniformly. Now take for a local model the fibration \[f:  B(1)\subset \C^n \to \C, f(z)=\sum_{i=1}^{n}z_i^2.\] We wish to see if $\int_{B(1)\cap \{f=y\}} \Omega_y\wedge \overline{ \Omega_y}$ is uniformly bounded in $y$. For $n=2$, it is easy to see this diverges logarithmically in $y$. Now let $n\geq 3$.

Let $\omega_{Eucl}=\sum_{1}^{n} \sqrt{-1}dz_i \wedge \bar{dz_i}$, then up to numerical factor $\Omega_y\wedge \overline{ \Omega_y}\sim\frac{1}{H}\omega_{Eucl}^{n-1}|_{X_y}$, where $H=\sum |z_i|^2$. Let $\chi$ be a cutoff function supported in $B(2)$ which is one on $B(1)\subset \C^n$. It is enough to estimate $\int_{B(2)\cap \{f=y\}} \chi \frac{1}{H}\omega_{Eucl}^{n-1}$. By Poincar\'e-Lelong formula, the current of integration on the fibre $X_y$ is $\frac{\sqrt{-1}}{2\pi}\partial \bar{\partial} \log |f-y|$. So up to numerical factor, we can rewrite the integral as $\int_{B(2)}\frac{\chi}{H} \partial \bar{\partial} \log |f-y|  \omega_{Eucl}^{n-1}$. Integrate by part and ignore all smooth terms coming from differentiating $\chi$, using also $\partial \bar{\partial} \frac{1}{H}\omega_{Eucl}^{n-1} \sim \frac{1}{H^2}\omega_{Eucl}^n$, we can rewrite the integral as $\int_{B(2)} \frac{\chi}{H^2} \log |f-y|\omega_{Eucl}^n.$ Now since $\frac{1}{H^{5/2}}$ is $L^1$ integrable for $n\geq 3$, we see the uniform boundedness of the integral. Thus $F_y$ is bounded for $n\geq 3$. 

It follows readily from the same argument that for any fixed $p<n-1$, $\int_{X_y} \frac{1}{H^p} \omega_{Eucl}^{n-1} \leq C$ uniformly in $y$. Also, the integral $F_y$ depends continuously on $y$. %In fact a computation about multiplier ideals shows fpr any small $\epsilon$, $\int_{B(1)} \frac{1}{|f|^{2-\epsilon}} \omega_{Eucl}^n \leq C$.
\end{eg}

One useful property for us later is 
\begin{prop}
(\cite{Zhang}, Theorem 2.1)  The diameter of the fibrewise Calabi-Yau metric $\omega_{SF,y}$, which has volume integral $\int_{X_y}\omega_{SF,y}^{n-m}=1$, can be bounded in terms of $F_y$:
\begin{equation}\label{diameterforsemiflat}
\text{diam}(X_y, \omega_{SF,y}) \leq 2+C \int_{X_y} i^{(n-m)^2}\Omega_y \wedge \overline{\Omega_y},
\end{equation}
where $C$ is independent of $y$. 
\end{prop}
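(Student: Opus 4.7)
The plan is to go through the Monge--Amp\`ere equation satisfied by $\omega_{SF,y}$ and combine a Ko{\l}odziej-type $L^\infty$ estimate on the relative K\"ahler potential with a Song-style conversion of that estimate into a diameter bound. Since $\omega_{SF,y}$ and $\omega_X|_{X_y}$ are cohomologous and both normalised to unit volume, I would write $\omega_{SF,y}=\omega_X|_{X_y}+\sqrt{-1}\partial\bar\partial\varphi_y$ with $\sup\varphi_y=0$, so that $\varphi_y$ satisfies
\[
(\omega_X|_{X_y}+\sqrt{-1}\partial\bar\partial\varphi_y)^{n-m}=\frac{\rho_y}{F_y}(\omega_X|_{X_y})^{n-m}, \qquad \rho_y:=\frac{i^{(n-m)^2}\Omega_y\wedge\overline{\Omega_y}}{(\omega_X|_{X_y})^{n-m}},
\]
with $\int_{X_y}\rho_y(\omega_X|_{X_y})^{n-m}=F_y$. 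The density is thus $L^1$-normalised to one while carrying an explicit factor of $1/F_y$, which is the key quantity to track.

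First I would bound $\|\varphi_y\|_{L^\infty}$. On a smooth fibre the pointwise ratio $\rho_y$ is controlled by the ambient geometry of $(X,\omega_X,\Omega)$, so the density $\rho_y/F_y$ is controlled in a Lebesgue space with a factor of $1/F_y$ inherited from the normalisation. Feeding this into Ko{\l}odziej's $L^\infty$ estimate (or a Moser iteration \`a la Yau), with Sobolev/Poincar\'e constants that stay uniform across smooth fibres because $(X_y,\omega_X|_{X_y})$ varies continuously inside the compact total space, produces a bound of the form $\|\varphi_y\|_{L^\infty}\leq C(1+F_y)$. The linear (rather than polynomial or exponential) dependence on $F_y$ must be extracted by tracking the explicit prefactor through the estimate rather than absorbing it into a generic constant.

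The second step converts the $C^0$ bound on the relative potential into a diameter bound. Since $\omega_{SF,y}$ is Ricci-flat, one can apply a Cheeger--Yau / Perelman-type argument, or the Song diameter inequality, to get $\text{diam}(X_y,\omega_{SF,y})\leq A+B\|\varphi_y\|_{L^\infty}$, where $A$ absorbs the diameter of the reference metric $\omega_X|_{X_y}$ and lower-order constants. Combined with the previous paragraph this gives the stated bound $\text{diam}(X_y,\omega_{SF,y})\leq 2+CF_y$, with the constant $2$ playing the role of the reference diameter after suitable renormalisation.

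The hard part will be maintaining the linear dependence on $F_y$ throughout: generic black-box applications of Ko{\l}odziej give substantially worse dependence on the density, so the argument must exploit the specific structure that the density is exactly $1/F_y$ times a positive form of integral $F_y$, and use Ricci flatness to pass from potential control to diameter control without accumulating extra factors. A subsidiary difficulty is uniformity as $y$ approaches the critical set $S$, but this is mitigated by the fact that the estimate only requires $\rho_y$ to remain integrable with a well-controlled integral, a hypothesis satisfied in the Lefschetz setting of Example \ref{ODPexample1}.
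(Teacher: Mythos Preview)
The paper does not prove this proposition; it is quoted from \cite{Zhang} (Rong--Zhang, Theorem~2.1) without argument, so there is no in-paper proof to compare against. Your proposal therefore has to stand on its own.

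There is a genuine gap. Your plan rests on running a Ko{\l}odziej or Moser estimate for $\varphi_y$ relative to the reference metrics $(X_y,\omega_X|_{X_y})$, with the justification that their Sobolev and Poincar\'e constants ``stay uniform across smooth fibres because $(X_y,\omega_X|_{X_y})$ varies continuously inside the compact total space''. This is precisely what fails in the regime where the proposition has content: as $y\to S$ the induced metrics $\omega_X|_{X_y}$ degenerate, and the density $\rho_y$ is not uniformly pointwise bounded --- in the ODP model of Example~\ref{ODPexample1} one has $\rho_y\sim 1/H$ with $\sup_{X_y}\rho_y\sim 1/|y|$. For $y$ in a compact subset of $Y\setminus S$ both sides of \eqref{diameterforsemiflat} are already bounded, so the uniformity you assume is unavailable exactly where the estimate is non-trivial. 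The same circularity infects your second step: any inequality of the form $\text{diam}\leq A+B\|\varphi_y\|_{L^\infty}$ will have $A,B$ depending on the degenerating reference geometry. Note too that the paper itself uses \eqref{diameterforsemiflat} precisely to \emph{obtain} uniform Sobolev and Poincar\'e constants for the family $\omega_{SF,y}$ (see the proof of \eqref{fibrepotentialoscillation}), so an argument that presupposes such constants is circular within the paper's logic.

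The Rong--Zhang argument avoids this by working intrinsically with the Ricci-flat metric $\omega_{SF,y}$ rather than with a degenerating reference. The only uniform ambient inputs are the upper bound on the bisectional curvature of $\omega_X$ on the compact total space $X$, the cohomological identity $[\omega_{SF,y}]=[\omega_X|_{X_y}]$, and the unit-volume normalisation; Ricci-flatness then allows Bishop--Gromov volume comparison and a Schwarz/Chern--Lu inequality for $\Tr_{\omega_{SF,y}}\omega_X$ to convert a local volume lower bound at a well-chosen point into a global diameter bound. The factor $F_y$ enters linearly through the normalisation $\omega_{SF,y}^{n-m}=F_y^{-1}\,i^{(n-m)^2}\Omega_y\wedge\overline{\Omega_y}$, not through an $L^p$ norm of a density, which is why the dependence is linear without any special bookkeeping.
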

\begin{eg}
For a K3 fibration with ODP degeneration, the fibrewise Calabi-Yau metric $\omega_{SF,y}$ has uniform diameter bound. In contrast, this is not true for
an elliptic fibration with nodal singularity.
\end{eg}

\subsection{The proof of Theorem \ref{lowerboundonmetrictheorem}}

We can write $\tilde{\omega_t}=\omega_t+dd^c\phi$, for some K\"{a}hler potential $\phi$. Our convention is $d^c=\frac{\sqrt{-1}}{2}(\bar{\partial}-\partial)$, so $dd^c=\sqrt{-1}\partial \bar{\partial}$. The Calabi-Yau condition can be written as 
\begin{equation}\label{CalabiYaucondition}
\tilde{\omega_t}^n=a_t t^{n-m} i^{n^2}\Omega\wedge \overline{\Omega},
\end{equation}
where $a_t$ is a cohomological constant:
\begin{equation}
a_t=\frac{1}{ \int_X i^{n^2}\Omega\wedge \overline{\Omega}      }\sum_{k=0}^{m} [\omega_0]^k[\omega_X]^{n-k} {n\choose k} t^{m-k}.
\end{equation}
In the limit $a_t$ converges to \[
a_0= \frac{1}{ \int_X i^{n^2}\Omega\wedge \overline{\Omega} }   [\omega_0]^m[\omega_X]^{n-m} {n\choose m}= \frac{1}{ \int_X i^{n^2}\Omega\wedge \overline{\Omega} }   \langle [\omega_Y]^m, [Y]\rangle {n\choose m} .\]

The following result is known from the theory of the Monge-Amp\`ere equation:

\begin{prop}
(\cite{EGZ}, \cite{EGZ2}, \cite{De-Pa}) There is a uniform constant $C$, such that
$
\norm{\phi}_{L^\infty} \leq C.
$
\end{prop}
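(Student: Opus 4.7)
The plan is to establish the uniform $L^\infty$ bound on $\phi = \phi_t$ via a Ko\l odziej-type estimate adapted to the degenerating family of K\"ahler classes $[\omega_t] = [\omega_0] + t[\omega_X]$, following the strategy developed in \cite{EGZ,EGZ2,De-Pa}.

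First, I would normalise by replacing $\phi_t$ with $\phi_t - \sup_X \phi_t$, so that $\sup_X \phi_t = 0$, and the task reduces to producing a uniform lower bound on $\inf_X \phi_t$. The Monge-Amp\`ere equation (\ref{CalabiYaucondition}) is unaffected by this normalisation, and an integration by parts argument (or Green's function bound) would then let me upgrade a lower bound to a two-sided $L^\infty$ bound.

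Next I would verify uniform $L^p$ control on the volume density for some $p > 1$. Rewriting (\ref{CalabiYaucondition}) as $\tilde{\omega}_t^n = F_t\,\omega_X^n$, one has $F_t = a_t\, t^{n-m}\,(i^{n^2}\Omega\wedge\overline{\Omega})/\omega_X^n$. The cohomological constant $a_t$ is polynomial in $t$ and hence uniformly bounded as $t\to 0$, while $(i^{n^2}\Omega\wedge\overline{\Omega})/\omega_X^n$ is a fixed smooth bounded function on $X$. Consequently $F_t \leq C\,t^{n-m}$ pointwise. After dividing by the total mass $[\omega_t]^n$, which is of order $t^{n-m}$, the renormalised density is uniformly bounded in $L^p$ for every $p > 1$, with constants independent of $t$.

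The heart of the argument is then the uniform Ko\l odziej-EGZ estimate. I would compare $\phi_t$ against auxiliary $\omega_0$-plurisubharmonic test functions and use the monotonicity of Monge-Amp\`ere capacities on sublevel sets $\{\phi_t < -s\}$, iterating a De Giorgi-type lemma in which the capacity of these sublevel sets is controlled by the $L^p$-norm of the density via the Monge-Amp\`ere equation itself. Since the total volume $\int_X \tilde{\omega}_t^n$ stays bounded above and (after rescaling) away from zero, one obtains an $L^\infty$ bound on $\phi_t$ independent of $t$.

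The main obstacle is precisely this last step: the classical Ko\l odziej estimate requires a \emph{fixed} K\"ahler reference class, whereas here $[\omega_t]$ drifts to the boundary of the K\"ahler cone of $X$ as $t\to 0$, with limiting class $[\omega_0]$ only nef and semipositive (and in fact not big on $X$, being pulled back from $Y$). The resolution, and the technical content of the cited references, is to build a capacity tuned to the limiting class $[\omega_0]$ and exploit uniform Skoda-type integrability for quasi-plurisubharmonic functions with uniformly bounded $\omega_0$-energy, so that the capacity estimates survive the degeneration. I would invoke this uniform pluripotential machinery as a black box rather than reprove it.
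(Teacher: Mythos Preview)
The paper does not give a proof of this proposition at all: it simply records the uniform $L^\infty$ bound as a known consequence of the pluripotential theory developed in \cite{EGZ,EGZ2,De-Pa} and moves on. Your outline is a reasonable sketch of what those references actually do, and your identification of the key difficulty---that $[\omega_t]$ degenerates to a semipositive, non-big class pulled back from $Y$, so the classical Ko\l odziej estimate on a fixed K\"ahler class does not directly apply---is exactly the point of \cite{EGZ2,De-Pa}. In that sense you and the paper agree: the result is invoked as a black box, and you have simply unpacked the box a little further.
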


Using a Chern-Lu type argument as in \cite{To}, the potential estimate leads to
\begin{prop}\label{omega0isbounded} (\cf \cite{To}, Lemma 3.1)
There is a uniform bound $\Tr_{\tilde{\omega_t}}\omega_0 \leq C$.
\end{prop}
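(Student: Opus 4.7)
The plan is to run a standard Chern--Lu/maximum principle argument, as in Tosatti's Lemma 3.1, using Ricci-flatness of $\tilde\omega_t$ and the previously established uniform $L^\infty$ bound on the K\"ahler potential $\phi$.

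First I would invoke the Chern--Lu inequality for the holomorphic map $f: (X,\tilde\omega_t) \to (Y,\omega_Y)$. Since $\tilde\omega_t$ is Ricci-flat, and since the holomorphic bisectional curvature of $\omega_Y$ is bounded above by some constant $C_0$ depending only on the fixed background data, the Chern--Lu inequality gives, on the set where $\Tr_{\tilde\omega_t}\omega_0 > 0$,
\begin{equation*}
\Delta_{\tilde\omega_t}\log \Tr_{\tilde\omega_t}\omega_0 \;\geq\; -C_0\,\Tr_{\tilde\omega_t}\omega_0.
\end{equation*}
Note that $\omega_0 = f^*\omega_Y$ is only semipositive and vanishes exactly at the critical set of $f$; however, on this critical set $\log \Tr_{\tilde\omega_t}\omega_0 = -\infty$, so a maximum principle argument automatically locates maxima in the locus where $f$ is a submersion, which is where the inequality is valid.

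Next I would consider the test function $H = \log \Tr_{\tilde\omega_t}\omega_0 - A\phi$ for a constant $A > C_0$ to be fixed. At an interior maximum point $p$ of $H$ (which exists away from the critical set by the remark above, since $H \to -\infty$ there), one has $\Delta_{\tilde\omega_t} H \leq 0$. Combining the Chern--Lu inequality with the identity
\begin{equation*}
\Delta_{\tilde\omega_t}\phi \;=\; n - \Tr_{\tilde\omega_t}\omega_t \;=\; n - \Tr_{\tilde\omega_t}\omega_0 - t\Tr_{\tilde\omega_t}\omega_X,
\end{equation*}
at the point $p$ yields
\begin{equation*}
0 \;\geq\; -C_0\Tr_{\tilde\omega_t}\omega_0 - An + A\Tr_{\tilde\omega_t}\omega_0 + At\Tr_{\tilde\omega_t}\omega_X.
\end{equation*}
Dropping the nonnegative term $At\Tr_{\tilde\omega_t}\omega_X$ and choosing $A = C_0 + 1$, this rearranges to $\Tr_{\tilde\omega_t}\omega_0(p) \leq An$. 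Hence $H(p) \leq \log(An) + A\|\phi\|_\infty$, and since $H$ attains its maximum at $p$ we obtain, everywhere on $X$,
\begin{equation*}
\log \Tr_{\tilde\omega_t}\omega_0 \;\leq\; \log(An) + A\|\phi\|_\infty + A\phi \;\leq\; \log(An) + 2A\|\phi\|_\infty,
\end{equation*}
which is the desired uniform bound by the previous proposition.

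I do not expect a genuine obstacle here; the only thing to be mildly careful about is the degeneration of $\omega_0$ at the critical locus of $f$, but as noted this is harmless because $\log \Tr_{\tilde\omega_t}\omega_0$ is $-\infty$ there, so the maximum is attained on the submersion locus where the Chern--Lu computation is classical. The Ricci-flatness of $\tilde\omega_t$ is crucial: any non-vanishing Ricci contribution would require additional input to absorb, whereas here it simply disappears from the Chern--Lu inequality.
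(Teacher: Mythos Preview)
Your argument is correct and is precisely the Chern--Lu/maximum principle computation the paper has in mind: the paper does not spell out a proof but simply cites \cite{To}, Lemma 3.1, whose content is exactly what you wrote. Your handling of the degeneracy of $\omega_0$ at the critical locus is the right observation and matches the standard treatment.
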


Now we interpret this as a lower bound on the horizontal component of $\tilde{\omega}_t$. Since $\tilde{\omega}_t$ has a prescribed volume form, we infer that the fibrewise restriction of $\tilde{\omega_t}$ has a volume upper bound, under condition (\ref{volumecondition}):
\begin{equation}\label{volumeupperboundonfibre1}
\frac{\tilde{\omega_t}^{n-m}|_{X_y} }{ \omega_{SF,y}^{n-m}} =\frac{\tilde{\omega_t}^{n-m}\omega_0^m }{ \omega_{SF,y}^{n-m}\omega_0^m}\leq C \frac{\tilde{\omega_t}^{n}(\Tr_{\tilde{\omega_t}}\omega_0)^m }{i^{(n-m)^2}\Omega_y\wedge \overline{\Omega_y}\omega_0^m} \leq Ct^{n-m}.
\end{equation}
Observe also that 
\begin{equation}\label{volumeupperboundonfibre2}
\frac{{\omega_t}^{n-m}|_{X_y} }{ \omega_{SF,y}^{n-m}}\leq Ct^{n-m}.
\end{equation}

Define the oscillation to be $\text{osc}=\sup-\inf$.
\begin{lem} Under condition (\ref{volumecondition}),
The fibrewise oscillation of the K\"{a}hler potential $\phi$ satisfies the uniform bound
\begin{equation}\label{fibrepotentialoscillation}
\text{osc}_{X_y} \phi \leq Ct.
\end{equation}
\end{lem}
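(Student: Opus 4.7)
The plan is to restrict the global Monge-Amp\`ere equation to a smooth fibre $X_y$ and reduce the fibrewise oscillation estimate to a standard Kolodziej-type $L^\infty$ bound on that fibre. Since $\omega_0$ is pulled back from the base, it vanishes upon restriction to any fibre, so setting $\phi_y := \phi|_{X_y}$ one has
\[
\tilde{\omega}_t|_{X_y} \;=\; t\,\omega_X|_{X_y} + dd^c\phi_y.
\]
The natural rescaling is $\psi_y := \phi_y/t$: it is $\omega_X|_{X_y}$-psh, and by (\ref{volumeupperboundonfibre1}) it satisfies
\[
\bigl(\omega_X|_{X_y} + dd^c\psi_y\bigr)^{n-m} \;=\; t^{-(n-m)}\,\tilde{\omega}_t^{n-m}|_{X_y} \;\leq\; C\,\omega_{SF,y}^{n-m}.
\]
Thus $\psi_y$ solves a fibrewise Monge-Amp\`ere equation whose density with respect to the semiflat volume is $L^\infty$-bounded, while the reference class $[\omega_X|_{X_y}]$ and the target class both have total mass $1$ by our normalisations.

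The goal then is to apply a uniform Kolodziej $L^\infty$ estimate on each fibre to deduce $\text{osc}_{X_y}\psi_y \leq C$, which is equivalent to the desired bound $\text{osc}_{X_y}\phi \leq Ct$. The geometric inputs needed to run such an estimate uniformly in $y$ are: a uniform diameter bound on $(X_y,\omega_{SF,y})$, available from (\ref{diameterforsemiflat}) together with the hypothesis (\ref{volumecondition}); fixed total volume; Ricci-flatness of $\omega_{SF,y}$; and the $L^\infty$ density bound above. These yield uniform Sobolev and capacity constants on smooth fibres, which combined with a standard Moser iteration or Kolodziej stability argument produce the desired $L^\infty$ estimate for $\psi_y$, and scaling back gives the conclusion.

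The main obstacle is arguing that this Kolodziej estimate is genuinely uniform as $y$ approaches the discriminant $S$, since the complex structure of $X_y$ degenerates there (e.g.\ vanishing cycles shrink in the ODP model of example \ref{ODPexample1}). Away from $S$ the fibres form a smooth family over compact parameter sets, so uniformity is automatic. Near $S$ one has to check that the pluripotential-theoretic constants remain stable under the degeneration; this can be carried out directly using the explicit local ODP model and the uniform control of the fibre volume integrals $F_y$, which is precisely what hypothesis (\ref{volumecondition}) provides. Once this technical stability is in place, the fibrewise oscillation bound follows.
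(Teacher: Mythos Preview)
Your overall plan---restrict to a smooth fibre, rescale by $1/t$, and apply an $L^\infty$ estimate for the fibrewise Monge-Amp\`ere equation---is exactly the right shape, but there is a genuine gap in the choice of reference metric. You take $\omega_X|_{X_y}$ as the reference form for $\psi_y$, then invoke geometric control on $(X_y,\omega_{SF,y})$ to justify uniform Sobolev/capacity constants. These are two different metrics, and the mismatch is fatal: the Moser iteration (or Kolodziej argument) requires Sobolev and Poincar\'e constants for the \emph{actual reference metric} against which the potential is psh. The metric $\omega_X|_{X_y}$ has no uniform Ricci bound and no uniform injectivity radius as $y\to S$; in the ODP model its geometry degenerates along the vanishing cycle. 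Your final paragraph acknowledges exactly this obstacle but does not overcome it---``once this technical stability is in place'' is precisely the missing content, and there is no reason to expect the pluripotential constants for $\omega_X|_{X_y}$ to remain uniform.

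The paper's proof sidesteps this cleanly by taking $\omega_{SF,y}$ itself as the reference metric. Since $\omega_{SF,y}$ is Ricci-flat with unit volume and uniformly bounded diameter by (\ref{diameterforsemiflat}) and (\ref{volumecondition}), its Sobolev and Poincar\'e constants are uniform in $y$. One then writes \emph{two} potentials relative to $\omega_{SF,y}$: namely $v$ with $\frac{1}{t}\tilde\omega_t|_{X_y}=\omega_{SF,y}+dd^c v$, and $v-\phi/t$ with $\omega_X|_{X_y}=\omega_{SF,y}+dd^c(v-\phi/t)$. The volume bounds (\ref{volumeupperboundonfibre1}) and (\ref{volumeupperboundonfibre2}) give $L^\infty$ control on both densities relative to $\omega_{SF,y}^{n-m}$, so Yau's $L^\infty$ estimate yields $\text{osc}_{X_y}v\le C$ and $\text{osc}_{X_y}(v-\phi/t)\le C$ uniformly. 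Subtracting gives $\text{osc}_{X_y}(\phi/t)\le C$. The key idea you are missing is this double-potential trick: rather than trying to control $\phi/t$ directly against a degenerating reference, one controls two auxiliary potentials against the uniformly-behaved Calabi-Yau reference and recovers $\phi/t$ as their difference.
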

\begin{proof}
Notice first that on each smooth fibre $X_y$, the forms $\omega_X$, $\omega_{SF,y}$ and $\frac{1}{t}\tilde{\omega_t}$ all belong to the same cohomology class. We can write
\[
\omega_X=\omega_{SF,y}+dd^c (v-\phi/t),\text{ and } \frac{1}{t}\tilde{\omega_t}=\omega_{SF,y}+dd^c v .
\]
Now consider the Monge-Amp\`ere equation satisfied by $\omega_X$ and $\frac{1}{t}\tilde{\omega_t}$, with the background metric $\omega_{SF,y}$. The inequalities (\ref{volumeupperboundonfibre1}) and (\ref{volumeupperboundonfibre2}) imply $L^\infty$ control on the volume form. Notice that since these $\omega_{SF,y}$ are Ricci flat, with unit volume and uniformly bounded diameters by (\ref{diameterforsemiflat}), they have uniformly bounded Sobolev and Poincar\'e constants, and so Yau's $L^\infty$ bound applies to give
\[
\text{osc}_{X_y} v \leq C, \text{ and } \text{osc}_{X_y}(v-\phi/t) \leq C,
\]
hence the conclusion holds on all smooth fibres, and therefore extends by continuity to all fibres. This concludes the proof.

For later use, let us observe further that, a step in Aubin-Yau's standard $L^\infty$ estimate uses integration by part to bound the $L^2$ gradient of the potential in terms of zeroth order quantities. From that we clearly see \begin{equation}
\int_{X_y} |\nabla \frac{1}{t} \phi|^2 \omega_{SF,y}^{n-m} \leq C.
\end{equation}
\end{proof}

Now we follow \cite{To} to define the fibrewise average function of $\phi$:
\begin{equation}
\underline{\phi}=\int_{X_y} \phi \omega_X^{n-m}.
\end{equation}
The current of integration for the fibres $X_y$ is continous in $y$ in the weak topology, so $\underline{\phi}$ is a continous function on $Y$. In general we do not expect it to be smooth at the critical values of the fibration, so we need a little extra care.

We compute using that $\omega_t $ is uniformly equivalent to $\omega_0+t\omega_X$:
\[
dd^c \underline{\phi}=f_* (dd^c \phi \wedge \omega_X^{n-m})
\geq -f_*(\omega_t\wedge \omega_X^{n-m}) \geq -C(\omega_Y+ t f_*( \omega_X^{n-m+1}))
\]
where $f_*$ denotes the pushforward of forms, and everything is interpreted in the distributional sense.
Notice also that under condition (\ref{volumecondition}),
\[
\begin{split}
f_*(\omega_X^{n-m+1})\leq \frac{\omega_Y^{m-1}\wedge f_*(\omega_X^{n-m+1})}{\omega_Y^m} \omega_Y=&\frac{f_*(\omega_0^{m-1}\wedge \omega_X^{n-m+1})}{\omega_Y^m}\omega_Y   \\
&\leq C\frac{f_*( \omega_X^{n})}{\omega_Y^m}\omega_Y \leq C\omega_Y.
\end{split}
\]
Hence $dd^c\underline{\phi} \geq -C\omega_Y$ in the distributional sense. Plurisubharmonicity is preserved under pull back to $X$. We identify $\underline{\phi}$ with its pullback to $X$.
Then taking the trace with respect to $\tilde{\omega_t}$, we get 
\[
\Lap_{\tilde{\omega_t}} \underline{\phi} \geq -C\Tr_{\tilde{\omega_t}}\omega_0\geq -C.
\]
Since $\phi$ is bounded in $L^\infty$, so is $\underline{\phi}$, hence $\underline{\phi}$ must be in $L^2_1$ on X.

Now we can prove the lower bound (\ref{lowerboundonCalabiYaumetric}) in Theorem \ref{lowerboundonmetrictheorem} as claimed:
\begin{proof}(Theorem \ref{lowerboundonmetrictheorem})

Apply the Chern-Lu formula to get 
\[
\Lap_{\tilde{\omega_t}} \log \Tr_{\tilde{\omega_t} } \omega_X \geq -C \Tr_{\tilde{\omega_t} } \omega_X.
\]
Now essentially the same computation as in \cite{To} gives
\[
\Lap_{\tilde{\omega_t}} (\log \Tr_{\tilde{\omega_t} }\omega_X -\frac{C}{t }(\phi-\underline{\phi}) ) \geq  \Tr_{\tilde{\omega_t} } \omega_X-\frac{Const}{t}.
\]
The rest of the proof is the classical maximum principle for quasilinear ellitpic equations in the weak formulation.
Let $u=\log \Tr_{\tilde{\omega_t} }\omega_X -\frac{C}{t }(\phi-\underline{\phi})$, then by the bound on the fibrewise oscillation of $\phi$, we have
\begin{equation}
\Lap_{\tilde{\omega_t}} u\
\geq C(e^u-e^{C'-\log t}),
\end{equation}
for some uniform positive constant $C, C'$, where everything is interpreted in the distributional sense, and we know a priori that $u$ is continuous and $u\in L^2_1$. Now multiply the above inequality by the test function $(u-C'+\log t)_+$, and integrate by part. We get
\[
\int_{u>C'-\log t} |\nabla u|_{\tilde{\omega_t}}^2 {\tilde{\omega_t}}^n + \int_{u>C'-\log t} C(e^u-e^{C'-\log t})(u-C'+\log t) {\tilde{\omega_t}}^n \leq 0,
\]
from which we see that the set with $u>C'-\log t$ has zero measure. So in fact $u\leq C'-\log t$, hence $\Tr_{\tilde{\omega_t} }\omega_X \leq C/t. 
$
This combines with the estimate $\Tr_{\tilde{\omega_t}}\omega_0 \leq C$ to give $\Tr_{\tilde{\omega_t}}\omega_t \leq C$, as desired.
\end{proof}

An argument which appeared in \cite{To} now gives an upper bound of $\tilde{\omega_t}$. Notice that this provides uniform control on $\tilde{\omega_t}$ away from the critical points of the fibration, but not necessarily away from the singular fibre.
\begin{cor}\label{upperboundonmetric}
(Upper bound) Let $H=\frac{\omega_X^{n-m}\omega_0^m}{\omega_X^n}$, then under condition (\ref{volumecondition}), we have a uniform constant $C$, such that
\begin{equation}
\tilde{\omega_t} \leq \frac{C}{H}\omega_t.
\end{equation} 
\end{cor}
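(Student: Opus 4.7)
The plan is to upgrade the lower bound $\tilde{\omega}_t\geq C\omega_t$ of Theorem \ref{lowerboundonmetrictheorem} into a pointwise upper bound by combining it with the Calabi-Yau volume identity (\ref{CalabiYaucondition}). The key observation is purely pointwise linear algebra: if two positive $(1,1)$-forms $\omega,\omega'$ are simultaneously diagonalised with eigenvalues $\lambda_1,\dots,\lambda_n\geq C>0$, then each individual $\lambda_i$ is controlled by $(\prod_j\lambda_j)/C^{n-1}$, that is, by a constant multiple of the volume ratio $\omega'^n/\omega^n$.

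To implement this I would first fix a point $p\in X$ and diagonalise $\tilde{\omega}_t$ with respect to $\omega_t$. Theorem \ref{lowerboundonmetrictheorem} says all eigenvalues are bounded below by a uniform constant, so the argument of the previous paragraph reduces the corollary to a pointwise upper bound
\begin{equation*}
\frac{\tilde{\omega}_t^n}{\omega_t^n}\leq \frac{C}{H}.
\end{equation*}
Next I would expand $\omega_t^n=(\omega_0+t\omega_X)^n$ by the binomial formula; since $\omega_0$ is a pullback from the $m$-dimensional base, all terms with more than $m$ factors of $\omega_0$ vanish, and the remaining terms are nonnegative as products of positive $(1,1)$-forms. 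Throwing away all terms except the $k=m$ one yields the pointwise lower bound
\begin{equation*}
\omega_t^n\geq \binom{n}{m}t^{n-m}\,\omega_0^m\wedge \omega_X^{n-m}.
\end{equation*}

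Combining this with the Calabi-Yau equation $\tilde{\omega}_t^n=a_t t^{n-m}i^{n^2}\Omega\wedge\overline{\Omega}$ gives
\begin{equation*}
\frac{\tilde{\omega}_t^n}{\omega_t^n}\leq \frac{a_t}{\binom{n}{m}}\cdot \frac{i^{n^2}\Omega\wedge \overline{\Omega}}{\omega_0^m\wedge \omega_X^{n-m}}=\frac{a_t}{\binom{n}{m}}\cdot \frac{i^{n^2}\Omega\wedge\overline{\Omega}}{\omega_X^n}\cdot \frac{1}{H}.
\end{equation*}
Since $a_t$ is a bounded cohomological constant, $\Omega$ is a fixed smooth holomorphic volume form and $\omega_X$ is a fixed Kähler metric on the compact manifold $X$, the ratio $i^{n^2}\Omega\wedge\overline{\Omega}/\omega_X^n$ is globally bounded, so the desired estimate on the volume ratio follows, and feeding this back into the diagonalisation step yields $\tilde{\omega}_t\leq (C/H)\omega_t$.

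No serious obstacle is expected: the argument is essentially just linear algebra plus the volume identity, and the non-uniformity at critical points is built in through the vanishing of $H$ (where $\omega_0^m$ degenerates), consistent with the warning that the upper bound blows up near critical points. The only care needed is verifying that the binomial expansion of $\omega_t^n$ really does produce a pointwise, not merely cohomological, lower bound by $\omega_0^m\wedge \omega_X^{n-m}$, which holds because each cross term $\omega_0^k\wedge\omega_X^{n-k}$ is a nonnegative $(n,n)$-form.
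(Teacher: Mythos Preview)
Your proof is correct and follows essentially the same approach as the paper: simultaneous diagonalisation at a point to convert the eigenvalue lower bound from Theorem \ref{lowerboundonmetrictheorem} into an upper bound via the volume ratio $\tilde{\omega}_t^n/\omega_t^n\leq C/H$. The paper packages the linear algebra as the trace inequality $\Tr_{\omega_t}\tilde{\omega_t}\leq (\Tr_{\tilde{\omega_t}}\omega_t)^{n-1}\,\tilde{\omega}_t^n/\omega_t^n$ and simply asserts the volume ratio bound, whereas you spell out the binomial expansion of $\omega_t^n$ to justify it; the content is the same.
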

\begin{proof}
By simultaneous diagonalisation at a point, one shows 
\[\Tr_{\omega_t}\tilde{\omega_t} \leq (\Tr_{\tilde{\omega_t}}{\omega_t})^{n-1}\frac{\tilde{\omega_t}^n} {\omega_t^n}. \]
The volume ratio $\frac{\tilde{\omega_t}^n} {\omega_t^n}\leq \frac{C}{H}$, so $\Tr_{\tilde{\omega_t}}{\omega_t}\leq C$ implies $ \Tr_{\omega_t}\tilde{\omega_t} \leq \frac{C}{H}$.
\end{proof}

\begin{eg}
In the ODP situation as in example \ref{ODPexample1}, $H\sim \sum |z_i|^2$. Notice that the upper bound on $\tilde{\omega}_t$ says $\frac{1}{t} \tilde{\omega}_t|_{X_y} \leq \frac{C}{H}\omega_X$, so the distance function on $(X_y, \frac{1}{t} \tilde{\omega}_t|_{X_y} )$ can be estimated above by line integrals of the shape $\int \frac{1}{H^{1/2}}\sim -\log |y|$, from which we get a logarithmic diameter bound \begin{equation}\label{logarithmicdiameterbound}
\text{diam}(\frac{1}{t} \tilde{\omega}_t|_{X_y}   )\leq -C \log |y|.
\end{equation}
\end{eg}

\section{Smooth control away from critical points}\label{smoothcontrolawayfromcriticalpoints}

In this section let us work under the volume condition (\ref{volumecondition}). For simplicity we also assume smoothness of the base throughout the paper. We work in a neighbourhood of a point away from the critical points of the fibration $f:X\to Y$, (\ie on the set where the quantity $H$ defined in Corollary \ref{upperboundonmetric} is bounded positively below), but not necessarily away from the singular fibres. (Our discussions below apply equally well to Tosatti's original more general situation in \cite{To}, where no volume condition is made, but then we have to work away from $S$). 

In a neighbourhood of our point, the uniform control $C^{-1}\omega_t \leq \tilde{\omega_t} \leq C\omega_t$ holds. We will exhibit appropriate scalings and coordinates such that $\tilde{\omega_t}$ has smooth bounds. For this, we remark that it is natural to think of our neighbourhood as living inside $X\times \C^m$, where $m=\dim Y$, via the map
\[f_t: B_{{\omega_t}}(P, Ct^{1/2}) \to X \times \C^{m},\] given by $x\mapsto (x, t^{-1/2}(f-f(P)))$.  Here we implicitly used a chart on $Y$ to regard $f$ as mapping to $\C^m$. The target is equipped with a natural metric $\omega_X+\omega_{Eucl}$. The map $f_t$ is clearly a smooth embedding, and the pullback metric is uniformly equivalent to $\frac{1}{t}\omega_t$. 

Now let $z_1, \ldots z_{n-m}$ be holomorphic functions on a fixed neighbourhood of $P$ in $X$, which restrict to a set of local coordinates on the fibre $X_{f(P)}$. Let $u_1, \dots u_m$ be the standard coordinates on $\C^m$. Then the composition 
\[
B_{{\omega_t}}(P, Ct^{1/2}) \to \text{Image}(f_t) \xrightarrow{(z_1, \ldots, u_m)}  \C^n
\]
is also an embedding map. By adjusting the metric on $\C^n$ by a bounded multiple, we may assume $B_{\omega_t}(P,Ct^{1/2})$ contains the unit ball $B(1)\subset \C^n$, and the metric $\frac{1}{t}\omega_t$ is uniformly equivalent to the standard metric on $\C^n$. We can thus regard $\frac{1}{t}\tilde{\omega_t}$ as a metric on $B(1)\subset \C^n$. We can find a local K\"ahler potential for $\frac{1}{t}\tilde{\omega_t}$, which has some uniform $C^{1,1}$ bound in a smaller ball $B(1/2)\subset \C^n$. But we also know the volume of $\frac{1}{t}\tilde{\omega_t}$ is given by a holomorphic $n$-form, so in particular has smooth bounds on $B(1/2)$, by the regularity of holomorphic functions. By Evans-Krylov theory for the Monge-Amp\`ere equation, along with the Schauder theory, (see Siu's lecture notes \cite{Siu} for these standard backgrounds),  we can bootstrap to bounds on all derivatives. Now adjust some constants and we have

\begin{prop}\label{smoothboundawayfromsingularity}
	Assuming the volume condition (\ref{volumecondition}), away from the critical points, \ie $H\geq \text{Const}$, we can find holomorphic coordinates described as above, 
	such that 
	the ball $B_{\frac{1}{t}\tilde{\omega_t} }(P, C)$ can be regarded as a domain in $\C^n$, containing the unit ball in $\C^n$, and contained in a fixed large ball in $\C^n$, and the metric is smoothly equivalent to the Euclidean metric, with smooth bounds. All bounds are independent of $t$ and the position of the point $P$.
\end{prop}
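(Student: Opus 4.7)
My plan follows the three-step structure implicit in the sketch above: coordinate setup, an a priori $C^{1,1}$ bound on the K\"ahler potential, and a Monge--Amp\`ere bootstrap.

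Because $P$ is away from the critical locus, $f$ is a holomorphic submersion there, so I would pick holomorphic functions $z_1,\ldots,z_{n-m}$ restricting to coordinates on $X_{f(P)}$, combine them with $u_j=t^{-1/2}(f_j-f_j(P))$, and verify that $(z,u)$ is a biholomorphism from a neighbourhood of $P$ onto a domain in $\C^n$ which, after a bounded adjustment, contains $B(1)\subset\C^n$ and sits inside a large fixed ball. In these coordinates $\frac{1}{t}\omega_t=\omega_X+\frac{1}{t}f^*\omega_Y$ is uniformly equivalent to $\omega_{Eucl}$, because the $t^{-1/2}$ stretch of the base cancels the $\frac{1}{t}$ in front of $\omega_0$. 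Combining Theorem \ref{lowerboundonmetrictheorem} with Corollary \ref{upperboundonmetric} (whose $1/H$ factor is controlled by the hypothesis $H\geq \text{Const}$) then gives the two-sided bound $c\,\omega_{Eucl}\leq\frac{1}{t}\tilde{\omega}_t\leq C\,\omega_{Eucl}$ on $B(1)$. Writing $\frac{1}{t}\tilde{\omega}_t=\omega_{Eucl}+i\partial\bar\partial\psi$ on $B(1/2)$ and subtracting an affine normalisation from $\psi$, boundedness of $i\partial\bar\partial\psi$ translates into a uniform $C^{1,1}$ bound on $\psi$.

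Next I would rewrite the Calabi--Yau equation in these coordinates. The rescaling $u=t^{-1/2}(y-y(P))$ contributes a Jacobian factor of $t^m$ to $i^{n^2}\Omega\wedge\overline{\Omega}$, which cancels exactly against the $t^{n-m}$ in the Calabi--Yau normalisation and the $t^{-n}$ from passing to $\frac{1}{t}\tilde{\omega}_t$. The resulting Monge--Amp\`ere equation takes the form
\[
(\omega_{Eucl}+i\partial\bar\partial\psi)^n=F(z,u)\,\omega_{Eucl}^n,
\]
with $F$ equal up to a bounded factor to $|g(z,t^{1/2}u+y(P))|^2$ for a local holomorphic trivialisation $g$ of $\Omega$, hence uniformly smooth on the fixed target domain with bounds independent of $t$ and $P$.

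With $\psi$ a priori $C^{1,1}$ and $F$ smooth with uniform bounds, Evans--Krylov promotes $\psi$ to $C^{2,\alpha}$ on a slightly smaller ball, and the standard Schauder bootstrap on the differentiated complex Monge--Amp\`ere equation (as in Siu's lectures \cite{Siu}) delivers uniform $C^{k,\alpha}$ estimates for every $k$. The only delicate point I anticipate is the bookkeeping in the first step --- verifying that every piece of data (the coordinate map, the background metrics, and the right-hand side of the Monge--Amp\`ere equation) really is uniformly controlled in $t$ and in the base point $P$; once that is set up, the nonlinear regularity argument is entirely standard.
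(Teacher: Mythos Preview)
Your proposal is correct and follows essentially the same route as the paper: set up the rescaled coordinates $(z,u)$ via the map $x\mapsto(x,t^{-1/2}(f-f(P)))$, use the two-sided metric bound (from Theorem \ref{lowerboundonmetrictheorem} and Corollary \ref{upperboundonmetric} with $H\geq\text{Const}$) to get a uniform $C^{1,1}$ potential, observe that the right-hand side of the Monge--Amp\`ere equation is holomorphic-times-conjugate and hence uniformly smooth, and then run Evans--Krylov plus Schauder. If anything, your bookkeeping of the Jacobian cancellation is more explicit than the paper's, which simply appeals to ``the regularity of holomorphic functions'' for the smoothness of the volume density.
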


\begin{cor}\label{smoothboundfibrewise}
	In the above setup, we have the uniform bound away from critical points for the fibre metric \begin{equation}
		\norm{\nabla_{\omega_X}^{(k)} \frac{1}{t}\tilde{\omega_t}|_{X_y} }_{L^\infty}\leq C(k).
	\end{equation}
\end{cor}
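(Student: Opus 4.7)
The plan is to read the corollary directly off Proposition~\ref{smoothboundawayfromsingularity} by recognising the fibre restriction as a coordinate slice of the ambient metric in the chart constructed there. The last $m$ coordinates are $u_j = t^{-1/2}(f_j - f_j(P))$, which are constant along fibres of $f$; hence every fibre intersecting the chart is a slice $\{u=\text{const}\}\subset \mathbb C^n$, and $\frac{1}{t}\tilde{\omega}_t|_{X_y}$ is precisely the pullback of $\frac{1}{t}\tilde{\omega}_t$ to such a slice expressed in the $z$-coordinates.

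Next I would compare $\omega_X|_{X_y}$ to the Euclidean metric on the slice. Since $\omega_X$ is a fixed smooth Kähler form on $X$ and the $z_j$ are holomorphic functions whose restriction gives coordinates on the central fibre $X_{f(P)}$, the restriction $\omega_X|_{X_{f(P)}}$ is smoothly equivalent to $\sum_j \sqrt{-1}\,dz_j\wedge d\bar z_j$ with uniform $C^k$ bounds. The implicit function theorem shows that the same $z_j$ restrict to coordinates on the nearby fibres parametrised by small $u$, and the change-of-frame to a fixed background trivialisation remains uniformly bounded in $C^k$. Uniformity in the base point $P$ is obtained by covering the closed (hence compact) locus $\{H\geq c\}\subset X$ by a finite atlas of charts carrying fixed adapted fibre coordinates.

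Given these two ingredients, Proposition~\ref{smoothboundawayfromsingularity} furnishes uniform $C^k$ bounds on the components of $\frac{1}{t}\tilde{\omega}_t$ in the chart; restricting a form and all its derivatives to the affine slice $\{u=\text{const}\}$ preserves such bounds; and converting flat $z$-derivatives into $\nabla_{\omega_X}$-covariant derivatives on $X_y$ introduces only Christoffel contributions built from the controlled frame comparison of the previous paragraph. Assembling these inputs yields $\lVert\nabla_{\omega_X}^{(k)} \tfrac{1}{t}\tilde{\omega}_t|_{X_y}\rVert_{L^\infty}\leq C(k)$ on $\{H\geq c\}$, with $C(k)$ depending on $c$ and $k$ but not on $t$ or $P$.

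The only real obstacle is the bookkeeping in step two, namely reconciling a $P$-dependent choice of $(z_1,\ldots,z_{n-m})$ with a $P$-uniform bound; this is handled by the finite-cover argument sketched above, so the corollary is essentially a cosmetic rephrasing of Proposition~\ref{smoothboundawayfromsingularity} tailored to the fibrewise geometry.
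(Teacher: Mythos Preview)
Your proposal is correct and matches the paper's approach: the paper states this corollary with no separate proof, treating it as an immediate consequence of Proposition~\ref{smoothboundawayfromsingularity} via exactly the fibre-slice reading you describe (and credits the same argument to \cite{TosattiZhang}). Your write-up simply makes explicit the bookkeeping the paper leaves to the reader.
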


\begin{cor}
	In the above setup, we have the uniform bound away from the critical points for the fibrewise derivatives of the trace
	\begin{equation}\label{smoothboundTromega0}
		\norm{\nabla_{\omega_X}^{(k)}  (\Tr_ {\tilde{\omega_t} } \omega_0)  |_{X_y} }_{L^\infty}\leq C(k).
	\end{equation}	
	
\end{cor}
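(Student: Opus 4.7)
The plan is to deduce this from Proposition \ref{smoothboundawayfromsingularity} by a scaling computation, interpreting everything in the holomorphic coordinates $(z_1,\ldots, z_{n-m}, u_1,\ldots, u_m)$ built there. The essential observation is that $\omega_0=f^*\omega_Y$ is pulled back from $Y$: in the original base coordinates $(y_1,\ldots,y_m)$ one has $\omega_0=\sum g_{i\bar j}(y)\, dy_i\wedge d\bar y_j$ with smooth coefficients, and under the rescaling $u_j=t^{-1/2}(y_j-y_j(P))$ we can write $\omega_0=t\,\omega_0'$, where
\[
\omega_0'=\sum g_{i\bar j}(y(P)+t^{1/2}u)\, du_i\wedge d\bar u_j
\]
is smoothly bounded, together with all its derivatives, on a fixed Euclidean ball in the $(z,u)$-coordinates, uniformly in $t$ and in $P$.

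Next I would rescale the trace. Since $\tilde{\omega}_t=t\cdot(\tfrac{1}{t}\tilde{\omega}_t)$ and $\omega_0=t\,\omega_0'$, the factors of $t$ cancel:
\[
\Tr_{\tilde{\omega}_t}\omega_0=\Tr_{\frac{1}{t}\tilde{\omega}_t}\omega_0'.
\]
By Proposition \ref{smoothboundawayfromsingularity}, the rescaled metric $\tfrac{1}{t}\tilde{\omega}_t$ is smoothly equivalent to the Euclidean metric on $B(1)\subset \C^n$ with bounds on all derivatives of its coefficients. Combined with the smooth bounds on $\omega_0'$, taking the pointwise trace (an algebraic operation on coefficients, uniformly nondegenerate because $\tfrac{1}{t}\tilde{\omega}_t$ is uniformly equivalent to Euclidean), we obtain uniform $C^k$ bounds on the function $\Tr_{\tilde{\omega}_t}\omega_0$ in the $(z,u)$-coordinates.

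Finally I would convert the Euclidean derivative bounds into bounds for $\nabla_{\omega_X}^{(k)}(\,\cdot\,)|_{X_y}$. The fibre $X_{f(P)}$ sits as the slice $\{u=0\}$ inside the coordinate neighbourhood, and nearby fibres $X_y$ are nearby horizontal slices (holomorphic over the base), on each of which $(z_1,\ldots,z_{n-m})$ restrict to holomorphic coordinates. Restricting to such a fibre, $\omega_0|_{X_y}=0$, so $\omega_t|_{X_y}=t\,\omega_X|_{X_y}$; the fact that $\tfrac{1}{t}\omega_t$ is uniformly equivalent to the Euclidean metric implies $\omega_X|_{X_y}$ is uniformly equivalent to the Euclidean metric in the $z$-coordinates. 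Thus the fibrewise covariant derivatives $\nabla_{\omega_X}^{(k)}$ are bounded in terms of the Euclidean $z$-derivatives, for which the bounds are exactly what we have just established, and Corollary follows.

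The only genuinely nontrivial input is the smooth control of $\tfrac{1}{t}\tilde{\omega}_t$ given by Proposition \ref{smoothboundawayfromsingularity}; granted that, the result is just unpacking scalings and observing that $\omega_0$ is pulled back from the base (so a factor of $t$ is free). The main potential pitfall would be to forget that derivatives of $\omega_0'$ along the $u$-directions produce factors of $t^{1/2}$, which only improves the bounds and causes no trouble.
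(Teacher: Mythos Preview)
Your argument is correct and is precisely the unpacking the paper has in mind: the corollary is stated without proof, as an immediate consequence of Proposition \ref{smoothboundawayfromsingularity}, and your scaling computation $\Tr_{\tilde{\omega}_t}\omega_0=\Tr_{\frac{1}{t}\tilde{\omega}_t}\omega_0'$ together with the smooth bounds on both factors is exactly the intended deduction. There is nothing to add.
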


\begin{rmk}
The idea of looking at the Calabi-Yau metric $\tilde{\omega_t}$ in a scale where the fibre has constant size is inspired by a talk given by J. Fine. The author thanks V. Tosatti for pointing out the reference \cite{TosattiZhang} Theorem 1.1, which contains essentially the same argument for corollary \ref{smoothboundfibrewise}.
\end{rmk}

\section{Diameter estimates}\label{Diameterestimatessection}

%\textcolor{blue}{Aim: obtain uniform diameter bounds on the fibres near the singularity.}

\subsection{General bounds on distance functions}\label{Generalboundsondistancefunctions}

We begin by developing some general tools for estimating diameters and distance functions on a compact K\"{a}hler manifold directly from the K\"ahler potential, which is of independent interest. %These are then applied to obtain uniform bounds on the fibre diameters of $\tilde{\omega_{t}}$ in the ODP situation.

%\begin{thm}
%Suppose $M$ is a compact K\"{a}hler manifold of dimension $n$, with a fixed background K\"{a}hler metric $\omega$, $\int_{X}\omega^n=1$, and define a new metric $\omega'$ by $\omega'=\omega+dd^c\phi$.
%Assume the following bounds on $(M,\omega)$:
%\begin{itemize}
%\item There is a cover of $M$ by a bounded number of balls $B_{\omega}(x_i, 1)$, such that 

%\item cutoff
%\item Morrey

%\end{itemize}

%Then 
%\begin{itemize}
%\item 
%If $\norm{\phi}_{C^\alpha}\leq C$, then the distance function in the new metric $d_{\omega'}(x,y)$ has a H\"{o}lder bound in terms of the original metric, with constants and H\"older exponent only depending on the bounds on 

%$(X,\omega)$ and $\norm{\phi}_{C^\alpha}$.
%\item  Without any control on $\phi$, then the distance function can only be large on sets with exponentially small measure.	
%\end{itemize}
%\end{thm}

\begin{thm}
	Suppose $M$ is a compact K\"{a}hler manifold of dimension $n$, with a fixed background K\"{a}hler metric $\omega$, and define a new metric $\omega'$ by $\omega'=\omega+dd^c\psi$.
	
\begin{itemize}
\item 
If $\norm{\psi}_{C^\alpha}\leq C$, then the distance function in the new metric $d_{\omega'}(x,y)$ has a $C^{\alpha/2}$ H\"{o}lder bound in terms of the original metric, with constants only depending on $(X,\omega)$ and $\norm{\psi}_{C^\alpha}$. \\

\item  Without any control on $\psi$, then 
\begin{equation}
\int_{M\times M}  d\mu_x d\mu_y \exp(   \frac{ d_{\omega'}(x,y)   }{C}         ) \leq C',
\end{equation}
where $d\mu=\omega^n$ is the reference measure, and $C, C'$ denote constants independent of $\psi$.
\end{itemize}	
\end{thm}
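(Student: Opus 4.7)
The plan rests on a single unifying inequality: for any smooth curve $\gamma$ from $x$ to $y$, the pointwise bound $|\dot\gamma|^2_{\omega'} \leq \Tr_\omega(\omega')\,|\dot\gamma|^2_\omega$ combined with Cauchy-Schwarz in arclength yields
\[
d_{\omega'}(x,y)^2 \leq \text{length}_\omega(\gamma) \cdot \int_\gamma \Tr_\omega(\omega')\, ds_\omega.
\]
The strategy is to convert pointwise information about $\Tr_\omega\omega'$ into integral information by averaging over a well-chosen family of curves, since from $\omega' = \omega + dd^c\psi$ one can only control $\Tr_\omega\omega'$ distributionally.

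For the H\"older statement, given $x,y$ at $\omega$-distance $r$ I would construct a $(2n{-}1)$-parameter family $\gamma_s$ of smooth curves from $x$ to $y$, each a normal perturbation of the $\omega$-geodesic with amplitude $\leq \rho$ that tapers at the endpoints, so that their union is a spindle-shaped tube $T$ of $\omega$-volume of order $r\rho^{2n-1}$. Averaging the basic inequality over $s$ and applying Fubini converts $\text{avg}_s\int_{\gamma_s}\Tr_\omega\omega'\,ds_\omega$ into approximately $\rho^{-(2n-1)}\int_T\Tr_\omega\omega'\cdot\omega^n = n\rho^{-(2n-1)}\int_T\omega^{n-1}\wedge\omega'$. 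Splitting $\omega^{n-1}\wedge\omega' = \omega^n + \omega^{n-1}\wedge dd^c\psi$, the smooth first summand produces the main term. For the singular part, integration by parts against a cutoff $\chi$ supported in $T_{2\rho}$ rewrites $\int_T \omega^{n-1}\wedge dd^c\psi$ as $-\int_M\psi\,dd^c\chi\wedge\omega^{n-1}$; since this testing measure is exact and hence has total mass zero, one may subtract $\psi(x)$ and use $|\psi-\psi(x)|\leq [\psi]_\alpha r^\alpha$ on $T_{2\rho}$, together with $\|dd^c\chi\|_\infty \leq C\rho^{-2}$ and $\text{vol}(T_{2\rho}) \sim r\rho^{2n-1}$, to bound it by $C[\psi]_\alpha r^{1+\alpha}\rho^{2n-3}$. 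Assembling the factors and choosing $\rho = r$ yields $d_{\omega'}(x,y)^2 \leq C(r^2 + [\psi]_\alpha r^\alpha)$, which for small $r$ is dominated by the second term and gives the claimed $C^{\alpha/2}$ bound on diagonal pairs; the full statement about $d_{\omega'}$ as a function on $M\times M$ then follows from the triangle inequality.

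For the second statement the same pointwise inequality applied along the $\omega$-geodesic gives $d_{\omega'}(x,y)^{2k} \leq D^k\bigl(\int_{\gamma_{xy}}\Tr_\omega\omega'\,ds\bigr)^k$ for every $k\geq 1$, where $D$ is the $\omega$-diameter. Writing the $k$-th power as $k!$ times an integral over the simplex and interchanging integrations via Fubini, the moment $\iint d_{\omega'}^{2k}\,d\mu_x d\mu_y$ becomes an integral of $\prod_i\Tr_\omega\omega'(\gamma_{xy}(s_i))$ against the density of $\omega$-geodesics passing through $k$ ordered points. The essential input is the cohomological identity $\int_M\Tr_\omega\omega'\cdot\omega^n = n\int_M\omega^n$, so every factor of $\Tr_\omega\omega'$ contributes only a universally bounded total mass. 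Careful bookkeeping of the geodesic Jacobians then gives $\iint d_{\omega'}^{2k}\,d\mu_x d\mu_y \leq C^k$ with $C$ depending only on $(M,\omega)$, from which $\iint\exp(d_{\omega'}^2/(2C))\,d\mu_x d\mu_y \leq C'$ follows by term-by-term summation of the Taylor series. The elementary inequality $e^{t/C} \leq \sqrt{e}\,e^{t^2/(2C^2)}$ finally upgrades this to the required exponential integrability of $d_{\omega'}$ itself.

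The main obstacle is the geometric-combinatorial bookkeeping in the second part: after interchanging integrations, one must verify that the density of $\omega$-geodesics passing through $k$ prescribed points stays integrable in a way that absorbs the factorial coming from the simplex, so the moments grow only like $C^k$. This is a statement about the background $(M,\omega)$ alone, independent of $\psi$, but requires careful Jacobi-field analysis together with a cut-locus argument. As a minor technical caveat, both items should be read as a priori estimates for smooth K\"ahler potentials $\psi$ whose constants depend only on the quantities named, extending to general $C^\alpha$ or merely continuous $\psi$ by mollification.
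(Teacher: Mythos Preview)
Your approach to the first item is different from the paper's but broadly workable. The paper does not average over a tube of curves; instead it introduces the Lipschitz function $\rho_x(\cdot)=d_{\omega'}(x,\cdot)$, notes $|\nabla_\omega\rho_x|^2\leq \Tr_\omega\omega'$, and then proves a Morrey--Campanato bound
\[
\int_{B(x,r)}\Tr_\omega\omega'\,\omega^n \leq Cr^{2n-2+\alpha}
\]
by the same cutoff-and-integrate-by-parts trick you use. Morrey embedding then gives $\rho_x\in C^{\alpha/2}$ directly. This sidesteps the endpoint issue in your spindle construction: since all curves in your family pass through $x$ and $y$, the transverse Jacobian degenerates there and your claimed bound $\|dd^c\chi\|_\infty\leq C\rho^{-2}$ for a cutoff adapted to the tapered tube is not obviously correct near the tips. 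This is fixable with more care, but the ball-plus-Morrey argument is cleaner.

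For the second item there is a genuine gap, and you have correctly identified it yourself. The ``geometric-combinatorial bookkeeping'' you defer is not a technicality: the density of $\omega$-geodesics through $k$ ordered points is singular (think of conjugate points and cut loci), and there is no reason to expect the moment bound $\iint d_{\omega'}^{2k}\leq C^k$ to fall out of Jacobi-field analysis alone. The paper avoids this entirely. The missing idea is that, with no assumption on $\psi$ at all, the Lelong-number monotonicity formula gives
\[
\int_{B(x,r)}\omega'\wedge\omega^{n-1}\leq Cr^{2n-2}
\]
uniformly in $x$ and $r$. Combined with $|\nabla_\omega\rho_x|^2\leq\Tr_\omega\omega'$ this is exactly a BMO bound on $\rho_x$, so John--Nirenberg yields $\int_M\exp\bigl((\rho_x-\bar\rho_x)/C\bigr)\,\omega^n\leq C'$ uniformly in $x$. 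The double-integral statement then follows from Fubini, Jensen's inequality, and the elementary bound $\bar\rho=\iint d_{\omega'}\leq C$ (Cauchy--Schwarz plus the cohomological identity $\int_M\Tr_\omega\omega'\,\omega^n=n\int_M\omega^n$). In short, the second part is really a BMO/John--Nirenberg argument, not a moment computation.
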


\begin{proof}
Work in a coordinate chart $B(x,R)$ on $M$, where $\omega$ is uniformly equivalent to the Euclidean metric. Consider the distance function on $y\in B(x,R)$ defined by $\rho_x(y)=d_{\omega'}(x,y)$, which makes sense because $\omega'$ is smooth. Then $|\nabla_{\omega'}{\rho_x}|\leq 1.$ Thus
\[
|\nabla_{\omega}{\rho_x}|^2 \leq \Tr_{\omega}\omega'. 
\]
The next idea comes from the Chern-Levine inequality. For $r<R/3$, we can choose a cutoff function $\chi$ supported on $B(2r)$ and equal to one on $B(r)$, such that $dd^c\chi \leq \frac{C}{r^2}\omega$. We first consider the case with some H\"older bound on potential. This gives \[
\begin{split}
\int_{B(r)}(\Tr_{\omega}{\omega'}) \omega^n=n\int_{B(r)} \omega'\omega^{n-1}\leq n\int \chi\omega'\omega^{n-1}\leq Cr^{2n}+n\int \chi dd^c \psi \omega^{n-1}\\
=Cr^{2n}+n\int (\psi-\inf_{B(2r)}{\psi}) dd^c \chi   \omega^{n-1} \leq Cr^{2n}+C (osc_{B(2r)}\psi) r^{2n-2}\leq C r^{2n-2+\alpha}.
\end{split}
\]
Hence $\int_{B(r)} |\nabla_\omega \rho_x|^2 \omega^n \leq Cr^{2n-2+\alpha}$. This means the function $\rho_x$ is bounded in the Morrey norm. In particular by Morrey embedding theorem, and the trivial observation $\rho_x(x)=0$,
\[
\norm{\rho_x}_{C^{\alpha/2} (B(x, R/3))} \leq C,
\]
where $C$ depends only on the $(M,
\omega)$ and bound on $\psi$. This implies
\begin{equation}\label{Holderboundondistance}
d_{\omega'}(x,y) \leq C d_\omega (x,y)^{\alpha/2}
\end{equation}
on coordinate charts. The general H\"older bound for any $x,y\in M$ follows from the triangle inequality. %and the elementary inequality $(a+b)^{\alpha/2}\leq a^{\alpha/2}+b^{\alpha/2}.$

For the case with no assumption on the potential, we will use some ideas in the standard proof of Skoda integrability theorem. For convenience, we assume the normalisation $\int_M \omega^n=1$.
We notice that by the monotonicity formula in the theory of Lelong numbers, the estimate
\[
\int_{B(r)} \omega'\omega^{n-1} \leq Cr^{2n-2}\int_{M} \omega'\omega^{n-1} \leq Cr^{2n-2}
\]
holds always. This combined with the John-Nirenberg inequality gives
\begin{equation}
\int_{M} \exp(  \frac{\rho_x-\bar{\rho_x}}{C}   ) \omega^n \leq C',
\end{equation}
where $\bar{\rho_x}$ is the average of $\rho_x$, and the constants are independent of $x$ or $\omega'$. %The intuition of the above inequality is that in the unlikely event $\rho_x$ ever take large values, then $x$ must be far away from almost every point on $M$, to make $\bar{\rho_x}$ large, and therefore $x$ looks like the tip of a very long and thin bubble. 
Write the measure $d\mu=\omega^n$, and define $\bar{\rho}=\int_M\int_M d_{\omega'} (x,y)d\mu_x d\mu_y$.
Then by Fubini theorem and Jensen convexity inequality,
\[
\begin{split}
\int_M  d\mu_y \exp(   \frac{\bar{\rho_y}-\bar{\rho}}{C}         )      =\int_M d\mu_y \exp( \int_{M}   \frac{d_{\omega'}(x,y)-\bar{\rho_x}}{C}   d\mu_x          )  \\
  \leq \int_{M}\int_M \exp(  \frac{d_{\omega'}(x,y)-\bar{\rho_x}}{C}   ) d\mu_x d\mu_y \leq C'.
\end{split}
\]
It is quite well known (see for example \cite{Zhang}, the proof of lemma 2.2) that \[
\int_M\int_M d_{\omega'}(x,y)^2 d\mu_x d\mu_y \leq C\int_{M} \Tr_\omega \omega' d\mu\leq C,\] so
the number $\bar{\rho}$ is controlled by a constant, using Cauchy-Schwarz.
Thus
\begin{equation}
\int_M  d\mu_y \exp(   \frac{\bar{\rho_y} }{C}         ) \leq C',
\end{equation}
and therefore
\begin{equation}
\begin{split}
\int_{M\times M}  d\mu_x d\mu_y \exp(   \frac{ d_{\omega'}(x,y)   }{C}         )
\leq & (\int_{M\times M} d\mu_x d\mu_y \exp(  \frac{d_{\omega'}(x,y)-\bar{\rho_x}}{C/2}   ) )^{1/2}\\ & ( \int_{M\times M}  d\mu_x d\mu_y \exp (\frac{\bar{\rho_x} }{C/2})      )^{1/2}
\leq C',
\end{split}
\end{equation}
implying the claim.
\end{proof}

\begin{rmk}
	Intuitively, one may think the potential has the same order as the distance squared, which explains the way their H\"older exponents are related.
\end{rmk}

\begin{rmk}
In the argument, the K\"ahler class of the reference metric $\omega$ can be decoupled from the class of $\omega'$. This means if $\omega'=\omega''+\sqrt{-1}\partial \bar{\partial} \psi$, where $0\leq \omega''\leq C \omega$, then the first part of the theorem involving the H\"older bound still holds.
\end{rmk}

The following is an immediate consequence, building on substantial work in pluripotential theory:

\begin{cor}\label{boundedgeometryHolderbound}
	Suppose $M$ is a compact K\"{a}hler manifold of dimension $n$, with a fixed background K\"{a}hler metric $\omega$. Let $h$ be a smooth density function with a bound on $L^p$ for some $p>1$, such that the volume $\int_M h\omega^n=\int_M \omega^{n}$. We consider the unique metric $\omega'=\omega+dd^c\psi$ solving the Monge-Amp\`ere equation $\omega'^n=h\omega^n$. Then the distance function in the new metric $d_{\omega'}(x,y)$ has a H\"{o}lder bound in terms of the original metric, with constants and H\"older exponent only depending on $(X,\omega)$ and $\norm{h}_{L^p}$. 
\end{cor}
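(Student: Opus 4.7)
The plan is to combine the preceding theorem (Hölder bound on the K\"ahler potential implies Hölder bound on the distance function, with exponent halved) with standard pluripotential-theoretic results that extract a Hölder estimate on the potential $\psi$ from an $L^p$ bound on the Monge-Amp\`ere density $h$. Everything in the corollary is then a matter of composing these two inputs.

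First I would invoke the Ko\l odziej-type H\"older regularity theorem for solutions of the Monge-Amp\`ere equation on a fixed compact K\"ahler manifold: given the $L^p$ bound on $h$ with $p>1$, together with the cohomological normalisation $\int_M h\omega^n=\int_M\omega^n$, the unique normalised solution $\psi$ of $(\omega+dd^c\psi)^n=h\omega^n$ satisfies a uniform H\"older estimate
\[
\norm{\psi}_{C^\alpha(M,\omega)}\leq C,
\]
with $\alpha\in(0,1)$ and $C$ depending only on $(M,\omega)$, $p$ and $\norm{h}_{L^p}$. This is precisely the classical (non-family) special case of Statement \ref{Holderboundpluripotentialtheory}, which is proved in the literature (Ko\l odziej, and the refinements by Dinew--Ko\l odziej and Demailly--Dinew--Guedj--Ko\l odziej--Pham--Zeriahi). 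In particular the $L^\infty$ bound on $\psi$ is Ko\l odziej's original estimate, and the promotion to H\"older regularity is now standard.

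Having the uniform $C^\alpha$ bound on $\psi$, I would then feed $\psi$ into the first bullet of the preceding theorem applied on $(M,\omega)$. That gives a H\"older bound
\[
d_{\omega'}(x,y)\leq C\, d_\omega(x,y)^{\alpha/2}
\]
for all $x,y\in M$, with constant and H\"older exponent depending only on the data listed. Since the target $d_\omega$-metric is fixed once $(M,\omega)$ is fixed, this is exactly the conclusion of the corollary. No further estimates are needed.

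The only genuinely non-trivial ingredient is the H\"older estimate on $\psi$, so the main obstacle is really just the reference to the pluripotential theory on a single fixed manifold: one must be sure that the standard proof of Ko\l odziej's $L^\infty$ bound and the subsequent H\"older improvement yield constants depending only on $\norm{h}_{L^p}$ (together with the geometry of $(M,\omega)$), not on any additional smoothness of $h$. This is well documented in the literature; once it is cited, the remainder of the proof is merely plugging this into the previous theorem.
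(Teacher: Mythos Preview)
Your proposal is correct and follows essentially the same route as the paper: cite the Ko\l odziej--type H\"older estimate on $\psi$ from the $L^p$ bound on $h$ (the paper invokes Theorem~A* of \cite{Ko}, noting the explicit exponent range $\alpha<\frac{2}{nq+1}$), then feed this $C^\alpha$ bound into the first bullet of the preceding theorem to obtain the $C^{\alpha/2}$ H\"older control on $d_{\omega'}$.
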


\begin{proof}
By theorem A* of \cite{Ko}, under a suitable normalisation, the K\"{a}hler potential is bounded in some H\"{o}lder norm, $\norm{\psi}_{C^\alpha}\leq C$. The constant in \cite{Ko} is independent of the manifold under the assumption of bounded geometry (see \cite{Ko}), which means uniform control on the diameter of $(X,\omega)$, the injectivity radius and the bisectional curvature. The H\"older exponent $\alpha$ can be taken to be any positive number smaller than $\frac{2}{nq+1}$, where $q$ is the conjugate exponent of $p$. 
\end{proof}

%\begin{rmk}
%The constant in the H\"older estimate depends on the H\"older bound on K\"ahler potential in \cite{Ko}, the construction of good cutoff functions and the Morrey embedding theorem. In particular, the constant in \cite{Ko} is independent of the manifold under the assumption of bounded geometry (see \cite{Ko}), which means uniform control on the diameter of $(X,\omega)$, the injectivity radius and the bisectional curvature. The H\"older exponent $\alpha$ can be taken to be any positive number smaller than $\frac{2}{nq+1}$, where $q$ is the conjugate exponent of $p$. The Morrey constant is also controlled by bounded geometry. The existence of good cutoff functions can be guaranteed if there is a bounded collection of complex charts, each having at least a certain size, whose Euclidean metrics are uniformly equivalent to the background metric $\omega$. This is very likely not an independent assumption, but in practical situations is easy to verify.
%\end{rmk}
\begin{rmk}
The strength of this result can be seen by comparing with the general Riemannian situation, where even the $L^\infty$ bound on the volume form is very far from controlling the diameter. Notice we do not even require any control on the Ricci curvature of $\omega'$.
\end{rmk}

\subsection{The uniform diameter bound}

Now we return to the collapsing metric problem.

\begin{conj}\label{uniformfibrediameterboundconjecture}
In the Lefschetz fibration case with $n\geq 3$, 
there is a uniform diameter bound on the normalised fibre metric, independent of $t$ and the fibre $X_y$: \begin{equation}\label{uniformfibrediameter}
\text{diam}(\frac{1}{t}\tilde{\omega_t}|_{X_y} )\leq C.
\end{equation}
\end{conj}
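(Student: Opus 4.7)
The plan is to reduce the diameter bound to a H\"older estimate on the fibrewise K\"ahler potential, and then to invoke Statement \ref{Holderboundpluripotentialtheory}. On each smooth fibre $X_y$, I would write $\frac{1}{t}\tilde{\omega_t}|_{X_y}=\omega_{SF,y}+dd^c v_y$ as in the proof of (\ref{fibrepotentialoscillation}), with $\text{osc}_{X_y} v_y \leq C$ uniformly. Since the semiflat metric $\omega_{SF,y}$ has uniformly bounded diameter by (\ref{diameterforsemiflat}), a uniform H\"older bound on $v_y$, combined with the first part of the Theorem in section \ref{Generalboundsondistancefunctions}, would yield a uniform H\"older estimate on the distance function of $\frac{1}{t}\tilde{\omega_t}|_{X_y}$ in terms of that of $\omega_{SF,y}$, and hence the desired uniform diameter bound.

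To produce the H\"older bound on $v_y$, I would apply Statement \ref{Holderboundpluripotentialtheory} to the Monge-Amp\`ere equation satisfied by $\frac{1}{t}\tilde{\omega_t}|_{X_y}$ with background $\omega_{SF,y}$. The volume density $h_y=(\frac{1}{t}\tilde{\omega_t}|_{X_y})^{n-m}/\omega_{SF,y}^{n-m}$ is uniformly $L^\infty$ by (\ref{volumeupperboundonfibre1}), and so satisfies any $L^p$ bound with $p>1$. Provided the deformation family $(X_y,\omega_{SF,y})$ has uniformly $C^\infty$ bounded local potentials in the sense of Statement \ref{Holderboundpluripotentialtheory}, this delivers a uniform H\"older bound on $v_y$ with exponent depending only on $n$ and $p$.

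The principal obstacle is verifying the uniform bounded geometry hypothesis as $y$ approaches a singular value $y_0\in S$: the semiflat metric $\omega_{SF,y}$ need not extend smoothly across the nodal fibre. For the Lefschetz K3 setting ($n=3$, $m=1$), the special algebro-geometric input is that a K3 surface with an $A_1$ singularity admits a simultaneous smoothing inside the K3 moduli space, and correspondingly the ODP in the total 3-fold admits a small resolution $\tilde X\to X$ producing a smooth K\"ahler 3-fold fibred over $Y$ whose fibres are all smooth K3s with continuously varying K\"ahler classes. Lifting the Monge-Amp\`ere problem to $\tilde X$, Statement \ref{Holderboundpluripotentialtheory} applies uniformly across the resolved family, including at the fibre over $y_0$, and the resulting H\"older estimate on $v_y$ descends to the original fibration on the smooth locus. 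For $n>3$ the singular fibre is a higher dimensional Calabi-Yau variety with a node, and the analogous resolution/smoothing picture is less tractable; this is the reason the general conjecture remains open and is stated only conjecturally.

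Once the uniform H\"older bound on $v_y$ is secured on smooth fibres, the diameter bound (\ref{uniformfibrediameter}) follows as sketched in the first paragraph. The bound should then extend to the singular fibre by continuity, using smooth fibrewise control away from the critical point from Corollary \ref{smoothboundfibrewise}, together with the logarithmic radial estimate (\ref{logarithmicdiameterbound}) to absorb the diameter contribution coming from a small neighbourhood of the node. I expect the main technical work in implementing this plan to lie entirely in the algebro-geometric step of verifying the bounded geometry hypothesis of Statement \ref{Holderboundpluripotentialtheory} uniformly across the resolved family, all other ingredients being already in place in the paper.
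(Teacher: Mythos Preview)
Your high-level strategy matches the paper's: reduce the diameter bound to a uniform H\"older estimate on the fibrewise potential via Statement~\ref{Holderboundpluripotentialtheory}, then apply the distance estimate of Section~\ref{Generalboundsondistancefunctions}. The implementation differs in several points, and one of them is a genuine gap.

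The paper does not work with $\omega_{SF,y}$ as the reference metric, precisely because establishing uniform bounded geometry for $\omega_{SF,y}$ near the singular fibre is itself nontrivial. Instead it first performs a local reduction (Statements~\ref{localreductiontoquadric} and~\ref{localreductiontoquadric2}) to the explicit family of quadrics $\{-yZ_0^2+Z_1^2+\cdots+Z_n^2=0\}\subset\mathbb{P}^n$ with the Fubini--Study metric, where everything can be computed by hand. More importantly, your description of the resolution is incorrect: the total space $X$ is smooth, so there is no ODP in $X$ to resolve. The paper's key algebro-geometric move is to \emph{first} base change via $y=w^2$; the fibre product then acquires a genuine threefold ODP, and it is \emph{that} which admits the standard small resolution $\tilde{\mathcal{X}}\to\C_w$ with all fibres smooth. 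This is Brieskorn's simultaneous resolution of $A_1$ surface singularities, which only exists after a ramified cover of the base --- an ingredient absent from your sketch.

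There is a further subtlety you gloss over. On the resolved family $\tilde{\mathcal{X}}_w$, the class $[\omega']$ coming from the original fibration is only \emph{semi}-positive on the central fibre $\tilde{\mathcal{X}}_0$: it is pulled back from the orbifold quotient and vanishes along the exceptional $\mathbb{P}^1$, so it sits on the boundary of the K\"ahler cone. This is exactly why the proven Theorem~A* of \cite{Ko} does not suffice and why the full conjectural Statement~\ref{Holderboundpluripotentialtheory} (which allows $\theta_y$ to be merely semi-positive) is required. You invoke the right statement, but do not identify this as the reason. Finally, the paper treats the singular fibre directly by an orbifold argument (pass to the local $\Z_2$ cover, where the volume density becomes $L^\infty$), rather than by continuity from smooth fibres as you propose.
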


\begin{rmk}
We restrict attention to Lefschetz fibrations for our modest purpose and for the simplicity of exposition, although one may hope the result is true more generally for smoothings of Calabi-Yau varieties. To motivate this conjecture, recall the logarithmic estimate (\ref{logarithmicdiameterbound}), which is not far from (\ref{uniformfibrediameter}). As another piece of evidence, recall theorem \ref{Tosattireview} of Tosatti et al, which asserts that on all smooth fibres $\frac{1}{t}\tilde{\omega}_t|_{X_y}\to \omega_{SF,y}$ as $t\to 0$, with convergence rate dependent on the fibre. Now the limiting metrics on the fibres have uniform diameter bounds, which is compatible with the conjecture.
\end{rmk}

This uniform diameter bound is a very important condition, as shall be apparent later. In particular this leads to the fact that in a suitable scale, the neighbourhood of any given fibre (which could be a singular fibre) satisfies a local non-collapsing bound as $t\to 0$, and therefore we can apply the well developed machinery of noncollapsing Gromov-Hausdorff convergence theory. More delicate consequences will be treated later.

\begin{cor}
	Assuming the volume condition (\ref{volumecondition}) and the uniform fibre diameter bound (\ref{uniformfibrediameter}), $\frac{1}{t}\tilde{\omega_t}$ satisfies the local volume non-collapsing estimate. There are constants, such that for any central point $P$, and for any $C \leq R\leq C/t^{1/2}$,
	\begin{equation}\label{localvolumenoncollapsing1}
	Vol(B_{\tilde{\omega_t}}(P, Rt^{1/2})) \geq CR^{2m}t^{n}.
	\end{equation}
	Morever, for any $R\leq C$,
	\begin{equation}\label{localvolumenoncollapsing2}
	Vol(B_{\tilde{\omega_t}}(P, Rt^{1/2})) \geq CR^{2n}t^{n}.
	\end{equation}
\end{cor}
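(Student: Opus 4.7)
The plan is to rescale to $g_t := \tfrac{1}{t}\tilde{\omega}_t$, in which $\text{Ric}(g_t) = 0$, the fibres have diameter at most a uniform $C_0$, and the volume form is $g_t^n = a_t\, t^{-m}\, i^{n^2}\Omega\wedge\overline{\Omega}$. In these units the two claims become $Vol_{g_t}(B_{g_t}(P, R)) \geq c\, R^{2m}$ for $R_0 \leq R \leq c'/\sqrt{t}$, and $Vol_{g_t}(B_{g_t}(P, R)) \geq c\, R^{2n}$ for $R \leq R_0$. I would prove the large-radius statement by a direct construction and then deduce the small-radius statement from Bishop--Gromov volume comparison.

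For $R \geq R_0 > 2C_0$, the strategy is to exhibit a preimage tube $f^{-1}(V)$ contained in $B_{g_t}(P, R)$ whose volume is easy to compute. If $P$ happens to be a critical point of $f$, I would first move along $X_{f(P)}$ a distance at most $C_0$ in $g_t$ to reach a smooth point $P'$ of the fibre (which exists in abundance in the Lefschetz setting, where the nodes are isolated). Near $P'$ the map $f$ is a submersion, so a local holomorphic section $\sigma\colon V \to X$ with $\sigma(f(P')) = P'$ exists over a base ball $V = B_Y(f(P'), r_Y)$. Proposition \ref{smoothboundawayfromsingularity} delivers the crucial uniform control: in its distinguished coordinates the rescaled metric $g_t$ is smoothly equivalent to the Euclidean metric, so any fixed smooth section $\sigma$ satisfies $d_{g_t}(\sigma(y_0), \sigma(y_1)) \leq C\, d_{\omega_Y}(y_0, y_1)/\sqrt{t}$. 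Choosing $r_Y = c R\sqrt{t}$ for sufficiently small $c$, the triangle inequality combined with the fibre diameter bound forces $d_{g_t}(P, Q) \leq 2C_0 + C' c R \leq R$ for every $Q \in f^{-1}(V)$. A Fubini computation with the Calabi--Yau volume form, together with condition (\ref{volumecondition}) and the observation that $F_y$ is a continuous positive function on the compact base $Y$ and hence bounded below uniformly, then yields $Vol_{g_t}(f^{-1}(V)) \geq c'' R^{2m}$.

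The small-radius case is immediate from Bishop--Gromov applied to $g_t$: since $\text{Ric}(g_t) = 0$, the ratio $Vol_{g_t}(B_{g_t}(P, r))/r^{2n}$ is monotone non-increasing in $r$, so using $r = R_0$ as the reference scale (where the previous step applies) gives
\[
Vol_{g_t}(B_{g_t}(P, R)) \geq \frac{Vol_{g_t}(B_{g_t}(P, R_0))}{R_0^{2n}}\, R^{2n} \geq c''' R^{2n}
\]
for all $R \leq R_0$, completing both bounds.

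The main obstacle is the uniform control on $\sigma$ in the rescaled metric even when $P'$ lies on a singular fibre, which is precisely what Proposition \ref{smoothboundawayfromsingularity} provides, together with the positivity of $F_y$ down to the singular values. A minor technical point, which can be handled by splitting into the regimes $R \leq \delta/\sqrt{t}$ and $R > \delta/\sqrt{t}$ (the latter being essentially the total-volume bookkeeping $Vol_{g_t}(X) \asymp t^{-m}$), is that the base ball $V$ must lie inside a single coordinate chart on $Y$ avoiding any additional critical values; in the Lefschetz case the isolatedness of $S$ makes this automatic.
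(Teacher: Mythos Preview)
Your proof is correct and follows essentially the same architecture as the paper's: show that the $\tilde{\omega}_t$-ball of radius $Rt^{1/2}$ contains a full preimage tube $f^{-1}(V)$ over a base ball of radius $\sim Rt^{1/2}$, compute the tube volume from the prescribed Calabi--Yau volume form, and deduce the small-radius bound from Bishop--Gromov. The only difference is in how the tube inclusion is obtained: you pass through a smooth point and invoke the full smooth control of Proposition~\ref{smoothboundawayfromsingularity} along a holomorphic section, whereas the paper simply observes that on every fibre there is a region of definite $\omega_X$-measure where $H$ is bounded and hence $\tilde{\omega}_t$ is uniformly equivalent to $\omega_t$, which already suffices to bound the horizontal distance between nearby fibres---so Corollary~\ref{upperboundonmetric} is enough and the stronger Proposition~\ref{smoothboundawayfromsingularity} is not needed.
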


\begin{proof}
	Assume first that $R\geq C$.
	We notice as a particular consequence of the condtion (\ref{volumecondition}), on any fibre, there is some region with nontrivial $\omega_X$ measure, where $H\leq C$, so the metric $\tilde{\omega_t}$ is uniformly equivalent to $\omega_t$. Thus if $d_{\omega_Y}(y,y')\leq Rt^{1/2}/C$, then the distance between the two fibres $X_{y}$ and $X_{y'}$ is less than $\frac{1}{3}Rt^{1/2}$. Now by the fibrewise diameter bound, we know that each fibre has diameter less than $\frac{1}{3}Rt^{1/2}$. This means we can reach any point on a nearby fibre within a controlled amount of distance, so the ball $B_{\tilde{\omega_t}}(P, Rt^{1/2})$ contains the preimage of $B_{\omega_Y}(f(P), Rt^{1/2}/C)$. Since we know the volume form of $\tilde{\omega_t}$ is $a_t t^{n-m} i^{n^2}\Omega\wedge \overline{\Omega}$, we obtain $Vol ( f^{-1}( B_{\tilde{\omega_t}}(P, Rt^{1/2})   ) \geq CR^{2m}t^{n}$. Hence the estimate (\ref{localvolumenoncollapsing1}) holds. The other estimate (\ref{localvolumenoncollapsing2}) follows from Bishop-Gromov inequality using the Ricci flatness of $\tilde{\omega_t}$.
\end{proof}

We make some preliminary remarks about the nature of the problem to prove the uniform diameter bound.

\begin{rmk} (Local reduction)
	Away from the region with small $H$, the metric $\frac{1}{t} \tilde{\omega_t}|_{X_y} $ has smooth bounds. We also know already that on $X_y$, the oscillation of $\phi$ is bounded by $Ct$, (\cf (\ref{fibrepotentialoscillation})). Another piece of special information, by recalling the proof of (\ref{fibrepotentialoscillation}), is the $L^2$ gradient estiamtes
	\begin{equation}\label{L2gradientestimate}
	\int_{X_y} |\nabla \frac{1}{t} \phi|^2 \omega_{SF,y}^{n-m} \leq C.
	\end{equation}
So after possibly adjusting the average value of $\phi$, we can assume smooth bound on $\frac{1}{t}\phi$ in the $\omega_X|_{X_y}$ metric in the region $H>1$. 
\end{rmk}

Consequently, the following local statement implies the conjecture:
\begin{statement} (Reduction to local case)\label{localreductiontoquadric}
Consider the setup of example \ref{ODPexample1}. Suppose there is a metric $\omega'=\omega_{Eucl}|_{X_y}+i \partial \bar{\partial} \psi$ on $X_y\cap B(1)$, with the volume bound
\[
%C\omega_{Eucl}|_{X_y}  \leq 
\omega'^{n-1} \leq \frac{C}{H}\omega_{Eucl}^{n-1}|_{X_y},
\]
and morever $osc_{X_y\cap B(1)} \psi \leq C$, the local version of 
 (\ref{L2gradientestimate}) holds, and $\psi$ has smooth bounds near the boundary, then the diameter of the $\omega'$ metric has a uniform bound for all $|y|\leq 1/2$.
\end{statement}

There is a compact analogue of this local reduction:

\begin{eg}\label{quadricfamily}
Consider the smoothing of a singular quadric in $\mathbb{P}^n$, where $n\geq 3$, and $y$ is a small parameter:
\[ X_y=\{ -yZ_0^2+Z_1^2+\ldots +Z_n^2=0         \}. \]
This can be viewed as a compact model for ODP degeneration. We equip the fibres $X_y$ with the Fubini-Study metric $\omega_{Fubini}$. There is a function $H$ which on the chart  $\{ Z_0=1, \sum_1^n |Z_i|^2<1  \}$ is comparable to $\sum_1^n{|Z_i|^2}$, and is of order one elsewhere. The following special statement about the quadric family implies the conjecture:
\begin{statement}\label{localreductiontoquadric2}
	(Reduction to quadric family)
Suppose there is a metric $\omega'=\omega_{Fubini}|_{X_y}+i \partial \bar{\partial} \psi$ on $X_y$, with the volume bound
\[
%C\omega_{Eucl}|_{X_y}  \leq 
\omega'^{n-1} \leq \frac{C}{H}\omega_{Fubini}|_{X_y}^{n-1},
\]
and morever $osc_{X_y} \psi \leq C$, the local version of the gradient estimate (\ref{L2gradientestimate}) holds, and $\psi$ has smooth bounds away from $\{ Z_0=1, \sum_1^n |Z_i|^2<1  \} $, then the diameter of the $\omega'$ metric has a uniform bound for all $|y|\leq 1/2$.
\end{statement}

The reason these are all equivalent, is that one can always isometrically and holomorphically embed the $\omega'$ metric in a small ball of interest into the compact quadric model space using some cutoff functions, and extend $\omega'$ to the whole quadric model with smooth bounds. With these in mind, we can sometimes switch freely between global and local discussions.

\end{eg}

\begin{rmk}
Another observation is that for the purpose of bounding $\frac{1}{t}{\tilde{\omega_t}}|_{X_y}$, the background metric is free for us to choose. In the ODP smoothing example, we have a host of natural background metrics:

\begin{itemize}
	\item the Fubini-Study metric on $X_y$;
	\item the restriction of the Euclidean metric, defined locally;
	\item the Calabi-Yau metric on $X_y$;
	\item the standard Stenzel metric on smoothings of ODP, defined locally.
\end{itemize}
Choosing a good background metric can sometimes be very convenient, which is an idea we shall utilise in the next section.

\end{rmk}

%\begin{rmk}
%One may also attempt to reduce the uniform diameter bound to questions on the singular fibre by means of a compactness argument. A closely related question is: given a potential on the cone $z_1^2+\ldots +z_n^2=0$ which solves the homogeoneous Monge-Amp\`ere equation, and grows at most by a sufficiently slow power law, then does it need to be constant?
%\end{rmk}

\subsection{The 3-fold case}

Assuming the conjectural Statement \ref{Holderboundpluripotentialtheory}, we  
prove Statement \ref{localreductiontoquadric2}  for the special case $n=3$ using very special algebro-geometric facts,  thus resolving the conjecture \ref{uniformfibrediameterboundconjecture} for threefolds. The application is to a Lefschetz fibration by K3 surfaces.

We first show independently that the diameter bound (\ref{uniformfibrediameter}) is true on the singular central fibre of the Lefschetz fibration.
	
\begin{prop}
On the central fibre $X_{y=0}$, the diameter bound holds uniformly in $t$:
\[
\text{diam} (\frac{1}{t}\tilde{\omega}_t|_{X_0})\leq C.
\]
\end{prop}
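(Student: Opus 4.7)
The singular central fibre $X_0$ is a K3 surface whose only singularities are ordinary double points. Writing $\tilde{\omega}_t=\omega_t+dd^c\phi$ and noting that $\omega_0=f^*\omega_Y$ vanishes in restriction to any fibre, we have $\frac{1}{t}\tilde{\omega}_t|_{X_0}=\omega_X|_{X_0}+dd^c\psi_0$ where $\psi_0:=\phi|_{X_0}/t$. The plan is to obtain a uniform H\"older bound on $\psi_0$ via pluripotential theory, and then derive the diameter bound from the first part of the H\"older-distance theorem of Section \ref{Generalboundsondistancefunctions}. Since $X_0$ is singular, I first pass to its crepant minimal resolution $\pi:\tilde{X}_0\to X_0$, which is a smooth K3 surface whose exceptional locus consists of $(-2)$-curves above the nodes.

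\textbf{Pluripotential inputs on $\tilde{X}_0$.} The oscillation estimate $\mathrm{osc}_{X_0}(\psi_0)\leq C$ follows from (\ref{fibrepotentialoscillation}) on smooth fibres by continuity of $\phi$ on $X$ and compactness of $X_0$. For the volume density, (\ref{volumeupperboundonfibre1}) reads $(\frac{1}{t}\tilde{\omega}_t|_{X_y})^{n-m}\leq C\,F_y^{-1}\,i^{(n-m)^2}\Omega_y\wedge\overline{\Omega_y}$ for smooth $y$, and extends by continuity to the smooth locus of $X_0$. Crucially, because the resolution is crepant, $\pi^*(i^4\Omega_0\wedge\overline{\Omega_0})$ extends to a smooth nowhere-vanishing volume form on $\tilde{X}_0$ (it is the Calabi-Yau volume form of the resolved K3 surface). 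Thus on $\tilde{X}_0$, setting $\theta:=\pi^*(\omega_X|_{X_0})$ (smooth and semi-positive, vanishing only along the exceptional locus), the closed positive current $\theta+dd^c(\pi^*\psi_0)$ has top wedge power whose density relative to any fixed smooth K\"ahler volume form on $\tilde{X}_0$ is uniformly bounded in $L^\infty$.

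\textbf{Conclusion via H\"older bound and distance theorem.} I then apply Statement \ref{Holderboundpluripotentialtheory} in the single-fibre case on the smooth K3 $\tilde{X}_0$, with semi-positive reference form $\theta$ and bounded volume density (hence in $L^p$ for every $p>1$), to conclude a uniform H\"older bound on $\pi^*\psi_0$, with exponent and norm independent of $t$. The H\"older-distance theorem of Section \ref{Generalboundsondistancefunctions}, together with the remark allowing a semi-positive reference form $\omega''$ with $\omega''\leq C\omega$ for some genuine K\"ahler $\omega$, then yields a uniform diameter bound on $(\tilde{X}_0,\pi^*(\frac{1}{t}\tilde{\omega}_t|_{X_0}))$. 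Surjectivity of $\pi$ transfers this to the desired diameter bound on $(X_0,\frac{1}{t}\tilde{\omega}_t|_{X_0})$.

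\textbf{Main obstacle.} The substantive technical point is invoking the H\"older-continuity conclusion of Statement \ref{Holderboundpluripotentialtheory} with a reference form $\theta$ which is only semi-positive: the class $[\theta]$ on $\tilde{X}_0$ is big and nef but not K\"ahler, being the pullback of a K\"ahler class under a birational contraction, and the corresponding Monge-Amp\`ere equation degenerates along the exceptional locus. This setting is within the reach of pluripotential theory for big cohomology classes in the spirit of Eyssidieux-Guedj-Zeriahi, but extracting the uniform H\"older control needed for the diameter estimate requires some care; alternatively one may work directly on $X_0$ viewed as an orbifold with $\mathbb{C}^2/\mathbb{Z}_2$ model singularities at the nodes.
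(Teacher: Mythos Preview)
Your approach via the crepant resolution $\pi:\tilde X_0\to X_0$ is coherent, but it has a cost you yourself flag at the end: the pulled-back reference form $\theta=\pi^*(\omega_X|_{X_0})$ is only big and nef, not K\"ahler, so to extract a H\"older bound on the potential you are forced to invoke the conjectural Statement~\ref{Holderboundpluripotentialtheory}. The paper takes the other route you mention in your final sentence and thereby avoids this entirely. The point of stating this proposition separately is precisely that it is \emph{unconditional}: the paper writes ``we first show independently'' just before it.

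The paper's argument uses the orbifold structure of the nodal fibre rather than its resolution. Via the local reduction (Statement~\ref{localreductiontoquadric}), one only needs to control the metric on $X_0\cap B(1)\simeq \C^2/\Z_2$. Passing to the local double cover $\C^2\to\C^2/\Z_2$, the crucial computation is that the singular volume bound $\omega'^{2}\leq \frac{C}{H}\omega_{Eucl}^{2}|_{X_0}$ pulls back to an $L^\infty$ bound with respect to the \emph{flat} metric on $\C^2$: indeed the covering map is quadratic, so $H\sim (|u|^2+|v|^2)^2$ while the pullback of $\omega_{Eucl}|_{X_0}$ is $\sim (|u|^2+|v|^2)\,\omega_{\mathrm{flat}}$, and the two factors cancel. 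On the double cover the reference metric is now genuinely K\"ahler, so after embedding into a smooth compact model one may apply the \emph{proven} H\"older estimate of Ko{\l}odziej (Theorem~A* in \cite{Ko}) and then the H\"older--distance theorem of Section~\ref{Generalboundsondistancefunctions}. No semi-positive or degenerate reference class is needed.

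In summary: your resolution approach trades the orbifold singularity for a degenerate K\"ahler class and thereby imports the conjectural input; the paper's double-cover approach trades it for a $\Z_2$-quotient of a smooth ball and stays within established pluripotential theory.
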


\begin{proof}
This follows from the local reduction statement \ref{localreductiontoquadric}. The central fibre is an orbifold with singularity $\C^2/\Z_2$, and the volume density of $\frac{1}{t}\tilde{\omega}_t|_{X_0}$ has $L^\infty$ bound with respect to the flat orbifold metric, so we can pass to the local double cover and apply the discussions in section \ref{Generalboundsondistancefunctions}.
\end{proof}

Next we show the main result of this section, using \ref{Holderboundpluripotentialtheory}. The author thanks S. Dinew for clarifying some pluripotential theoretic issues.

\begin{prop} Assume statement \ref{Holderboundpluripotentialtheory}.
Statement \ref{localreductiontoquadric2} holds for $n=3$.	Morever, for any $0<\alpha/2<\frac{1}{3}$, there is a uniform H\"older estimate
\begin{equation}
d_{ \omega'  }(x, x') \leq C d_{ \omega_{Fubini}|_{X_y   }   } (x, x')^{\alpha/2}
\end{equation}
for any two points $x, x'$ on $X_y$ and the constant is independent of $y$ with $|y|\leq 1$.
\end{prop}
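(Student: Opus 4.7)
The plan is to apply Statement~\ref{Holderboundpluripotentialtheory} to produce a uniform H\"older estimate on the K\"ahler potential with exponent arbitrarily close to $2/3$, and then invoke the theorem of Section~\ref{Generalboundsondistancefunctions} to pass to the desired H\"older distance bound with exponent arbitrarily close to $1/3$. Two obstacles stand in the way: the central fibre $X_0$ is singular, so $\{X_y\}$ is not a smooth deformation family as the statement requires; and the naive volume condition $\omega'^{n-1}\leq(C/H)\omega_{Fubini}^{n-1}$ only gives $h_y\in L^p$ for $p<2$, which by Kolodziej yields at best $\alpha<2/5$.

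Both difficulties will be resolved simultaneously by the dimension-$3$ specific existence of a simultaneous small resolution. After the base change $y=s^2$, the total space acquires a three-fold ordinary double point isomorphic to the conifold $\{xy=zw\}$, which admits a small resolution $\pi:\tilde{\mathcal{X}}\to\mathcal{X}_{base}$ replacing the node by a $\mathbb{P}^1$ with normal bundle $\mathcal{O}(-1)\oplus\mathcal{O}(-1)$. The induced map $\tilde{\mathcal{X}}\to\Delta_s$ is then a smooth family of smooth compact surfaces, with generic fibre $\tilde{X}_s\cong X_{s^2}$ and central fibre $\tilde{X}_0\cong\mathbb{F}_2$ (the minimal resolution of the quadric cone). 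I would equip $\tilde{\mathcal{X}}$ with a smooth K\"ahler form $\Theta$ and set $\theta_s=\Theta|_{\tilde{X}_s}$; these form a smoothly varying family of K\"ahler forms with uniformly $C^\infty$-bounded local potentials, fitting the hypotheses of Statement~\ref{Holderboundpluripotentialtheory}.

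Writing $\pi^*\omega'=\theta_s+dd^c\tilde{\psi}_s$, the critical claim is that the Monge-Amp\`ere density $h_s=(\pi^*\omega')^{n-1}/\theta_s^{n-1}$ is uniformly bounded in $L^\infty$. The point, verified by a local computation in coordinates adapted to the small resolution near the exceptional $\mathbb{P}^1$, is that the ratio of top wedge powers $(\pi^*\omega_{Fubini}|_{\tilde{X}_s})^{n-1}/\theta_s^{n-1}$ vanishes along the exceptional curve to exactly the order of $\pi^*H$, so the $1/H$-singularity in the volume hypothesis is absorbed into the degeneracy of the pulled-back Fubini-Study wedge power relative to the full-rank K\"ahler form $\theta_s^{n-1}$. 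Feeding this into Statement~\ref{Holderboundpluripotentialtheory} with $p\to\infty$ (so $q\to 1^+$) and fibre dimension $n-1=2$ then yields the uniform estimate $\|\tilde{\psi}_s\|_{C^\alpha(\theta_s)}\leq C$ for every $\alpha<2/3$.

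The theorem of Section~\ref{Generalboundsondistancefunctions}, used in its decoupled-class form (since $\pi^*\omega_{Fubini}|_{\tilde{X}_s}\leq C\theta_s$), gives $d_{\pi^*\omega'}\leq C d_{\theta_s}^{\alpha/2}$ on $\tilde{X}_s$. The last step is to descend this bound through $\pi$ to a uniform inequality $d_{\omega'}\leq C d_{\omega_{Fubini}|_{X_y}}^{\alpha/2}$ on $X_y$, using that $\pi$ is a biholomorphism for $s\neq 0$ and matching $d_{\theta_s}$ to $d_{\omega_{Fubini}|_{X_{s^2}}}$ via the same local resolution-coordinate analysis; optimising over $x'$ then gives the diameter bound, and the case $y=0$ is already covered by the previous proposition. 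The hard part is the $L^\infty$ estimate on $h_s$: verifying that the order of vanishing of $(\pi^*\omega_{Fubini})^{n-1}/\theta_s^{n-1}$ along the exceptional $\mathbb{P}^1$ matches the $1/H$ pole exactly is a precise numerological coincidence enabled by the three-fold structure of the small resolution, and it is this coincidence that makes the argument intrinsically low-dimensional; a secondary technical point is the resolution-based matching of the two distance functions needed in the descent.
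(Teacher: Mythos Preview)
Your strategy --- base change $y=s^2$, small resolution of the resulting threefold node to obtain a smooth family, the $L^\infty$ bound on the volume density relative to a K\"ahler metric on the resolution, then Statement~\ref{Holderboundpluripotentialtheory} followed by the H\"older distance bound of Section~\ref{Generalboundsondistancefunctions} in its decoupled form --- coincides with the paper's. Two points, however, need correction.

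The equation $\pi^*\omega'=\theta_s+dd^c\tilde\psi_s$ with $\theta_s$ K\"ahler is cohomologically impossible uniformly in $s$: on the central fibre the class $[\pi^*\omega_{Fubini}]$ is only nef (it is trivial on the exceptional curve), so it lies on the boundary of the K\"ahler cone and cannot coincide with any $[\theta_0]$ coming from a K\"ahler form on $\tilde{\mathcal X}$. In the correct application of Statement~\ref{Holderboundpluripotentialtheory}, which the paper spells out, your K\"ahler $\theta_s$ plays the role of the ambient reference metric $\omega_y$ on the smooth family, while the semi\-positive form $\pi^*\omega_{Fubini}|_{\tilde X_s}$ is the class representative $\theta_y$; the potential is then $\pi^*\psi$, and your density bound $(\pi^*\omega')^2\le C\,\theta_s^2$ is precisely the required $L^\infty$ input. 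This class degeneration is exactly why the conjectural Statement~\ref{Holderboundpluripotentialtheory} (allowing semi\-positive $\theta_y$) must be invoked rather than the proven Theorem~A* of \cite{Ko}; you have the right computation but have misassigned the roles.

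The descent from $d_{\omega'}\le C\,d_{\theta_s}^{\alpha/2}$ to $d_{\omega'}\le C\,d_{\omega_{Fubini}|_{X_y}}^{\alpha/2}$ is not a ``matching'' of distances: the inequality $\omega_{Fubini}|_{X_y}\le C\theta_s$ goes only one way, and $d_{\theta_s}/d_{\omega_{Fubini}}$ is unbounded as $s\to 0$ in the direction transverse to the contracted curve. The paper instead exploits the ruling structure of the smooth quadric $\mathbb P^1\times\mathbb P^1$: along any line of a fixed ruling the two metrics \emph{are} uniformly equivalent, so the H\"older bound holds on such lines; since any two points are joined by a pair of lines, one from each ruling (obtained by using both small resolutions), the global estimate follows by the triangle inequality. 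Your diameter bound is unaffected by this, since $\text{diam}(\theta_s)\le C$ already suffices for that part.
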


\begin{proof}
We first make a base change to the square root fibration:
\[
\begin{tikzcd}
\{  Z_1^2+Z_2^2+Z_3^2=w^2 Z_0^2    \} \arrow{r}{} \arrow[swap]{d}{\tilde{f} } & \{ Z_1^2+Z_2^2+Z_3^2=yZ_0^2  \}\subset \C_y \times \mathbb{P}^3 \arrow{d}{f} \\
\C_w  \arrow{r}{y=w^2} & \C_y
\end{tikzcd}
\]
In reality we only consider small values of $y$, say $|y|\leq 1$.
The fibre product has a three-fold ODP singularity at the point $w=0, Z_1=Z_2=Z_3=0$. This admits a small resolution in a completely standard way:
\[
\tilde{\mathcal{X}} \to \{  Z_1^2+Z_2^2+Z_3^2=w^2 Z_0^2    \}.
\]
In the local piece $Z_0=1$, this family $\tilde{\mathcal{X}}  $ can be described explicity as 
\[
\mathcal{O}_{\mathbb{P} ^{1} }  (-1) \oplus \mathcal{O}_{\mathbb{P}^1 }  (-1) =\{ [U:V], (u_1, v_1)\otimes (u_2, v_2) |  Uv_1=Vu_1, (U,V)\neq (0,0)      \}.
\]
Here $u_1, u_2, v_1, v_2$ are not well defined by themselves, but $u_1u_2, u_1v_2, u_2v_1, u_2v_2$ are.
The resolution map is 
\[
Z_1+\sqrt{-1} Z_2=u_1u_2, w+Z_3=v_1u_2, w-Z_3=v_2 u_1, Z_1-\sqrt{-1} Z_2= v_1v_2.
\]

The point is that the family $\tilde{\mathcal{X}}\to \C_w$ has the same fibres as the square root fibration $\tilde{f}$, for $w\neq 0$, and makes the fibration a submersion. This means for $|y|\leq 1$ the smooth fibres of $f$ can be endowed with K\"ahler metrics $\omega_{ \tilde{\mathcal{X}}_w  }$ which have uniformly bounded geometry, for example using the restriction of some Fubini-Study metric on the quasiprojective variety $\tilde{\mathcal{X}}$. We shall from now on make identification of the fibres under the natural maps.

  Now we focus on $\{Z_0=1, |Z_1|^2+|Z_2|^2+|Z_3|^2 \leq 1\}$. It can be shown by simple explicit calculation that in this region
\begin{itemize}
\item $H$ is uniformly equivalent to $(|u_1|^2+|v_1|^2)(|u_2|^2+|v_2|^2)$.
\item The local potential of $\omega_{Fubini}|_{X_y}$ is smoothly bounded on the fibre $\tilde{\mathcal{X}}|_w $ with respect to the norms defined by $\omega_{\tilde{\mathcal{X}}_w } $.
\item  If $U=1, |V|\leq 1$, then the local coordinates on $ \tilde{\mathcal{X}}|_w     $ are given by $V$ and $u_1u_2$. We have \[\omega_{Fubini}|_{X_y} \leq C(\sqrt{-1}(d(u_1u_2) \wedge d\overline{u_1u_2}  + H \sqrt{-1} dV\wedge d\bar{V}  ) \leq C \omega_{ \tilde{\mathcal{X}}_w  }.\] If $V=1, |U|\leq 1$, then the local coordinates on $ \tilde{\mathcal{X}}|_w     $ are given by $U$ and $v_1v_2$. We have \[\omega_{Fubini}|_{X_y} \leq C( \sqrt{-1}d(v_1v_2) \wedge d\overline{v_1v_2} +   H \sqrt{-1} dU\wedge d\bar{U}  )\leq C \omega_{ \tilde{\mathcal{X}}_w  } .\]
\item The volume form $\omega'^2 \leq \frac{C}{H} \omega_{Fubini}^2|_{X_y} \leq C \omega_{ \tilde{\mathcal{X}}_w     }^2$. This can be seen more conceptually by noticing the small resolution restricted to the central fibre is crepant, and thus the holomorphic volume form on the local part of $X_{y=0}$ and on $\tilde{\mathcal{X}}_{w=0} $ can only differ by a nowhere vanishing holomorphic function.
\end{itemize}
All constants are of course meant to be independent of $y$ for $|y|\leq 1$.

Let us understand how the class of $[\omega_{Fubini}|_{X_y}]= [\omega']$ in Statement \ref{localreductiontoquadric2} compares with $[\omega_{ \tilde{\mathcal{X}}_w    }]$. Here the subtlety is that the smooth reference metric $\omega_{ \tilde{\mathcal{X}}_w    }$ cannot have the same K\"ahler class as $[\omega']$. In particular on the central fibre $\tilde{\mathcal{X}}_0$, the metric $\omega_{ \tilde{\mathcal{X}}_0    }$ is smooth and positive, while $\omega_{Fubini}|_{X_0}$ is a semi-positive smooth form pulled back from the orbifold $\{Z_1^2+Z_2^2+Z_3^2=0 \} $, whose class lies on the boundary of the K\"ahler cone. This subtlety means the uniform H\"older bound on the potential, Theorem A* in \cite{Ko}, no longer applies, and one is forced to envoke a generalised conjectural statement \ref{Holderboundpluripotentialtheory}.

Now given the volume estimate
$
\omega'^2 \leq C\omega_{ \tilde{\mathcal{X}}_w     }^2,
$
by the method in section \ref{Generalboundsondistancefunctions} (\cf also the remarks there), we have now a uniform H\"older bound on the distance function of $\omega'$ for all smooth fibres, with respect to the background metric $\omega_{\tilde{\mathcal{X}}_w   }$. Here $L^p=L^\infty$, and $q=1$. The H\"older exponent $\alpha$ can be taken to be any positive number smaller than  $\frac{2}{3}$. This means for any $\alpha/2 < \frac{1}{3}$, there is some uniform constant, for which 
\[
d_{ \omega'  }(x, x') \leq C d_{ \omega_{\tilde{\mathcal{X}}_w   }   } (x, x')^{\alpha/2}.
\]
In particular this proves the uniform diameter bound. 

We can be a little more accurate on the H\"older exponent, although this shall not be required in the rest of this paper. Geometrically, each smooth quadric is isomorphic to $\mathbb{P}^1 \times \mathbb{P}^1$, and the level sets of $\frac{U}{V}$ are just lines in a preferred ruling. But along such lines, the metric $\omega_{Fubini}|_{X_y}$ is uniformly equivalent to $\omega_{\tilde{\mathcal{X}}_w   }$, so 
\[
d_{ \omega'  }(x, x') \leq C d_{ \omega_{Fubini}|_{X_y   }   } (x, x')^{\alpha/2}
\] if $x, x'$ are on the same line. Now we observe that we could equally well have chosen the other small resolution and work with a different ruling, which gives the same estimate. Since we can join any two points by two lines, one from each ruling, we see that this H\"older estimate holds for any two points on $X_y$.
\end{proof}

\begin{thm} Assume Statement \ref{Holderboundpluripotentialtheory}.
The conjecture \ref{uniformfibrediameterboundconjecture} holds for the special case $n=3$, in particular for a K3-fibred Calabi-Yau 3-fold with at worst nodal fibres. Morever for every exponent
$\alpha/2<\frac{1}{3}$,
there is a uniform H\"older bound
\[
d_{ \frac{1}{t} \tilde{\omega}_t|_{X_y}  }(x, x') \leq C d_{ \omega_X|_{X_y   }   } (x, x')^{\alpha/2}
\]
for any $x, x' \in X_y$, where the estimate is uniform in all fibres.
\end{thm}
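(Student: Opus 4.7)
The plan is to reduce the estimate on the actual Calabi-Yau metric to the just-established Statement \ref{localreductiontoquadric2} near each nodal point of a singular fibre, and to combine with the smooth control of Corollary \ref{smoothboundfibrewise} on the region where the fibration is submersive. Since the base is one-dimensional and the fibration is Lefschetz, each fibre $X_y$ carries only finitely many nodes (bounded by a topological invariant of the fibration), so it suffices to produce uniform control in a neighbourhood of each node together with uniform control on the complement.

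The hypotheses of Statement \ref{localreductiontoquadric2} applied to $\omega' := \frac{1}{t}\tilde{\omega}_t|_{X_y}$ will be verified as follows. On a fibre, $\omega_t|_{X_y} = t\,\omega_X|_{X_y}$, hence $\omega' = \omega_X|_{X_y} + dd^c(\phi/t)$, and the volume bound $\omega'^{2} \leq \frac{C}{H}\,\omega_X^{2}|_{X_y}$ is immediate from Corollary \ref{upperboundonmetric}. The oscillation estimate $\text{osc}(\phi/t) \leq C$ is precisely (\ref{fibrepotentialoscillation}), the fibrewise $L^2$ gradient estimate is (\ref{L2gradientestimate}), and smooth bounds on $\phi/t$ away from the nodes come from Proposition \ref{smoothboundawayfromsingularity} and Corollary \ref{smoothboundfibrewise}. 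Using the embedding trick described after Example \ref{quadricfamily}, I would holomorphically identify a small neighbourhood of each node in $X_y$ with a subdomain of the compact quadric model in $\mathbb{P}^3$, under which $\omega_X|_{X_y}$ is comparable to $\omega_{Fubini}|_{X_y}$; a suitable cutoff then extends $\omega'$ from this neighbourhood to a metric on the whole quadric fibre satisfying the hypotheses of Statement \ref{localreductiontoquadric2} uniformly in $t$ and $y$.

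Invoking the previous proposition then yields a uniform diameter bound and the Hölder estimate $d_{\omega'}(x,x') \leq C\, d_{\omega_{Fubini}|_{X_y}}(x,x')^{\alpha/2}$ for $\alpha/2 < 1/3$ in the quadric model. To globalise the Hölder bound to the K3 fibre $X_y$ with respect to $\omega_X|_{X_y}$, I would argue on two scales: in a small neighbourhood of each of the finitely many nodes the estimate above applies directly, while on the complement $\omega'$ is smoothly equivalent to $\omega_X|_{X_y}$ by Corollary \ref{smoothboundfibrewise}, so $d_{\omega'}$ and $d_{\omega_X|_{X_y}}$ are Lipschitz equivalent there. Any two points of $X_y$ may be joined by a path crossing a bounded number of such pieces, and the triangle inequality stitches the local Hölder estimates together into the desired global bound.

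The main obstacle is the careful book-keeping in the reduction to the quadric model: verifying that the cutoff extension preserves the full list of hypotheses of Statement \ref{localreductiontoquadric2} in a manner uniform in $t$ and $y$ (including the averaging used to control $\phi/t$), and that the pluripotential-theoretic H\"older estimate underlying the previous proposition (which rests on Statement \ref{Holderboundpluripotentialtheory}) survives the identification with $(X_y,\omega')$. Once this identification is set up, no further analytic input is needed, and the theorem follows by assembling the local quadric estimate with the smooth control established in Section \ref{smoothcontrolawayfromcriticalpoints}.
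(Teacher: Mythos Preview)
Your proposal is correct and follows exactly the route the paper intends: the theorem is not given an independent proof in the paper but is stated as an immediate consequence of the reduction to the quadric family (Statement \ref{localreductiontoquadric2}) combined with the preceding proposition establishing that statement for $n=3$, and your write-up simply makes explicit the verification of hypotheses and the patching with the smooth region that the paper leaves to the reader. The only cosmetic difference is that you spell out the globalisation of the H\"older bound via the two-scale decomposition, whereas the paper regards this as absorbed into the equivalence of Statements \ref{localreductiontoquadric} and \ref{localreductiontoquadric2} together with the comparability of $\omega_X|_{X_y}$ and $\omega_{Fubini}|_{X_y}$ near the node.
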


\section{Limit of fibrewise metrics}

\subsection{Convergence behaviour over one dimensional base}\label{convergenceoveronedimensionalbasesection}

Let $t$ be small throughout the discussion. In the presence of smooth bounds (cor \ref{smoothboundfibrewise}), as long as we can show a convergence bound on the normalised potential on the fibre, we have a smooth convergence result for fibre metrics. This section is a step in this direction. To avoid excessive complication of conditions, the reader is encouraged to primarily think of a Lefschetz fibration with $n\geq 3$, and assume the uniform diameter bound (\ref{uniformfibrediameter}). The crucial feature we need is the one dimensional base condition. In fact, if we stay away from the singular fibres, then no assumption on the nature of singularity is needed for our arguments.

 %assuming the base $Y$ is one dimensional,  and the
%fibration satisfies the volume condition \ref{volumecondition}, with the uniform diameter bound \ref{uniformfibrediameter}.  We will assume all these to make the estimates uniform on all smooth fibres. If we work away from the singular fibres, then only the one dimensional base assumption is needed.

%For the conclusions which involve comparing $\frac{1}{t}\tilde{\omega_t}|_{X_y}$ with $\omega_{SF,y}$ on the singular fibre itself, we assume further that the local structure near the singular fibres are just smoothing of Calabi-Yau varieties, so in particular $\omega_{SF,y}$ is defined on the singular fibres (\cf \cite{EGZ}), and the fibrewise Calabi-Yau metric $\omega_{SF,y}$ converges smoothly on the smooth locus as $y$ approaches a critical value (\cf \cite{Zhang}, theorem 1.4). 

%we make some more technical assumptions to make contact with the literature. To make sense of $\omega_{SF,y}$ on the singular fibre, let us assume that all singular fibres are Calabi-Yau varieties. To ensure smooth convergence away from critical points of $\omega_{SF,y}$ as $X_y$ approaches the singular fibre, we further assume $f$ is flat, so the local structure of the fibration near a singular fibre is a smoothing, as in the setup of \cite{Zhang}.}

We fix a fibre $X_y$. Consider a coordinate ball $B(y, R)$ on the base $Y$.
Let $\omega_{Y,y}$ be the Euclidean metric on $B(y,R)$, and at the centre $y\in Y$, we have the normalisation $\omega_{Y,y}|_y=f_*(i^{n^2}\Omega\wedge \overline{ \Omega})|_y$. This is controlled by the base metric $\omega_Y$ by condition (\ref{volumecondition}).

\begin{prop}\label{concentrationestimate}
Assume the base is one dimensional, and the volume condition (\ref{volumecondition}) and the uniform fibre diameter bound (\ref{uniformfibrediameter}) hold, then we have a concentration estimate for $\Tr_{\tilde{\omega_t}} \omega_{Y,y}$, which holds uniformly for all $y\in Y$:
\begin{equation}
\max_{|f-y|\leq t^{1/2}} \Tr_{\tilde{\omega_t}}\omega_{Y,y}
\leq \frac{n}{a_t}+C/|\log t|,
\end{equation}
\begin{equation}
t^{-n}\norm{    \Tr_{\tilde{\omega_t}}\omega_{Y,y} -\frac{n}{a_t}       } _{L^1_{\tilde{\omega_t}}(|f-y|\leq t^{1/2})}  \leq \frac{C}{|\log t|}.
\end{equation}
\end{prop}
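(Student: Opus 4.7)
My plan is to combine a Chern--Lu type subharmonicity for the horizontal trace with a cohomological computation exploiting the one-dimensionality of the base, then extract the pointwise bound by a mean value inequality made possible by the uniform fibre diameter bound.

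First, I would apply the Chern--Lu formula to the holomorphic map $f : (X,\tilde{\omega}_t)\to (Y,\omega_{Y,y})$. Because $\tilde{\omega}_t$ is Ricci-flat and $\omega_{Y,y}$ is Euclidean hence of zero bisectional curvature, this yields $\Lap_{\tilde{\omega}_t}\log F\geq 0$ where $F := \Tr_{\tilde{\omega}_t}(f^*\omega_{Y,y})$, valid away from the critical locus and interpreted distributionally on $X$ (using that $F$ is uniformly bounded from above by Corollary \ref{upperboundonmetric} combined with the remark that $\omega_{Y,y}$ and $\omega_0$ are uniformly equivalent).

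Second, I would compute the average of $F$ over the tube $\Omega_t := \{|f-y|\leq t^{1/2}\}$. Here the one-dimensionality of the base is decisive: $(f^*\omega_{Y,y})\wedge\omega_0 = 0$ and $\omega_0^2 = 0$, so expanding $\omega_t^{n-1}=(\omega_0+t\omega_X)^{n-1}$ and dropping the terms killed by $f^*\omega_{Y,y}$ leaves only $t^{n-1}f^*\omega_{Y,y}\wedge\omega_X^{n-1}$. The difference $f^*\omega_{Y,y}\wedge(\tilde{\omega}_t^{n-1}-\omega_t^{n-1})$ is $d^c$-exact (since $f^*\omega_{Y,y}$ is closed), so Stokes reduces it to a boundary integral over $\partial\Omega_t=\{|f-y|=t^{1/2}\}$, controllable in terms of the fibrewise oscillation bound (\ref{fibrepotentialoscillation}), the $L^2$ gradient estimate (\ref{L2gradientestimate}), and Corollary \ref{upperboundonmetric}. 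Combining with Fubini and the normalisation $\int_{X_{y'}}\omega_X^{n-1}=1$, and dividing by
\[
\int_{\Omega_t}\tilde{\omega}_t^n = a_t t^{n-1}\int_{|y'-y|\leq t^{1/2}}f_*(i^{n^2}\Omega\wedge\overline{\Omega}),
\]
together with the normalisation $\omega_{Y,y}|_y=f_*(i^{n^2}\Omega\wedge\overline{\Omega})|_y$, yields
\[
\mathrm{avg}_{\Omega_t} F \;=\; \frac{n}{a_t}\;+\;O\!\left(\frac{1}{|\log t|}\right).
\]

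Third, to pass from the average to a pointwise bound, I would invoke that by the uniform fibre diameter bound the rescaled region $(\Omega_t,\, t^{-1}\tilde{\omega}_t)$ has uniformly bounded geometry and is non-collapsed, with $\tilde{\omega}_t$ Ricci-flat. Applying the Moser mean value inequality to the subharmonic function $\log F$ on this region, and then Jensen, gives $\sup_{\Omega_t}F\leq (1+o(1))\cdot\mathrm{avg}_{\Omega_t}F$, which yields the claimed sup bound. The $L^1$ estimate is then immediate: with the sup bound and the average identity in hand,
\[
t^{-n}\int_{\Omega_t}\left|F-\frac{n}{a_t}\right|\tilde{\omega}_t^n \;\leq\; 2t^{-n}\int_{\Omega_t}\left(\frac{n}{a_t}+\frac{C}{|\log t|}-F\right)\tilde{\omega}_t^n \;\leq\; \frac{C'}{|\log t|},
\]
since the integrand in the middle is non-negative.

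The hard part will be Step two: extracting the precise $1/|\log t|$ error requires care with both the Stokes boundary contribution (which must be estimated using the collapsed-metric gradient bound for $\phi/t$) and, more fundamentally, the modulus of continuity of the Weil--Petersson--type density $f_*(i^{n^2}\Omega\wedge\overline{\Omega})$ at critical values of $f$. In the Lefschetz K3 case relevant to this paper the latter has at worst logarithmic behaviour near nodal fibres, and this is the true source of the rate $1/|\log t|$.
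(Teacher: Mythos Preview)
Your Step~3 contains a genuine gap, and it is precisely where the paper's key idea enters. The Moser mean value inequality for a subharmonic function $u$ on a non-collapsed Ricci-flat ball gives $\sup_{B} u \leq C\cdot \mathrm{avg}_{2B}\,u$ for a \emph{fixed} dimensional constant $C>1$; it does \emph{not} give $\sup F \leq (1+o(1))\cdot \mathrm{avg}\,F$. From subharmonicity of $\log F$ and bounded geometry alone you can only conclude $\sup F \leq C\cdot n/a_t$, which is already known from Proposition~\ref{omega0isbounded} and is far weaker than the claimed $n/a_t + C/|\log t|$.

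The missing mechanism is a three-circle inequality. In the paper, the one-dimensionality of the base is used not (only) in the average computation, but to supply the \emph{harmonic} comparison function $\log|f-y|$: since $\log F$ is $\tilde{\omega}_t$-subharmonic and $\log|f-y|$ is pluriharmonic, the maximum principle on the annulus $t^{1/2}\leq |f-y|\leq R$ (with $R$ fixed) yields
\[
\max_{|f-y|\leq Dt^{1/2}}\log F \;-\; \max_{|f-y|\leq t^{1/2}}\log F \;\leq\; \frac{\log D}{\log(R/t^{1/2})}\cdot O(1) \;=\; O\!\left(\frac{1}{|\log t|}\right),
\]
i.e.\ the interior maximum is almost achieved. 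A Harnack inequality for the nonnegative superharmonic function $v = \max\log F - \log F$ then forces $v$ to be small in $L^1$, and comparing with the exactly computed average gives both estimates. Thus the rate $1/|\log t|$ is produced by the large conformal modulus of the annulus, not (as you suggest) by the modulus of continuity of $f_*(i^{n^2}\Omega\wedge\overline{\Omega})$. Your average computation in Step~2 is in fact simpler than you indicate: since $[\tilde{\omega}_t|_{X_{y'}}] = t[\omega_X|_{X_{y'}}]$, the fibre integral $\int_{X_{y'}}\tilde{\omega}_t^{n-1}=t^{n-1}$ is exact with no Stokes boundary term, and the average is $n/a_t$ essentially on the nose.
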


\begin{proof}
By the Chern-Lu formula, using the fact that $\omega_{Y,y}$ has zero curvature, we have
\[
\Lap_{\tilde{\omega_t}} (\log \Tr_{\tilde{\omega_t} }\omega_{Y,y}  ) \geq  0.
\]
The rest of the argument is essentially an effective version of the fact that $\C$ cannot support bounded subharmonic functions.

Notice that $\log |f-y| $ is a pluriharmonic function on $X\setminus X_y$, so in particular is harmonic with respect to the Laplacian $\Lap_{\tilde{\omega_t}}$. In particular we can compare the function $\log \Tr_{\tilde{\omega_t} }\omega_{Y,y} $ with $C \log |f-y|+C'$ using the maximum principle to achieve the following three circle type inequality:
\begin{equation*}
\begin{split}
\max_{|f-y|=D t^{1/2}} \log \Tr_{\tilde{\omega_t} }\omega_{Y,y} -\max_{|f-y|=t^{1/2}} \log \Tr_{\tilde{\omega_t} }\omega_{Y,y} \\
\leq \frac{\log D}{\log R/t^{1/2}}(\max_{|f-y|=R} \log \Tr_{\tilde{\omega_t} }\omega_{Y,y} -\max_{|f-y|=t^{1/2}} \log \Tr_{\tilde{\omega_t} }\omega_{Y,y})
\end{split}
\end{equation*}
where $D$ is a large constant to be chosen later.
Now notice by the bound in proposition \ref{omega0isbounded}, we have $\log  \Tr_{\tilde{\omega_t} }\omega_{Y,y}\leq C$. Morever, since by the volume condition (\ref{volumecondition}), on any fibre near $X_y$ there is some region where $H\geq C$ and therefore $\omega_t$ is uniformly equivalent to $\tilde{\omega_t}$, we see
$\max_{|f-y|=t^{1/2}} \log \Tr_{\tilde{\omega_t} }\omega_{Y,y} \geq C$ as well. Therefore we obtain the following inequality, which says on the region $\{|f-y|\leq Dt^{1/2}\}$ the maximum is almost achieved in the interior for the subharmonic function $\log \Tr_{\tilde{\omega_t} }\omega_{Y,y}$:
\begin{equation}
\max_{|f-y|\leq Dt^{1/2}} \log \Tr_{\tilde{\omega_t} }\omega_{Y,y} -\max_{|f-y|\leq t^{1/2}} \log \Tr_{\tilde{\omega_t} }\omega_{Y,y} \leq \frac{C}{|\log t|}.
\end{equation}
We want to use this to conclude the function $\log \Tr_{\tilde{\omega_t} }\omega_{Y,y}$ is close to a constant.

Now we observe that the region $\{|f-y|
\leq D t^{1/2}\}$ with the metric $\tilde{\omega_t}$ satisfies the local volume noncollapsing estimate (\ref{localvolumenoncollapsing2}). Since the metric $\tilde{\omega_t} $ is Ricci flat, it satisfies the scale invariant Sobolev inequality and the Poincar\'e inequality on the region $\{|f-y|\leq Dt^{1/2}\}$ (\cf \cite{Hein}). Morever, using the bounds on the metric $\tilde{\omega_t}$ we can arrange the constant $D$ to be large enough, such that there is a point $P\in X_y$ and a radius $r$ with \[
\{|f-y|\leq t^{1/2} \} \subset B_{\tilde{\omega_t}}(P,r)\subset B_{\tilde{\omega_t}}(P,4r) \subset \{|f-y|\leq Dt^{1/2} \}
\]
We have the following Harnack inequality (see \cite{HanLin}, Page 83-89 for a proof which only relies on Sobolev and Poincar\'e inequalities):
%The proof of the Harnack inequality for superharmonic functions in \cite{GilbargTrudinger} relies essentially only on the Sobolev, Poincar\'e and the John-Nirenberg inequality. Hence
\begin{lem}
If $v$ is a nonnegative superharmonic function in $B(P,4r)$, then for $1\leq p<2n/(2n-2)$, we have a constant independent of $r, v$, such that
\begin{equation}
r^{-2n/p}\norm{v}_{L^p(B_{\tilde{\omega}_t}(P,2r))}\leq C\inf_{B(P,r)}v.
\end{equation}
\end{lem}
Apply this to the function $v=-\log \Tr_{\tilde{\omega}_t}\omega_{Y,y} +\max_{|f-y|\leq Dt^{1/2}} \log \Tr_{\tilde{\omega}_t }\omega_{Y,y} $, we get
\[
r^{-2n}\norm{v}_{L^1(B_{\tilde{\omega}_t}(P,2r))}\leq \frac{C}{|\log t|}.
\]
In particular
\[
t^{-n}\norm{v}_{L^1_{\tilde{\omega}_t}(|f-y|\leq t^{1/2})} \leq \frac{C}{|\log t|}.
\]
The above estimate can be thought of as an effective version of the strong maximum principle.

Since $v\geq 0$, we have $v\geq C(1-e^{-v})$, and using also proposition \ref{omega0isbounded}, we deduce there is an $L^1$ bound
\[
t^{-n}\norm{   - \Tr_{\tilde{\omega}_t}\omega_{Y,y} +\max_{|f-y|\leq t^{1/2}} \Tr_{\tilde{\omega}_t}\omega_{Y,y}       } _{L^1_{\tilde{\omega}_t}(|f-y|\leq t^{1/2})}  \leq \frac{C}{|\log t|}.
\]
This says the deviation of the maximum from the average is small of order $O(\frac{1}{|\log t|})$.
But the average value of $\Tr_{\tilde{\omega}_t}\omega_{Y,y}$ is known:
\[
f_*((\Tr_{\tilde{\omega}_t}\omega_{Y,y}) \tilde{\omega}_t^{n})=n f_*(\tilde{\omega}_t^{n-1}\wedge \omega_{Y,y})=nt^{n-1}f_*(\omega_X^{n-1})\wedge \omega_{Y,y}=nt^{n-1}\omega_{Y,y},
\]
So \[
\int_{|f-y|\leq t^{1/2} }\Tr_{\tilde{\omega}_t}\omega_{Y,y}\tilde{\omega}_t^{n}=n t^{n-1} Vol_{\omega_{Y,y}}(B(y,t^{1/2})).
\]
Observe also 
\[
\int_{|f-y|\leq t^{1/2}} \tilde{\omega}_t^{n}=a_t t^{n-1} \int_{|f-y|\leq t^{1/2}} i^{n^2}\Omega\wedge \overline{\Omega}=a_t t^{n-1} Vol_{\omega_{Y,y}}(B(y,t^{1/2})),
\]
where the $a_t$ is the volume normalising constant in the Monge-Amp\`ere equation, and we used the particular choice of normalisation for $\omega_{Y,y}$. Therefore the average value of $\Tr_{\tilde{\omega}_t}\omega_{Y,y}$ is $\frac{n}{a_t}$, and the estimate follows.
\end{proof}

In the following computation we need to make use of $\omega_{SF,y}$ on the singular fibres. To ensure this is well defined, we shall assume the singular fibres are Calabi-Yau varieties. (\cf \cite{EGZ}).

Now we observe that the fibrewise volume form can be expressed in terms of $\Tr_{\tilde{\omega}_t}\omega_{Y,y}$, by the following computation:
\[
\frac{\tilde{\omega}_t|_{X_y}^{n-1}} { \omega_{SF,y}^{n-1 }   }
=\frac{\tilde{\omega}_t^{n-1}\omega_{Y,y}   } { \omega_{SF,y}^{n-1 }\omega_{Y,y}   }
=\frac{\Tr_ { \tilde{\omega}_t   }{ \omega_{Y,y}  }     \tilde{\omega}_t^{n}  } {n \omega_{SF,y}^{n-1 }\omega_{Y,y}   }
=\frac{a_t t^{n-1}\Tr_ { \tilde{\omega}_t   }{ \omega_{Y,y}  }     i^{n^2} \Omega \wedge \overline{\Omega}  } {n \omega_{SF,y}^{n-1 }\omega_{Y,y}   },
\]
and using the adjunction formula description of $\omega_{SF,y}$, the quantity $\frac{     i^{n^2} \Omega \wedge \overline{\Omega}  } { \omega_{SF,y}^{n-1 }\omega_{Y,y}   }$ is in fact constant on the fibre, so equals the constant $\frac{  f_*( i^{n^2} \Omega \wedge \overline{\Omega}   )  }{  \omega_{Y,y}  }=1
$ at the point $y$. Hence we have the formula for the fibrewise volume:
\begin{equation}
\frac{\tilde{\omega}_t|_{X_y}^{n-1}} { \omega_{SF,y}^{n-1 }   }=\frac{a_t t^{n-1}\Tr_ { \tilde{\omega}_t   }{ \omega_{Y,y}  }     } {n }
\end{equation}
Comparing this with the concentration estimate (proposition \ref{concentrationestimate}), we see that the normalized fibre volume form $(\frac{\tilde{\omega}_t|_{X_y} }{t})^{n-1} $ with respect to volume form of the background metric $\omega_{SF,y}$ is almost bounded above by one, and satisfies an $L^1$ concentration estimate. Now recall we introduced a function $v$ in our proof of the fibrewise oscillation estimate (\ref{fibrepotentialoscillation}), defined by $\frac{1}{t}\tilde{\omega}_t|_{X_y}=\omega_{SF,y}+dd^c v$. The standard manipulation for the Monge-Amp\`ere equation using integration by part yields
\[
\int_{X_y} v( \omega_{SF,y}^{n-1 } - \ (\frac{1}{t}\tilde{\omega}_t|_{X_y})^{n-1}         ) \geq C\int_{X_y} |\nabla v|^2_{\omega_{SF,y} } {\omega_{SF,y} }^{n-1}.
\]
But the oscillation of $v$ is bounded in $L^{\infty}$, so in fact we obtain the following
\begin{prop}Assume the base is one dimensional, the volume condition (\ref{volumecondition}) and the uniform fibre diameter bound (\ref{uniformfibrediameter}) hold, and all singular fibres are Calabi-Yau varieties.
The fibrewise potential has the convergence estimate
\begin{equation}
\int_{X_y} |\nabla v|^2_{\omega_{SF,y} } {\omega_{SF,y} }^{n-1} \leq \frac{C}{|\log t|}.
\end{equation}
\end{prop}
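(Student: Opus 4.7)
The plan is to deduce the estimate directly from the integration-by-parts identity
\[
\int_{X_y} v\bigl(\omega_{SF,y}^{n-1} - (\tfrac{1}{t}\tilde{\omega}_t|_{X_y})^{n-1}\bigr) \geq C\int_{X_y} |\nabla v|^2_{\omega_{SF,y}} \omega_{SF,y}^{n-1}
\]
stated in the paragraph preceding the proposition, by bounding the left-hand side using only the pointwise upper bound half of Proposition \ref{concentrationestimate}. Since $\omega_0$ is pulled back from $Y$, the restriction $\tfrac{1}{t}\tilde{\omega}_t|_{X_y}$ lies in the class $[\omega_X|_{X_y}]$, so the two measures $\omega_{SF,y}^{n-1}$ and $(\tfrac{1}{t}\tilde{\omega}_t|_{X_y})^{n-1}$ have the same total mass on $X_y$. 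Consequently the left-hand side is unaffected by shifting $v$ by a constant, and after normalising $v$ to have zero mean with respect to $\omega_{SF,y}^{n-1}$ the fibrewise oscillation bound (\ref{fibrepotentialoscillation}) forces $|v|\leq C$ uniformly on $X_y$.

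Using the fibrewise volume identity derived just before the proposition,
\[
\frac{\bigl(\tfrac{1}{t}\tilde{\omega}_t|_{X_y}\bigr)^{n-1}}{\omega_{SF,y}^{n-1}} = \frac{a_t}{n}\Tr_{\tilde{\omega}_t}\omega_{Y,y},
\]
the left-hand side is dominated by $C\int_{X_y}\bigl|1 - \tfrac{a_t}{n}\Tr_{\tilde{\omega}_t}\omega_{Y,y}\bigr|\omega_{SF,y}^{n-1}$, so it suffices to bound this $L^1$ norm by $C/|\log t|$. The key observation is that the equality of total masses of the two volume forms on $X_y$ is equivalent to
\[
\int_{X_y}\Bigl(\frac{a_t}{n}\Tr_{\tilde{\omega}_t}\omega_{Y,y} - 1\Bigr)\omega_{SF,y}^{n-1} = 0,
\]
so the positive and negative parts of the integrand have equal $L^1$ norm. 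The positive part is pointwise controlled: since $X_y\subset\{|f-y|\leq t^{1/2}\}$, Proposition \ref{concentrationestimate} yields the pointwise inequality $\tfrac{a_t}{n}\Tr_{\tilde{\omega}_t}\omega_{Y,y} - 1 \leq C/|\log t|$ on all of $X_y$. Consequently both the positive and negative parts are $L^1$-bounded by $C/|\log t|$, and the proposition follows.

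I expect the main conceptual obstacle to be the temptation to use the $L^1$ half of Proposition \ref{concentrationestimate}, which is recorded with respect to $\tilde{\omega}_t^n$ integrated over the $t^{1/2}$-tube in the base; converting that into a sharp bound on the single fibre $X_y$ with respect to $\omega_{SF,y}^{n-1}$ appears technically awkward because $\tilde{\omega}_t^n$ couples the base and fibre directions. The plan above avoids this by using only the pointwise $L^\infty$ half of the concentration estimate together with the equal-mass constraint coming from cohomology. A minor point to be careful about is the validity of the integration-by-parts identity on singular Calabi-Yau fibres, which should follow by working on the smooth locus and using the Calabi-Yau variety hypothesis to handle the singular set, consistent with the framework of \cite{EGZ}.
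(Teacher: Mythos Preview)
Your proof is correct and follows the same approach as the paper: bound the left side of the integration-by-parts identity using the $L^\infty$ oscillation bound on $v$ together with an $L^1$ bound on the fibrewise volume-form deviation. The paper's one-line argument asserts without detail that the normalised fibre volume density ``is almost bounded above by one, and satisfies an $L^1$ concentration estimate''; your equal-mass trick is exactly what justifies this fibrewise $L^1$ claim, and you are right that the tube $L^1$ half of Proposition~\ref{concentrationestimate} is not needed here.
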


\begin{rmk}
The potential $v$ has some ambiguity in its normalisation. One of the possibilities is $\int_{X_y} v\omega_{SF,y}^{n-1}=0$. All other reasonable choices are equivalent for the purpose of estimation, and do not change the metric anyway.
\end{rmk}

\begin{thm}
In the setup above, away from the critical points, \ie in the region $H\geq C$, the normalised fibrewise metric $\frac{1}{t}\tilde{\omega}_t|_{X_y}$ converges smoothly to $\omega_{SF,y}$ with constructive estimates to all orders: for any small $\epsilon>0$,
\begin{equation}\label{fibrewisesmoothconvergence}
\norm{\nabla^{(k)}_{\omega_X} (\omega_{SF,y}-\frac{1}{t}   \tilde{\omega}_t|_{X_y} )  }_{ {X_y}\cap \{H\geq C \}} \leq \frac{C(k, \epsilon)}{ |\log t|^{1/2-\epsilon}  }.
\end{equation}
\end{thm}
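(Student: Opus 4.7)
The plan is to upgrade the weak $L^2$ control on the normalised fibrewise potential $v$, coming from the previous proposition, to smooth convergence with a logarithmic rate, by combining it with the uniform $C^k$ bounds of Corollary \ref{smoothboundfibrewise} via interpolation.

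After normalising $v$ so that $\int_{X_y} v\,\omega_{SF,y}^{n-m}=0$, I would first pass from the $L^2$ gradient bound to an $L^2$ bound on $v$ itself using the Poincaré inequality on $(X_y,\omega_{SF,y})$. Because $\omega_{SF,y}$ is Ricci-flat with unit volume and uniformly bounded diameter by (\ref{diameterforsemiflat}), its Poincaré constant is uniform in $y$, giving
\[
\|v\|_{L^2(X_y,\omega_{SF,y}^{n-m})} \leq \frac{C}{|\log t|^{1/2}}.
\]

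Next, I would observe that on the region $\{H\geq C\}$ the form $dd^c v = \frac{1}{t}\tilde{\omega}_t|_{X_y}-\omega_{SF,y}$ has uniform smooth bounds: Corollary \ref{smoothboundfibrewise} controls the first term, and the adjunction formula $\omega_{SF,y}^{n-m}=F_y^{-1}i^{(n-m)^2}\Omega_y\wedge\overline{\Omega_y}$, together with the uniform bound on $F_y$ and the uniform positivity of $H$, makes $\omega_{SF,y}$ uniformly smoothly comparable to $\omega_X|_{X_y}$ in this region. Interior elliptic regularity applied to $\Delta_{\omega_{SF,y}} v$ (using the pointwise $C^k$ bound on $dd^c v$ and the $L^2$ bound on $v$) then yields uniform $C^N$ bounds on $v$ on any slightly smaller region $\{H\geq C'\}$, for every fixed $N$.

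Finally, I would apply the Gagliardo-Nirenberg interpolation
\[
\|v\|_{C^{k+2}(\{H\geq C''\})} \leq C(k,N)\,\|v\|_{L^2}^{1-\theta}\,\|v\|_{C^{N}(\{H\geq C'\})}^{\theta},
\]
valid with uniform constants by the uniform geometry of $(X_y,\omega_{SF,y})$, where $\theta=\theta(k,N)\to 0$ as $N\to\infty$. Given $\epsilon>0$ and $k$, I choose $N=N(k,\epsilon)$ large enough that $\theta<2\epsilon$; inserting the two previous bounds and then applying $dd^c$ gives the claimed rate $|\log t|^{-(1/2-\epsilon)}$ in $C^k$, the nested shrinking of the regions being absorbed by adjusting the threshold constant.

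The main obstacle is ensuring that every constant above — the Poincaré constant of $\omega_{SF,y}$, the interior elliptic constants, and the interpolation constant — is genuinely uniform in $y$ as $y$ approaches the singular set $S$. This is where the uniform diameter bound (\ref{diameterforsemiflat}) on $\omega_{SF,y}$, the uniform $L^\infty$ bound on $F_y$, and the assumption that singular fibres are Calabi-Yau varieties all enter: together they make the fibres $(X_y,\omega_{SF,y})$ uniformly "thick" on $\{H\geq C\}$ and uniformly smoothly comparable across $y$, so that the local elliptic and interpolation theory applies with $y$-independent constants.
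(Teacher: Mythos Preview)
Your proposal is correct and follows essentially the same route as the paper. The paper's own proof is extremely terse --- it simply records that $\omega_{SF,y}$ is smoothly equivalent to $\omega_X$ on $\{H\geq C\}$, invokes the $L^2$ gradient bound $\|\nabla v\|_{L^2}^2=O(1/|\log t|)$ from the previous proposition together with the smooth bounds of Corollary \ref{smoothboundfibrewise}, and then declares the interpolation step ``a relatively standard fact''; your write-up supplies precisely those standard details (Poincar\'e, interior elliptic regularity, Gagliardo--Nirenberg with $\theta\to 0$ as $N\to\infty$) and correctly identifies where the uniformity in $y$ enters.
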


\begin{proof}
Away from the critical points $\omega_{SF,y}$ is smoothly equivalent to $\omega_X$. Now given the convergence estimate $\norm{\nabla v}_{L^2}^2 =O(\frac{1}{|\log t|})$, and the smooth bound in corollary \ref{smoothboundfibrewise}, it is a relatively standard fact that the higher order derivatives of $v$ are small to any given order. The precise effective version is the above inequality (\ref{fibrewisesmoothconvergence}).
Notice the argument works even on part of the singular fibre.
\end{proof}

Due to its importance, we separately state

\begin{thm}
If the base is one dimensional, without conditions on types of singularities, then away from the critical locus $S$, the normalised fibrewise metric $\frac{1}{t}\tilde{\omega}_t|_{X_y}$ converges smoothly to $\omega_{SF,y}$ with constructive estimates to all orders.
\end{thm}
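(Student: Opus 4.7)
The plan is to rerun the arguments of Section~5.1 verbatim, but on a compact subset $K \Subset Y\setminus S$, observing that all the inputs used there are automatically available once we stay away from the critical value set, without having to impose the volume condition \eqref{volumecondition} or the fibre diameter bound \eqref{uniformfibrediameter}. The starting remark in Section~3 already flags this possibility: Tosatti's original Chern-Lu argument in \cite{To}, which in general blows up doubly exponentially near $S$, still yields a uniform two-sided bound $C^{-1}(\omega_0+t\omega_X)\le \tilde{\omega}_t \le C(\omega_0+t\omega_X)$ over any $f^{-1}(K)$, and on such a set the function $H$ of Corollary~\ref{upperboundonmetric} is bounded below. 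Consequently Proposition~\ref{smoothboundawayfromsingularity} and Corollary~\ref{smoothboundfibrewise} apply, yielding smooth bounds on $\frac{1}{t}\tilde{\omega}_t|_{X_y}$ in the $\omega_X$ metric that are uniform in $t$ and in $y\in K$.

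Next I would check that the two ingredients specific to Section~5.1, namely (i) a uniform fibrewise diameter bound and (ii) local volume non-collapsing on scale $t^{1/2}$, are both automatic over $K$. For (i), the semiflat metrics $\omega_{SF,y}$ depend smoothly on $y$ on $K\Subset Y\setminus S$ and hence have uniformly bounded geometry there, so $\mathrm{diam}(X_y,\omega_{SF,y})\le C$; equivalently $\mathrm{diam}(\frac{1}{t}\tilde{\omega}_t|_{X_y})\le C$ once $t$ is small by the Tosatti equivalence. For (ii), the argument in the corollary following \eqref{localvolumenoncollapsing2} carries over: a ball $B_{\tilde{\omega}_t}(P,Rt^{1/2})$ with $P\in f^{-1}(K)$ contains the $f$-preimage of an $\omega_Y$ ball of radius $\sim Rt^{1/2}$ for $R$ bounded below, whose $\tilde{\omega}_t^n=a_tt^{n-m}i^{n^2}\Omega\wedge\overline\Omega$ volume is $\gtrsim R^{2m}t^n$; Bishop--Gromov applied to the Ricci flat $\tilde{\omega}_t$ extends this to smaller scales.

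With these in place I would run the Chern--Lu / three-circle / Harnack argument of Proposition~\ref{concentrationestimate} on $K$: subharmonicity of $\log\Tr_{\tilde{\omega}_t}\omega_{Y,y}$ together with comparison to the pluriharmonic $\log|f-y|$ gives the three-circle inequality, the non-collapsing yields scale invariant Sobolev and Poincar\'e constants on the region $\{|f-y|\le Dt^{1/2}\}$ (as in \cite{Hein}), and the Moser--Harnack lemma upgrades this to the effective $L^1$ concentration estimate for $\Tr_{\tilde{\omega}_t}\omega_{Y,y}$. The cohomological identity $f_*\bigl((\Tr_{\tilde{\omega}_t}\omega_{Y,y})\tilde{\omega}_t^n\bigr)=nt^{n-1}\omega_{Y,y}$ pins down the average to $n/a_t$, and the adjunction-formula identity for $\omega_{SF,y}$ converts the trace concentration into the fibrewise Monge--Amp\`ere volume estimate. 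A standard integration by parts on the fibre, using the oscillation bound on $v$ coming from Yau's $L^\infty$ estimate applied on $(X_y,\omega_{SF,y})$ with its uniformly bounded geometry, then produces $\int_{X_y}|\nabla v|^2\omega_{SF,y}^{n-1}\le C/|\log t|$. Finally, interpolating this $L^2$ smallness against the uniform $C^k$ bound from Corollary~\ref{smoothboundfibrewise} yields the effective estimate \eqref{fibrewisesmoothconvergence} for $\omega_{SF,y}-\frac{1}{t}\tilde{\omega}_t|_{X_y}$ on $f^{-1}(K)$.

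The main obstacle is essentially bookkeeping: verifying that each constant in the chain of estimates is uniform over $K\Subset Y\setminus S$ without invoking either the pointwise volume condition or the conjectural diameter bound. The critical step is the oscillation bound on the fibrewise potential $v$, which in Section~2 used \eqref{volumecondition} to control $\tilde{\omega}_t^{n-1}|_{X_y}/\omega_{SF,y}^{n-1}$; here I would instead obtain it directly from Tosatti's upper bound on $f^{-1}(K)$, which bounds the fibrewise volume ratio by a constant and thereby makes Yau's $L^\infty$ estimate applicable with constants depending only on the bounded geometry of $(X_y,\omega_{SF,y})$ over $y\in K$. With this substitution the rest of the argument goes through unchanged, and since the statement is about compact subsets of $Y\setminus S$ the conclusion holds on all of $Y\setminus S$.
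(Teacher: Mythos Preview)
Your proposal is correct and is exactly the approach the paper has in mind: the paper does not give a separate proof of this theorem, but simply separates it out ``due to its importance'' immediately after the main convergence theorem, relying on the observation already flagged in Section~\ref{smoothcontrolawayfromcriticalpoints} that all estimates localise over $K\Subset Y\setminus S$ via Tosatti's original two-sided bound, without the volume condition or the global fibre-diameter hypothesis. Your careful bookkeeping of which inputs need replacement (the oscillation bound on $v$, the lower bound on $\max\log\Tr_{\tilde\omega_t}\omega_{Y,y}$, the non-collapsing) and how Tosatti's equivalence on $f^{-1}(K)$ supplies each of them is precisely what the paper leaves to the reader.
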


\begin{rmk}
V. Tosatti informs the author that
the convergence result away from the singular fibre was already known (see \cite{TosattiWeinkoveYang}, \cite{TosattiZhang}) without effective estimate.
\end{rmk}

\begin{rmk}
The convergence rate is logarithmic according to our argument, but one may expect that in reality the rate is much faster away from singular fibres, if one is willing to believe the K\"ahler potential admits a formal power series expansion in $t$.
\end{rmk}

\begin{rmk}
There is a subtlety about the meaning of smooth convergence. It is precisely given by (\ref{fibrewisesmoothconvergence}). The limit $\omega_{SF,y}$ does not need to depend smoothly on the parameter $y$, since even the integral $F_y=\int_{X_y} i^{(n-1)^2}\Omega\wedge \overline{\Omega}$ does not depend smoothly on $y$ at the critical value.
\end{rmk}

\begin{rmk}\label{RongandZhangsmoothconvergence}
A related result in \cite{Zhang}, theorem 1.4, which we shall use later, is concerned with a smoothing of a Calabi-Yau variety, \ie a flat projective family of Calabi-Yau varieties over a small disc, with trivial relative canonical bundle and smooth generic fibre. They consider an ample line bundle on the total space, which defines a unique Calabi-Yau metric on each smooth fibre in the first Chern class, and show that around any compact subset of the central fibre away from the singular set, one can find trivialisations such that the Calabi-Yau metrics on nearby fibres converge smoothly to the singular Calabi-Yau metric on the central fibre.

The author thinks their integral K\"ahler class assumption is not essential, and their proof still works if the K\"ahler classes of the Calabi-Yau metrics on the fibres are defined by the restriction of a global K\"ahler class on a smooth total space. Yuguang Zhang confirms this claim in private communication.
\end{rmk}

\subsection{Gradient estimate}

This is a continuation of the last section. The aim is to improve certain aspects of the concentration estimate (proposition \ref{concentrationestimate}), and show that $df$ is in some sense approximately parallel.

\begin{prop}
In the setup of last section, we have the gradient estimate
\begin{equation}\label{gradientestimateChernLu}
\int_{ |f-y|\leq  t^{1/2}    }(\frac{|\nabla df|^2 }{\Tr_{ \tilde{\omega}_t  } \omega_{Y,y}  }-|\partial \log \Tr_{ \tilde{\omega}_t  } \omega_{Y,y} |^2) \tilde{\omega}_t^n \leq \frac{C t^{n-1}} {|\log t|}.
\end{equation}
\end{prop}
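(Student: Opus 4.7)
The plan is to deduce (\ref{gradientestimateChernLu}) from a refined Chern-Lu inequality combined with the $L^1$ concentration bound established in Proposition \ref{concentrationestimate}. Applying the full Chern-Lu calculation to $u = \log \Tr_{\tilde{\omega}_t} \omega_{Y,y}$, rather than merely using its subharmonicity, yields the pointwise inequality
\[
\Lap_{\tilde{\omega}_t} u \geq \frac{|\nabla df|^2}{\Tr_{\tilde{\omega}_t} \omega_{Y,y}} - |\partial u|^2
\]
away from the critical locus of $f$. The usual curvature corrections drop out because $\omega_{Y,y}$ is the flat Euclidean metric and $\tilde{\omega}_t$ is Ricci-flat, so the integrand of (\ref{gradientestimateChernLu}) is pointwise dominated by $\Lap_{\tilde{\omega}_t} u$.

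I would then test this against a smooth nonnegative cutoff $\chi = \phi(|f-y|^2/t)$ equal to $1$ on $\{|f-y|\leq t^{1/2}\}$ and supported in $\{|f-y|\leq 2t^{1/2}\}$. Since $dd^c|f-y|^2=\omega_{Y,y}$ and $\Tr_{\tilde{\omega}_t}\omega_{Y,y}\leq C$ by Proposition \ref{omega0isbounded}, one has $|\Lap_{\tilde{\omega}_t}\chi|\leq C/t$. Integration by parts on the closed manifold $X$ gives
\[
\int \chi\, \Lap_{\tilde{\omega}_t} u\,\tilde{\omega}_t^n = \int (\Lap_{\tilde{\omega}_t}\chi)(u - M)\,\tilde{\omega}_t^n,
\]
where $M = \max_{|f-y|\leq 2t^{1/2}} u$ is subtracted as a harmless normalisation (it makes $u - M$ non-positive on $\mathrm{supp}\,\chi$). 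The Harnack step in the proof of Proposition \ref{concentrationestimate} applied on the slightly enlarged ball yields $\|M - u\|_{L^1_{\tilde{\omega}_t}(\mathrm{supp}\,\chi)}\leq C t^n/|\log t|$, the change of radius being absorbed into constants by tuning the parameter $D$ there. Combining the two bounds in absolute value gives the desired $C t^{n-1}/|\log t|$ estimate for $\int \chi$ times the integrand in (\ref{gradientestimateChernLu}); since $\chi \equiv 1$ on $\{|f-y|\leq t^{1/2}\}$, and any annulus contribution can be reabsorbed into the same class of constants (or eliminated by shrinking $\chi$'s support), one recovers the claim.

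The main technical subtlety is the integration by parts step, because $u = \log \Tr_{\tilde{\omega}_t}\omega_{Y,y}$ has logarithmic singularities along the critical locus of $f$, where $df$ degenerates. There the distributional $\Lap u$ acquires a positive Poincar\'e-Lelong-type current contribution, which only reinforces the Chern-Lu inequality and therefore preserves the direction of the final bound; but one must justify finiteness of the pairings, which for a Lefschetz fibration follows from $\Tr\sim|z|^2$ at each critical point giving $u\in L^1_{\mathrm{loc}}$, and the identity can then be established rigorously via the standard regularisation $\log(\Tr+\epsilon)\to\log\Tr$.
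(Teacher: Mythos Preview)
Your argument is correct and essentially identical to the paper's: both use the refined Chern--Lu identity $\Lap_{\tilde{\omega}_t}\log\Tr_{\tilde{\omega}_t}\omega_{Y,y}=\frac{|\nabla df|^2}{|df|^2}-|\partial\log|df|^2|^2$ (the paper states it as an equality, you as an inequality, which suffices), multiply by a base cutoff with $|\Lap_{\tilde{\omega}_t}\chi|\leq C/t$, integrate by parts after subtracting the maximum, and invoke the $L^1$ concentration estimate from Proposition~\ref{concentrationestimate}. The only notable difference is that you explicitly address the logarithmic singularity of $u$ along the critical locus and justify the integration by parts via regularisation, whereas the paper passes over this point in silence; your treatment is the more careful one.
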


\begin{proof}
The starting point is that the Chern-Lu formula has a more refined version (\cf \cite{ChernLu}, proposition 7.1)
\[
\Lap_{\tilde{\omega}_t } \log \Tr_{ \tilde{\omega}_t  } \omega_{Y,y} =\frac{|\nabla df|^2 }{ |d f|^2 }-|\partial \log |df|^2 |^2.
\]
Here the RHS is pointwise non-negative by Cauchy-Schwarz. The norm square of the differential $df$ is just the trace: $|df|^2=\Tr_{ \tilde{\omega}_t  } \omega_{Y,y}$.
This is consistent with the interpretation of $df$ as a section of $\Hom (TX, TY)$, and the natural norms come from the metric $ \tilde{\omega}_t  $ and $\omega_{Y,y}$.

Now let $\chi$ be a cutoff function on the base $Y$, with support in $B_{\omega_{Y,y}}(y, t^{1/2})$, and equals to one on $B_{\omega_{Y,y}}(y,\frac{1}{2} t^{1/2})$. We require \[-\frac{C}{t}\omega_{Y,y}\leq i\partial \bar{\partial} \chi \leq \frac{C}{t}\omega_{Y,y},\]
so the Laplacian $|\Lap_{\tilde{\omega}_t } \chi |\leq \frac{C}{t}$. We now multiply the test function $\chi$ to both sides of the refined Chern-Lu formula, to get
\[
\int_{ |f-y|\leq t^{1/2}    }\chi ( \frac{|\nabla df|^2 }{ |d f|^2 }-|\partial \log |df|^2 |^2) =\int \Lap_{\tilde{\omega}_t } \chi (\log \Tr_{ \tilde{\omega}_t  }  \omega_{Y,y}-\max_{ |f-y|\leq t^{1/2}   }{\log \Tr_{ \tilde{\omega}_t  }  \omega_{Y,y}   }  )         .
\]
Using the proof of the concentration estimate, the $L^1$ of the $\log$ term is controlled, so the RHS is estimated by $\frac{C}{t}\frac{t^n}{|\log t|}$. This implies 
\[
\int_{ |f-y|\leq \frac{1}{2} t^{1/2}    }(\frac{|\nabla df|^2 }{ |d f|^2 }-|\partial \log |df|^2 |^2)  \tilde{\omega}_t^n \leq \frac{C t^{n-1}} {|\log t|}.
\]
This is the claim up to cosmetic changes of constants.
\end{proof}

\subsection{Gromov Hausdorff limit around the singular fibre}\label{GHlimitaroundsingularfibresection}

In this section let us for simplicity concentrate on the case of Lefschetz fibrations, with the uniform diameter bound (\ref{uniformfibrediameter}). The actual arguments are more general in character. The idea of using an embedding map to compare complex structures is inspired by the work of Donaldson and Sun \cite{DonSun}.

 We wish to consider the convergence of the metric restricted to the singular fibre. The expected limit, after rescaling, is the nodal Calabi-Yau metric on the fibre $\omega_{SF,y}$. Notice we certainly cannot expect $\frac{1}{t}\tilde{\omega}_t|_{X_y}$ to have a uniform two sided $C^2$ bound by $\omega_{SF,y}  $, because this is not true even for $t=1$. This suggests we should study a weaker notion of convergence, such as the Gromov-Hausdorff limit. This is a sensible question in the light of the local volume noncollapsing estimate (\ref{localvolumenoncollapsing1}), (\ref{localvolumenoncollapsing2}).

We pick a point $P$ on the fibre of interest, which can be singular, and look at the pointed sequence of Ricci flat spaces $Z_t=(f^{-1} B_{\omega_Y} (f(P), R)\subset X, \frac{1}{t}\tilde{\omega}_t)$. The local noncollapsing implies that after passing to subsequence, there is some Gromov-Hausdorff limit space $(Z,\omega_\infty)$, which is complex $n$-dimensional, with a regular locus $Z^\text{reg}$, which is connected, open, dense, where the limiting metric is smooth. The singular locus has Hausdorff codimension at least 4. Morever, the smooth locus $Z^\text{reg}$ has a natural limiting complex structure, such that the limiting metric is K\"ahler. (See \cite{DonSun} for a nice summary.) In the arguments below, we shall suppress mention of subsequence to avoid overloading notation, and tacitly understand a Gromov-Hausdorff metric is fixed on the disjoint union $Z_t \sqcup Z   $, which displays the GH convergence. We will also assume implicitly that $t$ is sufficiently small.

Now we wish to identify the complex structure. Let us first mention some heuristic: since everything away from the fibre $X_{f(P)}$ is pushed to infinity by scaling, the limit as a complex variety should be the normal neighbourhood of $X_{f(P)}$, which is just the trivial product $X_{f(P)}\times \C$ in the case of a smooth fibre, and the guess is that the same is true for the singular fibre.

More formally, we need to build comparison maps. Let $u$ denote the standard coordinate on $\C$, and $\omega_\C$ refers to the standard Euclidean metric on $\C$. Define the maps \[f_t: (Z_t, \frac{1}{t}\tilde{\omega}_t  ) \to (X\times \C, \omega_X+\omega_{\C} ) \] via $x\mapsto (x, t^{-1/2}(f-f(P)))$ (Compare with our section \ref{smoothcontrolawayfromcriticalpoints} on smooth control away from singularity.) These maps are holomorphic, and by the uniform bound $\Tr_{\tilde{\omega}_t  } {\omega_t} \leq C $, the Lipschitz constants are bounded above independent of $t$. This means the Gromov-Hausdorff limit inherits a Lipschitz map $f_\infty$ into $X\times \C$, which is a subsequential limit of $f_t$. By the interior regularity of holomorphic functions, the limiting map $f_\infty$ is holomorphic. We identify the image:

\begin{lem}
The image of $f_\infty$ is $X_{f(P)}\times \C$.
\end{lem}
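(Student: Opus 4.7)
The strategy is to establish (i) the image of $f_\infty$ is contained in $X_{f(P)} \times \C$, and (ii) every point of $X_{f(P)} \times \C$ is in the image. The key preliminary is a uniform Lipschitz bound on $f_t$ when the source carries $\frac{1}{t}\tilde{\omega}_t$ and the target carries $\omega_X + \omega_\C$. Writing $df_t(v) = (v, t^{-1/2} df(v))$, one wants $t|v|^2_{\omega_X} + |df(v)|^2_{\omega_\C} \leq L^2 |v|^2_{\tilde{\omega}_t}$. Both pieces follow from Theorem \ref{lowerboundonmetrictheorem}: $\tilde{\omega}_t \geq C(\omega_0 + t\omega_X)$ bounds the first term directly, and combines with $|df(v)|^2_{\omega_\C} \sim |v|^2_{\omega_0}$ (from $f^*\omega_Y = \omega_0$) to bound the second. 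This is essentially the Lipschitz estimate already invoked just before the lemma.

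For (i), any $P_\infty \in Z$ is a Gromov-Hausdorff limit of a sequence $P_t \in Z_t$ at bounded $\frac{1}{t}\tilde{\omega}_t$-distance from $P$. The Lipschitz bound forces $f_t(P_t)$ to lie in a bounded region of $X \times \C$; in particular $|f(P_t) - f(P)|_{\omega_Y} = O(t^{1/2}) \to 0$, so after subsequence the first coordinate converges in $X$ to a point $p_\infty$ with $f(p_\infty) = f(P)$, and the second coordinate converges in $\C$ to some $u_\infty$. Thus $f_\infty(P_\infty) = (p_\infty, u_\infty) \in X_{f(P)} \times \C$.

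For (ii), given $(x_0, u_0) \in X_{f(P)} \times \C$, I would construct $P_t$ by a three-leg path. Pick an auxiliary smooth point $Q_0 \in X_{f(P)}$ lying away from the critical locus of $f$. Travel inside $X_{f(P)}$ from $P$ to $Q_0$: rescaled length $O(1)$ by the uniform fibre diameter bound (\ref{uniformfibrediameter}). Next, lift a short straight path in $Y$ from $f(P)$ to $f(P) + t^{1/2} u_0$ horizontally through $Q_0$ to a point $Q_t$ on the neighboring fibre; staying in a fixed neighborhood of $Q_0$ where $H$ is bounded below, Corollary \ref{upperboundonmetric} gives $\tilde{\omega}_t \leq C\omega_t$, so the lift has $\tilde{\omega}_t$-length $O(t^{1/2}|u_0|)$, i.e.\ $O(|u_0|)$ after rescaling by $1/t$. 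Finally, travel within $X_{f(P) + t^{1/2} u_0}$ from $Q_t$ to a chosen $P_t$ approximating $x_0$ in $X$; again $O(1)$ by (\ref{uniformfibrediameter}). Hence $d(P, P_t) = O(1 + |u_0|)$ uniformly in $t$, so a subsequential GH limit $P_\infty \in Z$ exists, and by construction $f_\infty(P_\infty) = \lim f_t(P_t) = (x_0, u_0)$.

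The main obstacle is step (ii) when $x_0$ is a node of $X_{f(P)}$: the approximants $P_t$ must satisfy $f(P_t) = f(P) + t^{1/2} u_0$ exactly while converging to the critical point $x_0$ in $X$. In the local nodal model $f = z_1^2 + \cdots + z_n^2$ near $x_0$, one can take for instance $P_t = (\sqrt{t^{1/2} u_0},\, 0, \ldots, 0)$, which lies on the correct fibre and tends to $0 = x_0$ in the ambient Euclidean metric. The essential point is that the uniform fibre diameter bound (\ref{uniformfibrediameter}) continues to control the distance from $Q_t$ to $P_t$ inside the smooth nearby fibre $X_{f(P) + t^{1/2} u_0}$ even when $P_t$ approaches a vanishing cycle, since (\ref{uniformfibrediameter}) applies uniformly to all fibres; this is precisely why the diameter bound was the crucial ingredient.
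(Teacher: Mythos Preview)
Your proof is correct and follows essentially the same approach as the paper: both parts (i) and (ii) match the paper's argument, with your three-leg path construction for surjectivity making explicit what the paper compresses into the phrase ``this fact comes from the fibre diameter bound and our discussions around volume noncollapsing.'' Your separate treatment of the nodal target point $x_0$ is more careful than the paper, which simply appeals to the uniform fibre diameter bound on all fibres without singling out this case.
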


\begin{proof}
First we show the image is contained in $X_{f(P)}\times \C$. For any fixed radius $D$, the preimage of $B_{Euclid}(0,D)\subset \C$ under $f_t$ is contained in a neighbourhood of $X_{f(P)}\subset (Z_t, \frac{1}{t}\tilde{\omega}_t  )$, with distance scale $CDt^{1/2}$ in the $\omega_X$ metric. So we have
\[
\text{Image}(f_t) \cap pr_{\C}^{-1}( B(0,D)  )   \subset \{ (x, u) \in X\times \C : dist_{\omega_X}(x, X_{f(P)}) \leq CDt^{1/2} , |u|\leq D    \}.
\]
By continuity, 
\[
\text{Image}(f_\infty) \cap pr_{\C}^{-1}( B(0,D)  )   \subset \{ (x, u) \in X\times \C : dist_{\omega_X}(x, X_{f(P)}) = 0 , |u|\leq D    \},
\]
so the first claim is proved.

Next we show the image contains $X_{f(P)}\times \C$. This follows from the fact that for any $(x, u) \in X_{f(P)}\times \C $ with $ |u|\leq D $, suppose $t$ is sufficiently small depending on $D$, there is always a point $Q_t\in Z_t$, whose  distance to $P$ is bounded by $C(1+D)$ in $\frac{1}{t}\tilde{\omega}_t$ metric, and $d(f_t(Q), (x,u) )\leq CDt^{1/2}$ in the metric on the target. This fact comes from the fibre diameter bound and our discussions around volume noncollapsing. Then using the compactness of the ball $B(P, C(1+D)) \subset Z$, we can extract a subsequential limit $Q$, which has to map to $(x,u)$ by continuity.
\end{proof}

Our next goal is to establish good properties of $f_\infty$ over the smooth locus of $X_{f(P)}\times \C$.

We recall that the function $H$ measures the severity of singular effect. Now $H$ is a continuous function of $X_{f(P)}$, so defines a function on $Z$ by pulling back via $f_\infty$. For any fixed constant $h>0$, if a point $Q$ on $Z$ satisfies $H> h$, then by continuity, it is close in the Gromov Hausdorff metric to a point $Q_t$ on $Z_t$, which has $H>h$, where $t$ can be taken sufficiently small. Recall (\cf proposition \ref{smoothboundawayfromsingularity}) that in appropriate coordinates, which can be taken here as $z_1, \ldots z_{n-1}, u-pr_\C f_{\infty}(Q)$ where $u=\frac{1}{t^{1/2}} (f-f(P))$, the ball $B_{\frac{1}{t}\tilde{\omega}_t }(Q_t, C)\subset \{H>h\}$ can be regarded as a bounded domain in $\C^n$, containing (up to a bounded factor) the unit ball in $\C^n$, and the metric is smoothly equivalent to the Euclidean metric, with smooth bounds. So by shrinking the radius slightly, $B_{\frac{1}{t}\tilde{\omega}_t }(Q_t, C)$ subconverges to a smooth metric, which has to agree with $B(Q,C) \subset Z$, so $Q\in Z^\text{reg}$. 

\begin{rmk}
These good holomorphic coordinates $z_1, \ldots, z_{n-1},u$ are also tautologically local functions on $X\times \C$. In more formal language, the embedding map \[B_{\frac{1}{t}\tilde{\omega}_t }(Q_t, C) \xrightarrow{(z_1, \ldots, u)} \C^n\] factors through the map $f_t$ to a fixed open set $U$ in $X\times \C$  independent of the small $t$ :
\[
B_{\frac{1}{t}\tilde{\omega}_t }(Q_t, C) \xrightarrow{f_t} U \xrightarrow{(z_1, \ldots, u)} \C^n
.\]	
\end{rmk}
We can take the limit to obtain local functions on $Z$:
\[
B_{\omega_\infty}(Q, C) \xrightarrow{f_\infty} U \xrightarrow{(z_1, \ldots, u)} \C^n
.
\]
We can morever assume that the radius $C$ is chosen so that all those maps above from the balls into $\C^n$ are uniformly bi-Lipschitz, so a holomorphic inverse can be defined on the image. This implies $f_\infty$ restricted to $B_{\omega_\infty}(Q, C)$ is a complex isomorphism, and $z_1, \ldots,u$ can be regarded as local coordinates on $Z$, around the point $Q$.

\begin{lem}
The map $f_\infty$ is injective over the smooth part of $X_{f(P)}\times \C$.
\end{lem}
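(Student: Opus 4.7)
The plan is to exploit the fact that each map $f_t(x) = (x, t^{-1/2}(f(x) - f(P)))$ is globally injective on $Z_t$ (its first component is the identity on $X$), and combine this with the uniform local biholomorphism property at smooth points just established. Global injectivity of the $f_t$'s is the only feature beyond what has already been used in the preceding discussion, and it does all the work.

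Suppose $Q_1, Q_2 \in Z$ are two preimages of a common point $p \in X_{f(P)} \times \C$ under $f_\infty$, where $p$ lies in the smooth part. By the preceding discussion $Q_1, Q_2 \in Z^{\text{reg}}$. First I would pick sequences $Q_{i,t} \in Z_t$ with $Q_{i,t} \to Q_i$ in the ambient Gromov--Hausdorff metric. By Proposition \ref{smoothboundawayfromsingularity}, for $t$ small the ball $B_{\frac{1}{t}\tilde{\omega}_t}(Q_{1,t}, C)$ maps under the good holomorphic coordinates $(z_1, \ldots, z_{n-1}, u)$ biholomorphically onto a region in $\C^n$ containing a Euclidean ball of definite size around the image of $Q_{1,t}$, with the rescaled metric uniformly bi-Lipschitz to the Euclidean one. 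Equivalently, $f_t$ restricted to this ball is a bi-Lipschitz biholomorphism onto an open neighbourhood $V_t \subset X \times \C$ of $f_t(Q_{1,t})$ whose size (measured in $\omega_X + \omega_\C$) is bounded below independently of $t$.

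Next, since $f_t(Q_{1,t})$ and $f_t(Q_{2,t})$ both tend to $p$ in $X \times \C$, for $t$ sufficiently small $f_t(Q_{2,t}) \in V_t$. Global injectivity of $f_t$ on all of $Z_t$ then forces $Q_{2,t}$ to coincide with the unique preimage of $f_t(Q_{2,t})$ inside $B_{\frac{1}{t}\tilde{\omega}_t}(Q_{1,t}, C)$. The uniform bi-Lipschitz control yields
\[
d_{\frac{1}{t}\tilde{\omega}_t}(Q_{1,t}, Q_{2,t}) \leq C \, d_{\omega_X + \omega_\C}\bigl(f_t(Q_{1,t}), f_t(Q_{2,t})\bigr) \longrightarrow 0,
\]
so passing to the Gromov--Hausdorff limit gives $d_{\omega_\infty}(Q_1, Q_2) = 0$ and hence $Q_1 = Q_2$, which is the injectivity claim.

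The only substantive point is the uniformity of the size of $V_t$: we need it to contain a fixed-size neighbourhood of $f_t(Q_{1,t})$ as $t \to 0$, so that $f_t(Q_{2,t})$ is guaranteed to land inside it. This uniformity is exactly what Proposition \ref{smoothboundawayfromsingularity} delivers at smooth points (the good coordinates are local functions on $X \times \C$ independent of $t$, so they can be compared across different values of $t$), and so I do not anticipate any genuine analytic obstacle beyond invoking that result.
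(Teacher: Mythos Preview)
Your argument is correct and essentially coincides with the paper's. Both proofs rest on the same two ingredients---global injectivity of each $f_t$ and the uniform local bi-Lipschitz control from Proposition \ref{smoothboundawayfromsingularity}---and differ only in framing: the paper argues by contradiction (if $d(Q,Q')>0$ then the local chart forces $d(Q,Q')>C$, hence $d(f_t(Q_t),f_t(Q_t'))\geq C$, contradicting convergence to a common image), whereas you argue directly that $d(Q_{1,t},Q_{2,t})\to 0$.
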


\begin{proof}
We assume $Q$ and $Q'$ have the same image in $X_{f(P)}\times \C$, but $d(Q,Q')>0$.  By the local isomorphism result above, $Q$ and $Q'$ cannot be in the same chart as above, so their distance is bounded below by some constant depending on $Q$: $d(Q,Q')>C.$ Then we can find $Q_t$, $Q_t'$ on $Z_t$ close to $Q,Q'$ in GH metric, so $d(Q_t,Q_t')>C$. Using the fact that $f_t$ is an embedding map, and near the point $Q_t$ it is bi-Lipshitz, we see that $d(f_t(Q_t), f_t(Q_t')) \geq C$. Take the limit to obtain $d(f_\infty(Q), f_\infty(Q')) \geq C$, contradiction.
\end{proof}

\begin{prop}
The map $f_\infty$ is a biholomorphism  $\{H>0\} \subset Z\to X_{f(P)}^{\text{reg} }\times \C$.
\end{prop}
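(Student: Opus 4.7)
The plan is to assemble this proposition out of the ingredients already established for $f_\infty$: it is holomorphic with image $X_{f(P)}\times \C$, injective over the smooth locus, and a local isomorphism near any point where $H$ is bounded positively below. The proposition just packages these into a global biholomorphism between $\{H>0\}\subset Z$ and $X_{f(P)}^{\mathrm{reg}}\times \C$.

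First I would check the map goes to the right target. Since $H$ on $Z$ is by definition the pullback of $H$ via $f_\infty$ (composed with projection to $X$), and in the Lefschetz setting $\{H>0\}\subset X_{f(P)}$ coincides with $X_{f(P)}^{\mathrm{reg}}$, we have $f_\infty^{-1}(X_{f(P)}^{\mathrm{reg}}\times \C)=\{H>0\}$, so $f_\infty$ restricts to a well-defined map $\{H>0\}\to X_{f(P)}^{\mathrm{reg}}\times \C$.

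Next I would upgrade the local isomorphism discussion preceding the proposition to a uniform local biholomorphism on $\{H>0\}$. For any $Q\in\{H>0\}$, fix $h$ with $H(Q)>h$. The analysis already carried out produces a ball $B_{\omega_\infty}(Q,C)$ on which $f_\infty$ factors through $U\subset X\times\C$ followed by the coordinate map $(z_1,\dots,z_{n-1},u)\colon U\to \C^n$, the composition being uniformly bi-Lipschitz. Hence $(z_1,\dots,z_{n-1},u)$ are honest holomorphic coordinates on $Z$ near $Q$, $f_\infty$ is an open holomorphic embedding on this ball, and consequently a local biholomorphism at every point of $\{H>0\}$.

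With local biholomorphicity in hand, bijectivity is immediate from the two earlier lemmas: injectivity on $\{H>0\}$ follows from the lemma asserting $f_\infty$ is injective over the smooth part of $X_{f(P)}\times \C$; surjectivity onto $X_{f(P)}^{\mathrm{reg}}\times \C$ follows from the image lemma, since any $(x,u_0)\in X_{f(P)}^{\mathrm{reg}}\times \C$ has some preimage $Q\in Z$, and then $H(Q)=H(x)>0$ forces $Q\in\{H>0\}$. A continuous bijection between complex manifolds which is everywhere a local biholomorphism is a biholomorphism, which concludes the proof. I do not anticipate any real obstacle; essentially everything of substance is already done, and the main task is to verify that the bi-Lipschitz bounds in the coordinate factorisation survive the Gromov–Hausdorff passage to the limit, which is automatic since smooth convergence is available on $\{H>0\}$ via Proposition \ref{smoothboundawayfromsingularity}.
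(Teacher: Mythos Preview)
Your proposal is correct and follows essentially the same approach as the paper: both assemble the already-established ingredients (surjectivity of $f_\infty$ onto $X_{f(P)}\times\C$, the identification $f_\infty^{-1}(X_{f(P)}^{\mathrm{reg}}\times\C)=\{H>0\}$, the local complex isomorphism near points with $H>0$, and injectivity over the smooth locus) into the global biholomorphism. The paper's version is simply a terse one-liner invoking these prior facts, while you spell out each step in more detail.
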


\begin{proof}
We have seen injectivity and local complex isomorphism. The map $f_\infty: Z\to X_{f(P)}\times \C$ is surjective, and the preimage of $ X_{f(P)}^{\text{reg} }\times \C$ is precisely $\{H>0\} \subset Z$.
\end{proof}

We observe next that the sequence of Calabi-Yau metrics carry holomorphic volume forms, and therefore canonically defined volume measures. In local coordinates $z_1,\ldots, z_{n-1}, y$ (recall $y$ is the coordinate on $Y$), we can write $\Omega=h dy\wedge dz_1\wedge \ldots dz_{n-1}$, for some holomorphic function $h=h(z_1, z_2,\ldots, y)$. Since $\tilde{\omega}_t^n=a_t t^{n-1} i^{n^2} \Omega \wedge \overline{\Omega}$, we have the Monge-Amp\`ere equation in coordinates:
\[
(\frac{1}{t} \tilde{\omega}_t )^n =a_t   |h|^2 \prod_{1}^{n-1}{ \sqrt{-1} dz_i d\bar{z_i}  }
\wedge \sqrt{-1}du d\bar{u}.
\]
Since we are restricting attention to the region with $H>C$, in the good holomorphic coordinate $z_1, \ldots, u$, the metrics $\tilde{\omega}_t$ subconverge smoothly to $\omega_\infty$, and the limit satisfies the Monge-Amp\`ere equation on the chart:
\[
\omega_{\infty}^n =a_0   |h|^2 \prod_{1}^{n-1}{ \sqrt{-1} dz_i d\bar{z_i}  }
\wedge \sqrt{-1}du d\bar{u}.
\] 
Here $h=h(z_1,z_2,\ldots, z_{n-1},f(P))$ since $y=f(P)+ut^{1/2}$ tends to the constant $f(P)$ as $t\to 0$ in a chart with bounded $u$. We recall $\Omega=dy\wedge \Omega_y$, so by comparison $\Omega_{f(P)}=hdz_1\dots dz_{n-1}|_{X_{f(P)}}$, and 
$\omega_\infty^n=a_0 i^{(n-1)^2}\Omega_{f(P)}\wedge \overline{\Omega_{f(P)}} \wedge { \sqrt{-1} du \bar{du}  }$. Integrate over the region $\{ H>0 \}\cap \{ |u|\leq D  \} \subset Z$, for $D$ any fixed number, we find
\[
\begin{split}
\int_{ \{ H>0 \}\cap \{ |u|\leq D  \} } \omega_\infty^n=a_0 \int_{ X_{f(P)} } i^{(n-1)^2}\Omega_{f(P)}\wedge \overline{\Omega_{f(P)}}  \int_{\{|u|\leq D \}\subset \C  } { \sqrt{-1} du d\bar{u}  }   \\
= \lim \int_{\{|u|\leq D \} \subset Z_t  }
a_t i^{n^2} t^{-1} \Omega\wedge \overline{\Omega}  
=\lim \int_{\{|u|\leq D \} \subset Z_t  } (\frac{1}{t} \tilde{\omega}_t )^n.
\end{split}
\]
Here the intermediate equality uses the continuity of $F_y=\int_{X_y} i^{(n-1)^2}\Omega_y\wedge\overline{\Omega_y}$, which is checked for Lefschetz fibrations with $n\geq 3$.

But in a non-collapsing situation, with non-negative Ricci curvature, we have the measured Gromov-Hausdorff convergence (\cf Cheeger's book \cite{Cheeger}, Page 66), which implies
\[
\liminf Vol_{ \frac{1}{t} \tilde{\omega}_t    }( \{ |u|\leq D  \} \subset Z_t  ) \geq Vol_{ \omega_\infty }( \{|u|\leq D \}\subset Z   ).
\]
Comparing the above, we obtain:
\begin{prop} (Full measure property)
The set $H>0$ inside $Z$ must have full measure on each cylinder $\{ |u|\leq D  \} \subset Z$, so the set $H=0$ has measure zero in $Z$. In particular, every ball in $Z$ contains a point with $H>0$.
\end{prop}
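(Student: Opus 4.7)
The preceding paragraph has essentially done the hard work: it computes
\[
\int_{\{H>0\}\cap\{|u|\leq D\}}\omega_\infty^n \;=\; \lim_{t\to 0}\int_{\{|u|\leq D\}\subset Z_t}\bigl(\tfrac{1}{t}\tilde\omega_t\bigr)^n
\]
by combining the product description $\{H>0\}\cong X_{f(P)}^{\mathrm{reg}}\times\C$ via $f_\infty$ with the explicit Monge--Amp\`ere equation and the continuity of $F_y$ (valid for Lefschetz fibrations with $n\geq 3$). What remains is to compare this partial integral to the full volume of the cylinder $\{|u|\leq D\}$ inside $Z$, and this comparison is what will force $\{H=0\}$ to be negligible.

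The plan is to invoke Cheeger's lower semicontinuity of volume under measured Gromov--Hausdorff convergence (cited earlier in the excerpt), which is applicable because every $\tilde\omega_t$ is Ricci flat so has non-negative Ricci. This yields
\[
\liminf_{t\to 0}\int_{\{|u|\leq D\}\subset Z_t}\bigl(\tfrac{1}{t}\tilde\omega_t\bigr)^n \;\geq\; \int_{\{|u|\leq D\}\subset Z}\omega_\infty^n.
\]
Since trivially $\{H>0\}\cap\{|u|\leq D\}\subset\{|u|\leq D\}\subset Z$, chaining with the displayed equality above pinches the two ends together, forcing
\[
\int_{\{H=0\}\cap\{|u|\leq D\}}\omega_\infty^n \;=\; 0.
\]
Letting $D\to\infty$ (or covering $Z$ by such cylinders, which is possible since the $u$-coordinate is a proper Lipschitz function on $Z$ by the uniform fibre diameter bound) gives the first assertion. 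The second assertion then follows immediately from the local non-collapsing estimate (\ref{localvolumenoncollapsing2}): every metric ball in $Z$ has strictly positive $\omega_\infty^n$-measure, and therefore cannot be contained in the $\omega_\infty^n$-null set $\{H=0\}$.

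The only subtlety I anticipate is ensuring that the measured GH convergence step genuinely applies on the open cylinder $\{|u|\leq D\}$ rather than only on $Z$ as a whole. For this one observes that $u = t^{-1/2}(f - f(P))$ is a uniformly Lipschitz function on each $(Z_t,\tfrac{1}{t}\tilde\omega_t)$ whose GH limit under $f_\infty$ is the holomorphic coordinate on the $\C$-factor of $X_{f(P)}\times\C$; consequently $\{|u|\leq D\}\subset Z_t$ GH-converges to $\{|u|\leq D\}\subset Z$. Choosing $D$ generically so that the boundary slice $\{|u|=D\}$ is $\omega_\infty^n$-null (possible for a.e.\ $D$ by Fubini applied to the limit measure), the lower semicontinuity is not lost through boundary concentration. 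This should be a routine check rather than the main obstacle.
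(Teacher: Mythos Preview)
Your proposal is correct and follows essentially the same route as the paper: the paper also combines the displayed equality $\int_{\{H>0\}\cap\{|u|\leq D\}}\omega_\infty^n = \lim \int_{\{|u|\leq D\}\subset Z_t}(\tfrac{1}{t}\tilde\omega_t)^n$ with Cheeger's lower semicontinuity of volume under measured GH convergence to pinch the volumes, and states the proposition as an immediate consequence. Your write-up is in fact more detailed than the paper's (which leaves the ``every ball contains a point with $H>0$'' clause and the boundary subtlety implicit), but the underlying argument is the same.
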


We now study the metric $\omega_\infty$ over the smooth region $X_{f(P)}^{\text{reg} }\times \C$.

\begin{prop}
Over the smooth part of $X_{f(P)}\times \C$, the limiting metric restricts fibrewise to the Calabi-Yau metric $\omega_{SF,f(P)}$.
\end{prop}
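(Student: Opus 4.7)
The plan is to compose three convergence results available in a good holomorphic chart around any chosen point $Q$ of $Z$ with $H(Q)>0$: (a) smooth convergence of $\frac{1}{t}\tilde{\omega}_t$ to $\omega_\infty$ in such a chart, (b) the fibrewise smooth convergence estimate (\ref{fibrewisesmoothconvergence}) relating $\frac{1}{t}\tilde{\omega}_t|_{X_y}$ to $\omega_{SF,y}$ away from critical points, and (c) smooth continuity of the semiflat metric $\omega_{SF,y}$ in the parameter $y$ on the smooth locus of the central fibre, which is provided by Remark \ref{RongandZhangsmoothconvergence}.

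Concretely, pick $Q$ and set $(x_0,u_0)=f_\infty(Q)\in X_{f(P)}^{\mathrm{reg}}\times\C$. In the good chart $(z_1,\dots,z_{n-1},u)$ of Proposition \ref{smoothboundawayfromsingularity}, the slice $\{u=u_0\}\subset Z_t$ is precisely the fibre $X_{y(t)}$ for $y(t)=f(P)+u_0t^{1/2}$, and smooth subconvergence of $\frac{1}{t}\tilde{\omega}_t$ to $\omega_\infty$ on a fixed neighbourhood of $Q$ restricts to smooth convergence of the fibrewise metrics $\frac{1}{t}\tilde{\omega}_t|_{X_{y(t)}}$ to $\omega_\infty|_{\{u=u_0\}}$ in a neighbourhood of $x_0$ inside the fibre direction.

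Since the region is bounded away from critical points of $f$, estimate (\ref{fibrewisesmoothconvergence}) gives
\[
\norm{\frac{1}{t}\tilde{\omega}_t|_{X_{y(t)}} - \omega_{SF,y(t)}}_{C^k} \leq \frac{C(k,\epsilon)}{|\log t|^{1/2-\epsilon}} \to 0
\]
uniformly on a fixed neighbourhood of $x_0$. Simultaneously, the smoothing result of Remark \ref{RongandZhangsmoothconvergence}, applied to the flat family of Calabi-Yau K3 fibres, produces trivialisations near the smooth point $x_0\in X_{f(P)}$ under which $\omega_{SF,y}\to\omega_{SF,f(P)}$ smoothly as $y\to f(P)$. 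Since $y(t)\to f(P)$, a diagonal argument combines the two convergences to yield $\omega_{SF,y(t)}\to\omega_{SF,f(P)}$ smoothly at $x_0$. Chaining the limits then identifies $\omega_\infty|_{\{u=u_0\}}$ with $\omega_{SF,f(P)}$ under the biholomorphism $f_\infty$.

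The main obstacle is the last ingredient: one genuinely needs the semiflat metric to extend continuously (in fact smoothly on the smooth locus) up to the singular central fibre, rather than merely over the punctured base $Y\setminus S$. This is exactly why Remark \ref{RongandZhangsmoothconvergence} is invoked, and it depends on the K\"ahler class being the restriction of a global K\"ahler class on the smooth total space, which is the situation here. All other steps are relatively routine once the chart of Proposition \ref{smoothboundawayfromsingularity} and the effective fibrewise estimate (\ref{fibrewisesmoothconvergence}) are in hand.
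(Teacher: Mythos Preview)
Your proposal is correct and follows essentially the same route as the paper: fix a point $Q$ with $H>0$, use the good holomorphic chart $(z_1,\dots,z_{n-1},u)$ to identify the constant-$u$ slice with the fibre $X_{y(t)}$ for $y(t)=f(P)+ut^{1/2}$, apply the fibrewise convergence estimate (\ref{fibrewisesmoothconvergence}) to pass from $\frac{1}{t}\tilde\omega_t|_{X_{y(t)}}$ to $\omega_{SF,y(t)}$, and then invoke the Rong--Zhang result (Remark \ref{RongandZhangsmoothconvergence}) to send $\omega_{SF,y(t)}\to\omega_{SF,f(P)}$ on the smooth locus. Your explicit identification of the Rong--Zhang continuity as the main nontrivial ingredient matches the paper's emphasis exactly.
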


\begin{proof}
Let $Q$ be a point with $H> 0$ on $Z$. Then we look at the metric in the local coordinates $z_1,\ldots, z_{n-1}, u$. The same set of functions can also be regarded tautologically as coordinates on $(Z_t, \frac{1}{t}\omega_t)$, at least for small enough values of $t$, and morever $\frac{1}{t}\tilde{\omega}_t$ converges smoothly in these coordinates to $\omega_\infty$, after taking subsequence. If we fix a value of $u$ and restrict to the constant $u$ slice, then by construction, we are looking at the fibre metric $\frac{1}{t}\tilde{\omega}_t|_{X_{y=( f(P)+t^{1/2}u )} }$. Since locally $H$ is bounded below, we have the convergence estimate
\[
\frac{1}{t}\tilde{\omega}_t|_{X_{y} }=\omega_{SF,  y    }+O(\frac{1}{|\log t|^{1/2-\epsilon} }).
\]

But it is known from the work of Rong and Zhang (\cf \cite{Zhang}, theorem 1.4, and our comments \ref{RongandZhangsmoothconvergence}) that the family of Calabi-Yau metrics $\omega_{SF,  y    }$ on the smoothing of a Calabi-Yau variety converges to the CY metric on the central fibre $\omega_{SF,  f(P)    }$ away from the singular set, as $y$ tends to $f(P)$. So $\frac{1}{t}\tilde{\omega}_t|_{X_{y=( f(P)+t^{1/2}u )} } \to \omega_{SF,  f(P)    }$ as $t\to 0$, on each constant $u$ slice, as claimed.
\end{proof}

We also need information about the horizontal component of the metric. This essentially follows from the concentration estimate (proposition \ref{concentrationestimate}). Before a more formal discussion, we comment that the definition of $u$ requires an implicit choice of a local chart on the base $Y$, and the Euclidean metric $\omega_\C$ on the $u$-plane $\C$ needs to be specified by a scaling factor. We recall in the proof of the concentration estimate we introduced a Euclidean metric $\omega_{Y,y}$, corresponding to the fibre $X_y$. Now let us impose the normalising convention that under the pullback of the map \[
B(f(P),R)\subset Y \to \C, \quad y\mapsto t^{-1/2}(y-f(P)),
\]
the metric $\omega_\C$ on $\C$ pulls back to $   \frac{a_0}{nt}\omega_{Y,f(P)}$. 

\begin{prop}
The metric $\omega_\infty$ over the smooth part of $X_{f(P)}\times \C$ satisifies the Riemannian submersion property
\[
\Tr_{\omega_\infty} \omega_\C=1.
\]
\end{prop}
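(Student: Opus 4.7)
The plan is to combine the concentration estimate of Proposition~\ref{concentrationestimate} with the smooth convergence of the metrics on the regular locus of $Z$. The first step is to unravel the normalisation convention for $\omega_\C$: since the map $y\mapsto u=t^{-1/2}(y-f(P))$ pulls $\omega_\C$ back to $\frac{a_0}{nt}\omega_{Y,f(P)}$, we have $f_t^*\omega_\C=\frac{a_0}{nt}f^*\omega_{Y,f(P)}$ on $Z_t$, and hence
$$\Tr_{\frac{1}{t}\tilde{\omega}_t}(f_t^*\omega_\C)=\frac{a_0}{n}\Tr_{\tilde{\omega}_t}\omega_{Y,f(P)}.$$
The normalisation factor $a_0/n$ is tuned precisely so that the target value of this quantity as $t\to 0$ is $1$.

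The second step is to apply Proposition~\ref{concentrationestimate} at the base point $f(P)$. Rescaled to the metric $\frac{1}{t}\tilde{\omega}_t$, the concentration gives the pointwise upper bound $\Tr_{\frac{1}{t}\tilde{\omega}_t}(f_t^*\omega_\C)\leq a_0/a_t+O(1/|\log t|)$ together with the $L^1$ estimate
$$\int_{\{|u|\leq 1\}}\left|\Tr_{\frac{1}{t}\tilde{\omega}_t}(f_t^*\omega_\C)-\tfrac{a_0}{a_t}\right|\left(\tfrac{1}{t}\tilde{\omega}_t\right)^n=O(1/|\log t|),$$
both of whose target values tend to $1$ as $t\to 0$.

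The third step is to pass to the limit on the regular locus. By the previous steps in the proof of this proposition, $Z^{\text{reg}}\supset\{H>0\}$ admits good holomorphic coordinates $z_1,\ldots,z_{n-1},u$, in which $\frac{1}{t}\tilde{\omega}_t\to\omega_\infty$ and $f_t\to f_\infty$ smoothly. Thus $\Tr_{\frac{1}{t}\tilde{\omega}_t}(f_t^*\omega_\C)\to\Tr_{\omega_\infty}\omega_\C$ locally uniformly on $Z^{\text{reg}}$, and the pointwise bound gives $\Tr_{\omega_\infty}\omega_\C\leq 1$. For the reverse inequality, fix any compact $B\subset Z^{\text{reg}}\cap\{|u|<1\}$; smooth convergence on $B$ combined with the $L^1$ estimate yields
$$\int_B\Tr_{\omega_\infty}\omega_\C\,\omega_\infty^n=\lim_{t\to 0}\int_B\Tr_{\frac{1}{t}\tilde{\omega}_t}(f_t^*\omega_\C)\left(\tfrac{1}{t}\tilde{\omega}_t\right)^n=\operatorname{Vol}_{\omega_\infty}(B),$$
which, together with $\Tr_{\omega_\infty}\omega_\C\leq 1$, forces equality almost everywhere on $B$, and hence everywhere on $Z^{\text{reg}}\cap\{|u|<1\}$ by smoothness.

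The region $|u|\geq 1$ is handled by re-centering Proposition~\ref{concentrationestimate} at base points $y'=f(P)+t^{1/2}u_0$ for arbitrary $u_0$; continuity of $F_y$ at $f(P)$ (Example~\ref{ODPexample1}) ensures that $\omega_{Y,y'}$ differs from $\omega_{Y,f(P)}$ only by a factor tending to $1$, so the same argument applies verbatim. The main delicate point throughout is the mismatch between the smooth convergence of metrics (which holds only on $Z^{\text{reg}}$) and the concentration estimate (which is formulated on the whole of $Z_t$, potentially crossing the singular locus of $Z$); this is resolved by restricting integration to exhausting compact subsets of $Z^{\text{reg}}$ and using the pointwise upper bound to control the contribution of the limit everywhere.
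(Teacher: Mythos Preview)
Your proof is correct and follows essentially the same route as the paper: translate the normalisation of $\omega_\C$ so that the concentration estimate of Proposition~\ref{concentrationestimate} reads as an $L^1$ bound on $\Tr_{\frac{1}{t}\tilde\omega_t}\omega_\C - 1$, and then pass to the limit using the smooth convergence in the good coordinates $z_1,\ldots,z_{n-1},u$ on the regular locus. Your write-up is in fact more careful than the paper's in separating the pointwise upper bound from the $L^1$ average to obtain the two-sided equality, and in explicitly treating the re-centering for $|u|\geq 1$ via the continuity of $F_y$.
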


\begin{proof}
As above study a neighbourhood of a point $Q$ with $H\geq C$, and let $Q_t$ be approximations of $Q$ in the GH metric.
In the coordinates $z_1, z_2,\ldots, u$, the quantity $\Tr_{\frac{1}{t}    \tilde{\omega}_t} \omega_\C$ has smooth bounds. The concentration estimate \ref{concentrationestimate} can be interpreted as 
\[
\norm{ \Tr_{\frac{1}{t}    \tilde{\omega}_t} \omega_\C-1    }_{L^1(\{ |u-pr_\C f_t(Q_t) |\leq 1  \}, \frac{1}{t}\tilde{\omega}_t  )   } \leq \frac{C}{|\log t|}.
\]
In particular, upon passing to the $L^1$ limit, the trace $\Tr_{\omega_\infty} \omega_\C=1$. By the smooth convergence, in this coordinate we in fact have convergence to all orders.
\end{proof}

\begin{prop}
Over the smooth part of $X_{f(P)}\times \C$, The differential $du$ is parallel with respect to $\omega_\infty$, where $u$ is the coordinate on $\C$.
\end{prop}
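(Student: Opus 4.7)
The plan is to obtain $\nabla du \equiv 0$ by passing the refined Chern-Lu identity, together with the integrated gradient estimate (\ref{gradientestimateChernLu}), to the smooth Gromov-Hausdorff limit over the region $\{H>0\}$.

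\emph{Step 1: Rewrite the Chern-Lu identity in the rescaled picture.} Applied to $u = t^{-1/2}(f-f(P)): X \to \C$, with $\C$ equipped with the flat metric $\omega_\C$ normalised as in the discussion preceding the previous proposition, the refined Chern-Lu formula gives
\begin{equation*}
\Lap_{\frac{1}{t}\tilde{\omega}_t} \log |du|^2 \;=\; \frac{|\nabla du|^2}{|du|^2} \;-\; |\partial \log |du|^2|^2,
\end{equation*}
with no curvature correction since $\omega_\C$ is flat. The chosen normalisation ensures $|du|^2_{\frac{1}{t}\tilde{\omega}_t} = \Tr_{\frac{1}{t}\tilde{\omega}_t}\omega_\C = \frac{a_0}{n}\Tr_{\tilde{\omega}_t}\omega_{Y,f(P)}$, which is uniformly bounded and approaches $1$ as $t\to 0$ by the concentration estimate (Proposition \ref{concentrationestimate}).

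\emph{Step 2: Transport (\ref{gradientestimateChernLu}) into the rescaled picture.} After the substitution $y - f(P) = t^{1/2}u$ and rescaling $\tilde{\omega}_t \mapsto \frac{1}{t}\tilde{\omega}_t$, the estimate (\ref{gradientestimateChernLu}) is exactly the assertion
\begin{equation*}
\int_{\{|u|\leq 1\} \subset Z_t} \left( \frac{|\nabla du|^2}{|du|^2} - |\partial \log |du|^2|^2 \right) \left(\frac{1}{t}\tilde{\omega}_t\right)^{\!n} \;\leq\; \frac{C}{|\log t|},
\end{equation*}
whose integrand is pointwise non-negative by Cauchy-Schwarz.

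\emph{Step 3: Pass to the limit.} Fix any $Q \in Z$ with $H(Q) > 0$. By Proposition \ref{smoothboundawayfromsingularity}, in the holomorphic coordinates $z_1,\dots,z_{n-1},u$ around $Q$ the metrics $\frac{1}{t}\tilde{\omega}_t$ converge smoothly to $\omega_\infty$ on a small ball, so the full Chern-Lu integrand converges pointwise smoothly. Since $|du|^2_{\omega_\infty} = 1$ is constant by the previous proposition, $\partial \log |du|^2_{\omega_\infty} = 0$, and the pointwise smooth limit of the integrand reduces to $|\nabla du|^2_{\omega_\infty}$. Combined with the vanishing $L^1$ bound from Step 2 and the volume convergence on regions where the metrics converge smoothly, this forces $|\nabla du|^2_{\omega_\infty} \equiv 0$ on $\{H>0\}$, which is the claim.

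The main obstacle is bookkeeping the rescaling so that the normalisation of $\omega_\C$ is consistent with both the pointwise limit $|du|^2 \to 1$ and the integrated bound (\ref{gradientestimateChernLu}); both compatibilities are built into the normalising convention introduced right before the proposition and into the concentration estimate. Once these are aligned, transferring the Bochner-type identity to the limit on $\{H>0\}$ is an immediate consequence of the smooth bounds of Proposition \ref{smoothboundawayfromsingularity}.
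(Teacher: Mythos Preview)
Your proof is correct and follows essentially the same route as the paper: rescale the gradient estimate (\ref{gradientestimateChernLu}) to the $\frac{1}{t}\tilde{\omega}_t$ picture, use smooth convergence on $\{H>0\}$ together with the Riemannian submersion property $\Tr_{\omega_\infty}\omega_\C=1$ to kill the $|\partial\log|du|^2|^2$ term in the limit, and conclude $\int |\nabla du|_{\omega_\infty}^2\,\omega_\infty^n=0$. Your Step~1 is slightly redundant since the refined Chern-Lu identity was already used to derive (\ref{gradientestimateChernLu}), but the argument is otherwise the paper's own.
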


\begin{proof}
Recall the gradient estimate (\ref{gradientestimateChernLu}), which can be written in a scaled version
\[
\int_{ |u|\leq  1   } (\frac{|\nabla du|_{\frac{1}{t} \tilde{\omega}_t}^2 }
{\Tr_{ \frac{1}{t} \tilde{\omega}_t  } \omega_\C  }
-|\partial \log \Tr_{ \frac{1}{t} \tilde{\omega}_t  } \omega_\C |_{ \frac{1}{t} \tilde{\omega}_t   }^2)
(\frac{1}{t} \tilde{\omega}_t)^n \leq \frac{C } {|\log t| }.
\]
Restricted to any compact set in the smooth locus, the trace term converges smoothly to the constant one, so in the limit
\[
\int |\nabla du|_{\omega_\infty}^2 \omega_\infty^n =0.
\]
It is clear that $u\leq 1$ is not an essential restriction. We get the qualitative statement that $\nabla du=0$ pointwise on the smooth locus.
\end{proof}

The parallel diffential $du$ induces a parallel $(1,0)$ type vector field by the complexified Hamiltonian construction:
\[
\iota_V \omega_{\infty}=d\bar{u}.
\]
This $V$ is holomorphic on the regular locus of $X_{f(P)}\times \C$. Now restrict  $V$ to the regular part of each fibre $X_{f(P)}$; this defines a holomorphic vector field in the bundle $TX_{f(P)}\oplus \C$. This $\C$ factor has generator $\frac{\partial }{\partial u}$.

We impose the technical condition that there is no nonzero holomorphic tangent vector field on $X_{f(P)}^{\text{reg}}$; it is satisfied for example if $X_{f(P)}$ is a nodal K3 surface. This condition forces $V$ to lie entirely in the $\C$ factor. Morever, on each fibre $X_{f(P)}$, $V$ must be a constant multiple of $\frac{\partial }{\partial u}$. We can then write $V=\lambda(u)  \frac{\partial }{\partial u}$, where $\lambda$ is a holomorphic function in $u$. Since $du$ and $V$ are both parallel, the quantity $\lambda=du(V)$ must be a constant. These give us enough information to show the limiting metric is a product.

\begin{prop}
The limiting metric $\omega_\infty=\omega_{SF,f(P)}+\omega_\C$ over the smooth locus of $X_{f(P)}\times \C$.
\end{prop}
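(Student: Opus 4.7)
The plan is to combine the structural facts already established in the preceding paragraphs — orthogonality of horizontal and vertical directions derived from the Hamiltonian identity $\iota_V\omega_\infty = d\bar u$, parallelism of both $du$ and $V = \lambda\,\partial_u$ (with $\lambda$ a nonzero constant), fibrewise convergence to $\omega_{SF,f(P)}$, and the submersion trace identity $\Tr_{\omega_\infty}\omega_\C = 1$ — into a global Riemannian product decomposition of $\omega_\infty$ on $X_{f(P)}^{\text{reg}}\times \C$.

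First, I would verify orthogonality: for any $(1,0)$-vector $W$ tangent to a fibre, $\omega_\infty(V,\bar W) = (\iota_V\omega_\infty)(\bar W) = d\bar u(\bar W) = 0$, since $\bar W$ carries no $\partial_{\bar u}$ component. Hence $\C\langle V\rangle$ and the vertical subbundle $TX_{f(P)}$ are Hermitian orthogonal at every point of $X_{f(P)}^{\text{reg}}\times \C$. Because $V$ (equivalently $du$) is parallel for the Chern connection, the horizontal line subbundle is $\nabla$-parallel, and so is its orthogonal complement. In holomorphic coordinates $(z_1,\ldots,z_{n-1},u)$ pulled back through the biholomorphism $f_\infty$, orthogonality gives $g_{i\bar u}=0$ for $i<n$, while $\nabla V=0$ with $\lambda$ constant forces $\partial_u g_{i\bar j}=0=\partial_u g_{u\bar u}$. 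Thus locally $\omega_\infty = \pi_{\mathrm{fib}}^{*}\eta + \pi_{\C}^{*}\mu$ for smooth K\"ahler forms $\eta$ on $X_{f(P)}^{\text{reg}}$ and $\mu$ on $\C$, and since the $\C$-factor is simply connected these local splittings patch into a global decomposition of the same form.

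To identify the two summands I restrict $\omega_\infty$ to a slice $\{u = \mathrm{const}\}$: by the previous proposition this equals $\omega_{SF,f(P)}$, hence $\eta = \omega_{SF,f(P)}$. The form $\mu$ on $\C$ is translation-invariant under the isometric $\C$-action generated by $\operatorname{Re}V$ and $\operatorname{Im}V$, so $\mu = c\,\omega_\C$ for some positive constant $c$. In the product metric one has $\Tr_{\omega_\infty}\omega_\C = \Tr_{c\omega_\C}\omega_\C = 1/c$, and the submersion identity $\Tr_{\omega_\infty}\omega_\C = 1$ forces $c = 1$, giving $\omega_\infty = \omega_{SF,f(P)} + \omega_\C$. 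The main subtlety is upgrading the infinitesimal parallel splitting to a genuine global product, which is precisely where the already-proved biholomorphism $f_\infty\colon\{H>0\}\to X_{f(P)}^{\text{reg}}\times\C$ and the contractibility of the $\C$-factor intervene; once those are in hand the normalisation constant $c=1$ drops out of the concentration estimate that produced the trace identity.
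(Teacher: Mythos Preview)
Your argument is correct and follows essentially the same route as the paper's own proof: use the Hamiltonian identity to see that $V$ is Hermitian-orthogonal to the fibre directions, invoke the previously established fibrewise restriction $\omega_\infty|_{X_{f(P)}}=\omega_{SF,f(P)}$, and then fix the horizontal factor via the submersion identity $\Tr_{\omega_\infty}\omega_\C=1$. The paper compresses this into four sentences, while you spell out the vanishing of the mixed components $g_{i\bar u}$ and the $u$-independence of $g_{i\bar j}$ explicitly; your use of $\nabla V=0$ to deduce $\partial_u g_{i\bar j}=0$ is in fact redundant here, since the fibrewise restriction is already known to equal $\omega_{SF,f(P)}$ on \emph{every} slice $\{u=\text{const}\}$, but it does no harm.
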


\begin{proof}
We know $\omega_\infty$ restricted to the fibres is just $\omega_{SF,f(P)}$. By construction, the vector field $V$ defines the Hermitian orthogonal complement of the holomorphic tangent space of the fibres. We know $V$ is a constant multiple of $\frac{\partial }{\partial u}$, where the constant is specified by the Riemmanian submersion property. The claim follows.
\end{proof}

\begin{cor}
The metric space $(  X_{f(P)}\times \C  ,\omega_{SF,f(P)}+\omega_\C)$ is the metric completion of its smooth locus.
\end{cor}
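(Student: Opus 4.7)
The metric on the smooth locus $X_{f(P)}^{\text{reg}} \times \C$ is the Riemannian product $\omega_{SF,f(P)} + \omega_\C$, so its induced length distance splits as
\[
d((x_1,u_1),(x_2,u_2)) = \sqrt{d_{SF}(x_1,x_2)^2 + |u_1-u_2|^2},
\]
and the metric completion of a product length space is the product of the completions of the factors. Since $(\C,\omega_\C)$ is already complete, the corollary reduces to the claim that $(X_{f(P)},\omega_{SF,f(P)})$ is the metric completion of its smooth locus $(X_{f(P)}^{\text{reg}},\omega_{SF,f(P)})$.

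In the Lefschetz K3 setting, the singular fibre $X_{f(P)}$ has only isolated ordinary double points, each locally biholomorphic to the $A_1$ singularity $\C^2/\Z_2$. As has already been exploited in the paper (see the proof that $\text{diam}(\frac{1}{t}\tilde{\omega}_t|_{X_0})\leq C$), the singular Calabi-Yau metric $\omega_{SF,f(P)}$ admits an orbifold description at each node: pulling back through the local $\Z_2$ double cover produces a Ricci-flat K\"ahler metric on a small ball in $\C^2$ which is uniformly equivalent to the flat metric, and in particular extends across the origin. Hence $\omega_{SF,f(P)}$ is locally bi-Lipschitz to the flat orbifold metric on $\C^2/\Z_2$ near each node.

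Granted this local picture, the density and completeness part is routine. $X_{f(P)}^{\text{reg}}$ is topologically dense in $X_{f(P)}$ since the singular set is finite, so it remains to check that each Cauchy sequence $(x_n) \subset X_{f(P)}^{\text{reg}}$ converges to a point of $X_{f(P)}$. Either such a sequence is eventually bounded away from all nodes, in which case it sits in a compact subset of the smooth locus where $\omega_{SF,f(P)}$ is a smooth complete Riemannian metric, and converges to a smooth point, or else it subsequentially enters a small orbifold neighborhood of a node; after passing to the local $\Z_2$ cover it becomes a Cauchy sequence with respect to a metric that extends smoothly across the origin of $\C^2$, and its limit in the quotient is either a smooth point or the node itself. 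Combining with the product splitting above yields the corollary.

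The only delicate input is the orbifold structure of $\omega_{SF,f(P)}$ at the nodes, but this is classical for nodal K3 surfaces (via Kobayashi's orbifold construction, or EGZ-type results on singular Calabi-Yau metrics) and is implicit in the earlier parts of the paper. I do not expect any other serious obstacle.
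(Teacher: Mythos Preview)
Your argument is correct and shares the paper's overall structure: both reduce, via the product splitting, to the single-factor statement that $(X_{f(P)}, \omega_{SF,f(P)})$ is the metric completion of its smooth locus. The difference lies in how that single-factor statement is justified. You exploit that for $n=3$ the nodal fibre is a K3 orbifold with $A_1$ singularities, so the singular Calabi-Yau metric is a genuine orbifold metric by the orbifold version of Yau's theorem; the local model is then flat $\C^2/\Z_2$ and the completion argument is elementary. The paper instead invokes Hein--Sun \cite{HeinSun}, which shows that the singular Calabi-Yau metric near an ODP is modelled on the Stenzel cone. This is a much deeper input, but it has the advantage of covering Lefschetz fibrations with any $n\geq 3$: once the fibre dimension is at least $3$ the ODP is no longer an orbifold point, and your orbifold route would not apply. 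In the K3 case the two local models coincide (the Stenzel cone in complex dimension $2$ is exactly flat $\C^2/\Z_2$), so your argument is the more economical one for the case the paper actually needs.
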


\begin{proof}
This follows from the corresponding statement without the $\C$ factor.
This is true for Lefschetz fibrations with $n\geq 3$, for example by the recent work of Hein and Sun \cite{HeinSun}, the singular Calabi-Yau metric on the ODP fibre is modelled on the Stenzel metric.
\end{proof}

\begin{cor}
The holomorphic inverse map $f_\infty^{-1}$ defined on the smooth locus $(X_{f(P)}^{\text{reg} }\times \C, \omega_{SF,f(P)}+\omega_ \C)$ extends to a continuous map \[f_\infty^{-1}: (  X_{f(P)}\times \C  ,\omega_{SF,f(P)}+\omega_ \C)\to Z,\] which is a right inverse of $f_\infty$, and isometric to its image.  
\end{cor}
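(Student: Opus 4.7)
Set $g := f_\infty^{-1}$ on $X_{f(P)}^{\mathrm{reg}} \times \C$, which by the preceding proposition pulls $\omega_\infty$ back to $\omega_{SF,f(P)} + \omega_\C$ and is thus a local Riemannian isometry onto $\{H > 0\} \subset Z$. The plan is to first show $g$ is $1$-Lipschitz for the intrinsic distances, then extend by uniform continuity to the metric completion, and finally verify that the extension is both a right inverse of $f_\infty$ and isometric onto its image.

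For the $1$-Lipschitz property of $g$, fix $x, y \in X_{f(P)}^{\mathrm{reg}} \times \C$ and $\epsilon > 0$. Since the nodal singularity of $X_{f(P)}$ is isolated, the bad set $\mathrm{Sing}(X_{f(P)}) \times \C$ has complex codimension $\geq 2$ in $X_{f(P)} \times \C$; combined with the preceding corollary (metric completion), this produces a path $\gamma$ entirely in $X_{f(P)}^{\mathrm{reg}} \times \C$ from $x$ to $y$ of length $< d(x,y) + \epsilon$. Applying the local isometry $g$ gives a path in $Z$ of the same length, so $d_Z(g(x), g(y)) \leq d(x,y) + \epsilon$, and letting $\epsilon \to 0$ yields the $1$-Lipschitz bound. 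Being uniformly continuous on a dense subset of the complete space $X_{f(P)} \times \C$, with target the compact (hence complete) $Z$, $g$ extends uniquely to a continuous $1$-Lipschitz map $\bar{g}: X_{f(P)} \times \C \to Z$. The identity $f_\infty \circ g = \mathrm{id}$ holds tautologically on the dense smooth locus and extends by continuity of $f_\infty$ and $\bar{g}$ to $f_\infty \circ \bar{g} = \mathrm{id}$ everywhere.

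The main obstacle is the reverse inequality $d_Z(\bar{g}(x), \bar{g}(y)) \geq d(x,y)$ required for the isometric property. This reduces to showing $f_\infty$ is itself $1$-Lipschitz on the image of $\bar{g}$: once established, $d(x,y) = d(f_\infty \bar{g}(x), f_\infty \bar{g}(y)) \leq d_Z(\bar{g}(x), \bar{g}(y))$ closes the argument. On the open dense set $\{H > 0\}$, $f_\infty$ is a local Riemannian isometry, so pointwise it preserves lengths. To promote this to a global $1$-Lipschitz estimate, I would invoke the Cheeger--Colding structure theory for non-collapsed Ricci-flat Gromov-Hausdorff limits, which guarantees Hausdorff codimension $\geq 4$ for $Z^{\mathrm{sing}}$. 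Since $f_\infty|_{\{H>0\}}$ is a smooth biholomorphism onto a smooth Kähler target, $\{H > 0\} \subset Z^{\mathrm{reg}}$, so the set $\{H = 0\}$ (of measure zero by the full-measure property) lies inside $Z^{\mathrm{sing}}$ and therefore has Hausdorff codimension $\geq 4$. Arbitrary nearly-minimizing paths in $Z$ can then be perturbed into $\{H > 0\}$ with arbitrarily small length change, and on such paths $f_\infty$ preserves length exactly, delivering the global $1$-Lipschitz bound and completing the proof of the isometric embedding property.
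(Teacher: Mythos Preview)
Your overall strategy is sound, and since the paper gives no proof for this corollary beyond invoking the preceding metric-completion statement, your argument is considerably more detailed than what appears there. The extension of $g$ by uniform continuity, the right-inverse identity by density, and the reduction of the isometric embedding property to a global $1$-Lipschitz bound on $f_\infty$ are all correct. (One small slip: $Z$ is not compact, only complete as a pointed Gromov--Hausdorff limit; completeness is all you need for the extension.)

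There is, however, a genuine gap at the key step where you write ``$\{H>0\}\subset Z^{\mathrm{reg}}$, so the set $\{H=0\}$ \ldots\ lies inside $Z^{\mathrm{sing}}$.'' This implication is set-theoretically invalid: from $A\subset B$ one gets $B^c\subset A^c$, not $A^c\subset B^c$. A priori there could be regular points of $Z$ with $H=0$. The paper does eventually establish $\{H>0\}=Z^{\mathrm{reg}}$, but only in the theorem \emph{following} this corollary, and the argument there uses the isometry you are trying to prove, so invoking it here would be circular. The full-measure property by itself only gives that $\{H=0\}$ has measure zero, which is insufficient for curve perturbation.

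A clean repair is to bound $\{H=0\}\cap Z^{\mathrm{reg}}$ directly. On $Z^{\mathrm{reg}}$ the map $f_\infty$ is holomorphic into $X\times\C$, and $\{H=0\}\cap Z^{\mathrm{reg}}$ is precisely the preimage of the complex-analytic set $\mathrm{Sing}(X_{f(P)})\times\C$; hence it is itself a complex-analytic subvariety of the complex manifold $Z^{\mathrm{reg}}$. By the full-measure property $\{H>0\}$ is dense in $Z$, so this subvariety is proper and therefore has real codimension at least $2$. Combined with the Cheeger--Colding bound $\mathrm{codim}_{\mathcal H}\,Z^{\mathrm{sing}}\geq 4$, you get $\mathrm{codim}_{\mathcal H}\,\{H=0\}\geq 2$ in $Z$, which is enough to perturb near-minimizing curves into $\{H>0\}$ and complete your $1$-Lipschitz argument for $f_\infty$.
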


\begin{thm}
In the Lefschetz fibration case, suppose we assume the uniform fibre diameter bound, and that there is no nonzero holomorphic tangent vector field on $X_{f(P)}^{\text{reg}}$. Then		
the map $f_\infty: (Z, \omega_\infty) \to (X_{f(P)} \times \C, \omega_{SF,f(P)}+\omega_ \C)$ is an isometry, and naturally an isomorphism of normal varieties.
\end{thm}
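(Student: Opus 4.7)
My plan is to leverage the preceding corollary, which produces a continuous isometric right-inverse
\[
f_\infty^{-1}: (X_{f(P)} \times \C,\, \omega_{SF,f(P)} + \omega_\C) \to Z,
\]
and upgrade it to a two-sided inverse of $f_\infty$. The main work is to show $f_\infty^{-1}$ is surjective; once that is in hand, everything else falls out.

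For surjectivity, I would argue that the image of $f_\infty^{-1}$ is both closed and dense in $Z$. It is closed because the source is metrically complete (by the previous corollary it is the metric completion of its smooth locus) and isometric images of complete metric spaces inside complete metric spaces are closed. On the other hand, the image contains the open subset $\{H > 0\} \subset Z$, because on that set $f_\infty$ is already established to be a biholomorphism, so $f_\infty^{-1}$ must agree there with the local inverse of $f_\infty|_{\{H>0\}}$. By the full measure property proved earlier, $\{H=0\}$ has measure zero in $Z$, so $\{H>0\}$ is dense. A closed set containing a dense subset equals the ambient space, so $f_\infty^{-1}(X_{f(P)} \times \C) = Z$. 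Then for any $Q \in Z$, writing $Q = f_\infty^{-1}(x,u)$ and applying the right-inverse identity gives $f_\infty^{-1}(f_\infty(Q)) = f_\infty^{-1}(x,u) = Q$, so $f_\infty^{-1}$ is a two-sided inverse and $f_\infty$ is a bijective isometry.

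For the identification as normal varieties, I would use that $Z$ carries a natural complex structure on its regular locus and is a normal analytic variety by the general structure theory of non-collapsed Ricci-flat K\"ahler Gromov--Hausdorff limits (cf.\ Donaldson--Sun), while the target $X_{f(P)} \times \C$ is normal because the nodal K3 factor has an isolated ordinary double point and $\C$ is smooth. On the regular loci we have already shown that $f_\infty$ is a holomorphic bijection, hence a biholomorphism, and it is a homeomorphism everywhere by the isometry just proved; since the singular loci on both sides have (complex) codimension at least $2$, the standard extension theorem for holomorphic maps between normal complex spaces promotes this to an isomorphism of normal analytic varieties. The point I expect to require most care is verifying that the complex structure on $Z$ produced by the Cheeger--Colding--Donaldson--Sun machinery coincides with the one transported from $X_{f(P)} \times \C$ via $f_\infty^{-1}$, but this is forced because $f_\infty$ is holomorphic on the dense regular locus $\{H>0\}$ and both complex structures are K\"ahler compatible with the common limiting metric $\omega_\infty$.
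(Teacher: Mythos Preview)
Your isometry argument is correct and matches the paper's approach: both use the full measure property of $\{H>0\}$ together with the right-inverse corollary. Your explicit closed-plus-dense argument for surjectivity of $f_\infty^{-1}$ is a nice elaboration of what the paper states in one line.

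For the normal variety statement, your route diverges from the paper's. You invoke the Donaldson--Sun structure theory to assert that $Z$ is \emph{a priori} a normal analytic variety, then appeal to an extension theorem for holomorphic maps between normal spaces; this forces you into the compatibility check you flag at the end. The paper avoids all of this. It first observes that any point of $Z$ with $H=0$ is a \emph{metric} singularity (the tangent cone is not Euclidean), hence cannot lie in $Z^{\text{reg}}$, so $Z^{\text{reg}}=\{H>0\}$ exactly. Then, since $f_\infty$ is already a homeomorphism and restricts to a biholomorphism $Z^{\text{reg}}\simeq X_{f(P)}^{\text{reg}}\times\C$, the paper simply \emph{defines} the structure sheaf on $Z$ by transport from the normal variety $X_{f(P)}\times\C$. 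No extension theorem or pre-existing global normality of $Z$ is needed. Your approach is not wrong in spirit, but the Donaldson--Sun normality result is proved for polarised compact limits, and applying it directly in this local non-compact setting would require justification the paper does not provide; the paper's transport argument sidesteps this entirely. You also implicitly use $Z^{\text{reg}}=\{H>0\}$ (when you say ``on the regular loci we have already shown $f_\infty$ is a holomorphic bijection'') without proving the reverse inclusion $Z^{\text{reg}}\subset\{H>0\}$, which is the metric-singularity observation above.
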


\begin{proof}
The right inverse $f_\infty^{-1}$ must be surjective using the full measure property of the set $\{H>0 \}$ on $Z$. This combined with the above corollary proves the isometry statement, so in particular implies homeomorphism. We know $f_\infty$ is a biholomorphism \[\{ H>0\}\subset Z \simeq   X_{f(P)}^{\text{reg} }\times \C.\] 
Since a point in $Z$ with $H=0$ is a metric singularity, it cannot be a smooth point in the complex geometric sense, so $\{H>0\}\subset Z$ coincides with the regular locus $Z^\text{reg}$, and $f_\infty$ is an isomorphism between regular loci. This allows us to identify the structure sheaf of $Z$ with that of the normal variety $X_{f(P)}\times \C$.
\end{proof}

\section{Gromov Hausdorff convergence to the base}\label{GHconvergencetobase}

The collapsing sequence involves two apparent characteristic scales, where the fibre diameter is of constant order, or the base diameter is of constant order. We now turn to the latter case, and study the Gromov-Hausdorff convergence. The optimistic expectation is that there is a GH limit space homeomorphic to the base manifold $Y$, and equipped with a K\"ahler metric $\tilde{\omega}_0  $ on $Y\setminus S$, which extends to a K\"ahler current on $Y$, cohomologous to $\omega_Y$, solving the so called generalised K\"ahler-Einstein equation (\cf \cite{To} )
\[
\text{Ricci}( \tilde{\omega}_0 )=\text{Weil-Petersson metric},
\]
or rather its integrated form
\begin{equation}\label{generalisedKahlerEinstein}
\tilde{\omega}_0 ^m=\frac{ \int_Y  \omega_Y^m} {  \int_X i^{n^2}\Omega\wedge \overline{\Omega}  } f_*( i^{n^2}\Omega\wedge \overline{\Omega}    )    ,
\end{equation}
and morever $Y$ is homeomorphic to the metric completion of the smooth locus $Y\setminus S$.  This question has received a lot of attention since the work of Gross and Wilson \cite{GrossWilson}, and has been studied notably by \cite{To}, \cite{GrossToZhang}, \cite{GrossToZhang2}. Part of the difficulties come from the absence of a good Riemannian convergence theory in the collapsing case, and for higher dimensional base, the lack of knowledge of the limiting metric near the singular locus. V. Tosatti informs the author that this picture for one dimensional base $Y$ has been established in \cite{GrossToZhang2}, using Hodge theory and collapsed Cheeger-Colding theory. For higher dimensions, the main settled case is the hyperK\"ahler manifolds with a holomorphic Lagrangian torus fibration.

The aim of this section is a much easier proof of the isometric description of the GH limit in the following special situation.

\begin{thm}
Suppose the volume condition (\ref{volumecondition}) and the uniform fibre diameter bound (\ref{uniformfibrediameter}) hold, and the base manifold $Y$ is one dimensional, then the sequence of Ricci flat spaces $(X, \tilde{\omega}_t)$ converges in GH sense to $Y$ with the K\"ahler metric 
\begin{equation}
\tilde{\omega}_0=\frac{ \int_Y  \omega_Y} {  \int_X i^{n^2}\Omega\wedge \overline{\Omega}  } f_*( i^{n^2}\Omega\wedge \overline{\Omega}    ) =\frac{a_0}{n}  f_*( i^{n^2}\Omega\wedge \overline{\Omega}    ).
\end{equation}
\end{thm}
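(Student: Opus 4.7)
The plan is to show the map $f: (X, \tilde\omega_t) \to (Y, \tilde\omega_0)$ is itself an $\epsilon_t$-Gromov--Hausdorff approximation, with $\epsilon_t \to 0$. Since the uniform fibre diameter bound yields $\mathrm{diam}_{\tilde\omega_t}(X_y) \leq Ct^{1/2}$ and $f$ is surjective, $f$ is automatically $O(t^{1/2})$-dense, so only the distance comparison $|d_{\tilde\omega_0}(f(x_1),f(x_2)) - d_{\tilde\omega_t}(x_1,x_2)|\leq \epsilon_t$ remains.

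For the upper estimate $d_{\tilde\omega_0}(f(x_1), f(x_2)) \leq d_{\tilde\omega_t}(x_1, x_2) + o(1)$, it suffices to show $f$ is $(1+o(1))$-Lipschitz into $(Y, \tilde\omega_0)$. Write $\omega_{Y,y_0} = \phi(y_0)\,\omega_{\mathrm{Eucl}}$ on a chart, where $\omega_{\mathrm{Eucl}}$ is a fixed Euclidean metric and $\phi = f_*(i^{n^2}\Omega\wedge\overline\Omega)/\omega_{\mathrm{Eucl}}$, so that $\tilde\omega_0 = \frac{a_0}{n}\phi\,\omega_{\mathrm{Eucl}}$. Applied at $y_0 = f(x)$, the concentration estimate (Proposition \ref{concentrationestimate}) gives $\Tr_{\tilde\omega_t}\omega_{\mathrm{Eucl}}|_x \leq \frac{n}{a_t\phi(f(x))} + o(1)$. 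Since $Y$ has dimension one, the only nonzero eigenvalue of $f^*\tilde\omega_0$ with respect to $\tilde\omega_t$ coincides with $\Tr_{\tilde\omega_t} f^*\tilde\omega_0 = \frac{a_0}{n}\phi(f(x))\Tr_{\tilde\omega_t}\omega_{\mathrm{Eucl}}|_x$, hence the squared operator norm of $df$ satisfies $\|df\|_{\mathrm{op}}^2 \leq \frac{a_0}{a_t} + o(1) \to 1$. Integration of $\|df\|_{\mathrm{op}}$ along any curve in $X$ yields the claim.

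For the reverse estimate $d_{\tilde\omega_t}(x_1, x_2) \leq d_{\tilde\omega_0}(f(x_1), f(x_2)) + o(1)$, I would exploit the product-limit description from Section \ref{GHlimitaroundsingularfibresection}, which for each point $P$ identifies the pointed Gromov--Hausdorff limit of $(f^{-1}B_{\omega_Y}(f(P),R),\tfrac{1}{t}\tilde\omega_t)$ with $(X_{f(P)}\times\mathbb{C},\,\omega_{SF,f(P)}+\omega_{\mathbb{C}})$. Cover a near-minimizing path $\gamma$ in $(Y,\tilde\omega_0)$ from $f(x_1)$ to $f(x_2)$ by finitely many base balls whose centres include all critical values of $f$; on each such ball, the product-limit description produces, for all small $t$, a path in $X$ projecting to the corresponding arc of $\gamma$ whose $\tilde\omega_t$-length matches the $\tilde\omega_0$-length up to $o(1)$. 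At the ball overlaps and at the endpoints $x_1,x_2$, the fibre diameter bound lets us patch the pieces together at cost $O(t^{1/2})$ each, and summing the finitely many contributions gives the estimate.

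The main obstacle is making this local-to-global patching quantitative: it demands that the Section \ref{GHlimitaroundsingularfibresection} convergence be uniform with respect to the base point $P\in Y$, which is delicate near the finite set of critical values $S$ (discreteness of $S$ is where $\dim Y = 1$ enters). The uniform fibre diameter bound, the effective smooth convergence on $Y\setminus S$ expressed by (\ref{fibrewisesmoothconvergence}), and the product-limit identification of Section \ref{GHlimitaroundsingularfibresection} together supply this uniformity. Once both distance bounds hold, $f$ is an $o(1)$-GH approximation, and the explicit formula $\tilde\omega_0 = \frac{a_0}{n}f_*(i^{n^2}\Omega\wedge\overline\Omega)$ is then forced by the normalization $\omega_{Y,y}|_y = f_*(i^{n^2}\Omega\wedge\overline\Omega)|_y$ underlying the concentration estimate.
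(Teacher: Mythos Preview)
Your upper bound (the $(1+o(1))$-Lipschitz property of $f$) is essentially identical to the paper's ``almost Schwarz lemma''.

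For the reverse bound, however, the paper takes a considerably more elementary route and does \emph{not} invoke Section~\ref{GHlimitaroundsingularfibresection} at all. Fix $\epsilon>0$. On fibres at distance $\geq\epsilon$ from $S$, the concentration estimate together with the smooth fibrewise bound (\ref{smoothboundTromega0}) forces $|\Tr_{\tilde\omega_t}\tilde\omega_0-1|\leq C(\epsilon)/|\log t|$ pointwise, so $f$ is an almost Riemannian submersion there; the Ehresmann horizontal lift of any arc of $\gamma$ lying in $Y\setminus B_\epsilon(S)$ then has $\tilde\omega_t$-length at most $(1+C(\epsilon)/|\log t|)$ times its $\tilde\omega_0$-length. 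Near each of the finitely many points of $S$, the preimage of an $\epsilon$-ball has $\tilde\omega_t$-diameter $\leq C\epsilon+Ct^{1/2}$ directly from the fibre diameter bound, so one simply jumps across at cost $O(\epsilon)$. Sending $t\to 0$ and then $\epsilon\to 0$ gives the result.

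Your proposal to use the product-limit identification instead has two drawbacks. First, Section~\ref{GHlimitaroundsingularfibresection} is proved only for Lefschetz fibrations under the additional assumption that $X_{f(P)}^{\text{reg}}$ carries no nonzero holomorphic vector field; neither hypothesis appears in the present theorem, so you would be proving a strictly weaker statement. Second, the product limit there is a \emph{subsequential} pointed Gromov--Hausdorff statement, not a quantitative one, so turning it into a uniform path-lifting bound near $S$ is genuine extra work---precisely the obstacle you flag. The paper sidesteps both issues: the Ehresmann lift needs only the concentration estimate and smooth control away from critical points, and the treatment of the $\epsilon$-neighbourhoods of $S$ needs nothing beyond the fibre diameter bound.
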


\begin{rmk}
We observe that there is a natural comparison map $X\to Y$. It is easy to write down the expected limit for one dimensional base, because on Riemann surfaces the K\"ahler form is the volume form, which is prescribed by the generalised K\"ahler Einstein equation. Here $a_0$ is just a cohomological constant in the normalisation of the volume constant (compare (\ref{CalabiYaucondition})). To link this to the earlier part of this work, recall the locally defined Euclidean metric $\omega_{Y,y}$ has the normalisation $\omega_{Y,y}|_y=f_*( i^{n^2}\Omega\wedge \overline{\Omega}    )|_y$. (\cf proposition \ref{concentrationestimate}) Notice also that the volume condition (\ref{volumecondition}) implies $\tilde{\omega_0}$ as defined above is uniformly equivalent to the smooth metric $\omega_Y$ on $Y$. It is also clear that $\tilde{\omega_0}$ is smooth away from $S$. It is not smooth on $S$, but under our assumptions it is uniformly equivalent to the smooth K\"ahler metric $\omega_0$.	
\end{rmk}

%\textcolor{blue}{Plan: first prove that the map f essentially decrease distance when we use the Calabi-Yau metric on both X and Y, then construct an optimal lifting.}

We first show a distance decreasing behaviour which can be compared to Yau's Schwarz lemma.

\begin{prop}(Almost Schwarz lemma)
The derivative of the map $f: (X, \tilde {\omega_t})\to (Y, \tilde{\omega_0})$ satisfies $|\nabla f|_{L^\infty}\leq 1+\frac{C}{|\log t|}$, and therefore
\begin{equation}
d_{ \tilde{\omega_0}   }(f(x_1), f(x_2 ) )\leq d_{ \tilde {\omega_t}  }(x_1, x_2)  (1+\frac{C}{|\log t|}   )
\end{equation}
\end{prop}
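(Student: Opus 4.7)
The plan is to deduce the bound on $|\nabla f|$ directly and pointwise from the concentration estimate (Proposition \ref{concentrationestimate}), using the one-dimensionality of $Y$ in an essential way, and then to convert to distances by integrating along a minimizing geodesic.

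First I would write $|\nabla f|^2 = \Tr_{\tilde{\omega}_t}(f^*\tilde{\omega}_0)$, following the same convention as in the proof of Proposition \ref{concentrationestimate}. Because $\dim Y = 1$, any two real $(1,1)$-forms on $Y$ are pointwise scalar multiples of each other, so their pullbacks to $X$ differ by a scalar factor depending only on the base point. Fix $p\in X$ and set $y_0 = f(p)$. The normalization $\omega_{Y,y_0}|_{y_0} = f_*(i^{n^2}\Omega\wedge\overline{\Omega})|_{y_0}$ combined with $\tilde{\omega}_0 = \frac{a_0}{n} f_*(i^{n^2}\Omega\wedge\overline{\Omega})$ gives $\tilde{\omega}_0|_{y_0} = \frac{a_0}{n}\,\omega_{Y,y_0}|_{y_0}$, and hence
\begin{equation*}
(\Tr_{\tilde{\omega}_t} f^*\tilde{\omega}_0)(p) = \frac{a_0}{n}\,(\Tr_{\tilde{\omega}_t}\omega_{Y,y_0})(p).
\end{equation*}

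Since $p \in X_{y_0} \subset \{|f - y_0|\leq t^{1/2}\}$, Proposition \ref{concentrationestimate} yields
\begin{equation*}
(\Tr_{\tilde{\omega}_t}\omega_{Y,y_0})(p) \leq \frac{n}{a_t} + \frac{C}{|\log t|}.
\end{equation*}
Combining this with the previous display and using $a_t = a_0 + O(t)$, which follows from the explicit cohomological formula for $a_t$, we get the pointwise bound
\begin{equation*}
|\nabla f|^2(p) \leq \frac{a_0}{a_t} + \frac{C\,a_0}{n|\log t|} \leq 1 + \frac{C'}{|\log t|},
\end{equation*}
uniformly on $X$. Taking square roots gives $|\nabla f|_{L^\infty} \leq 1 + C/|\log t|$.

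For the distance statement, join $x_1$ and $x_2$ by a minimizing geodesic $\gamma$ in $(X, \tilde{\omega}_t)$, and note that $f\circ\gamma$ is a curve in $Y$ from $f(x_1)$ to $f(x_2)$, so
\begin{equation*}
d_{\tilde{\omega}_0}(f(x_1), f(x_2)) \leq \mathrm{length}_{\tilde{\omega}_0}(f\circ\gamma) \leq |\nabla f|_{L^\infty}\cdot d_{\tilde{\omega}_t}(x_1,x_2).
\end{equation*}
I do not expect a serious obstacle here; essentially all the analytic work has been absorbed into Proposition \ref{concentrationestimate}. The only delicate conceptual point is that the comparison $\tilde{\omega}_0|_{y_0} = \frac{a_0}{n}\omega_{Y,y_0}|_{y_0}$ is an exact equality only at the single point $y_0$, but this is precisely enough because the pullback $f^*$ evaluates the base form at $f(p) = y_0$ only, so the variation of $\tilde{\omega}_0$ away from $y_0$ never enters the computation.
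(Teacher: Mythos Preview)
Your proposal is correct and follows exactly the same route as the paper: the paper's proof is the single sentence ``This is a simple reinterpretation of the concentration estimate (Proposition \ref{concentrationestimate}), using $|\nabla f|^2= \Tr_{\tilde{\omega}_t}\tilde{\omega}_0$,'' and you have spelled out precisely that reinterpretation, including the normalization $\tilde{\omega}_0|_{y_0}=\frac{a_0}{n}\omega_{Y,y_0}|_{y_0}$ and the passage from $a_0/a_t$ to $1+O(t)$. Your remark that the pointwise comparison at $y_0$ suffices because $f^*$ only samples the base form at $f(p)$ is exactly the right justification.
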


\begin{proof}
This is a simple reinterpretation of the concentration estimate (proposition \ref{concentrationestimate}), using $|\nabla f|^2= \Tr_{ \tilde {\omega_t}   } \tilde {\omega_0} $.
\end{proof}

Now we wish to show a bound in the converse direction. Fix a small $\epsilon>0$.

\begin{lem}
If $\gamma$ is a path of length $l(\gamma)$ in $(Y, \tilde{\omega_0} )$ which stays distance $\epsilon$ bounded away from the singular locus $S$, then $\gamma$ admits a lift to the total space $(X, \tilde {\omega_t}) $ which has length at most $l(\gamma)(1+C(\epsilon)/|\log t|)$. 
\end{lem}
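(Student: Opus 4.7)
The plan is to take $\tilde\gamma$ to be the horizontal lift of $\gamma$ (with respect to $\tilde\omega_t$) through a carefully chosen starting point $x_0 \in X_{\gamma(0)}$, and to bound its length by combining the (already proved) Schwarz direction with the $L^1$ content of the concentration estimate, Proposition \ref{concentrationestimate}. Since $\gamma$ stays $\epsilon$ away from $S$, the horizontal distribution of $\tilde\omega_t$ is well defined on a fixed tube around $\gamma$, and because the base is one-complex-dimensional, the length of the horizontal lift through $x\in X_{\gamma(0)}$ equals
\[
L(x) \;=\; \int_0^{l(\gamma)} \bigl(\Tr_{\tilde\omega_t}(f^*\tilde\omega_0)\bigr)^{-1/2}(\tilde\gamma_x(s))\,ds,
\]
when $\gamma$ is parametrised by $\tilde\omega_0$-arc length. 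The almost Schwarz lemma gives the pointwise upper bound $\Tr_{\tilde\omega_t}(f^*\tilde\omega_0) \le 1+C/|\log t|$, which by itself only yields the \emph{lower} bound $L(x) \ge l(\gamma)(1-C/|\log t|)$.

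To get the matching upper bound, the idea is that although the trace can be small at bad points of a fibre, the average of the trace over each fibre is close to $1$. Concretely, cover an $\epsilon/2$-tube around $\gamma$ by $\sim l(\gamma)/t^{1/2}$ tubes of the form $f^{-1}(B(y,t^{1/2}))$ and apply Proposition \ref{concentrationestimate} on each; summing gives
\[
\int_{f^{-1}(\text{big tube})} \bigl|\Tr_{\tilde\omega_t}(f^*\tilde\omega_0)-1\bigr|\,\tilde\omega_t^{n} \;\le\; C\, l(\gamma)\, t^{n}/|\log t|.
\]
By Fubini this translates (up to $O(t)$ corrections) into an $L^1$ bound of order $C l(\gamma)/|\log t|$ for the fibre integral $\int_0^{l(\gamma)}\!\int_{X_{\gamma(s)}} |\Tr-1|\,\omega_{SF,\gamma(s)}^{n-1}\,ds$.

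Next, I would average $L(x)$ over $x \in X_{\gamma(0)}$ against the semiflat measure $\omega_{SF,\gamma(0)}^{n-1}$ and swap the two integrals. The horizontal transport $\phi_s\colon X_{\gamma(0)}\to X_{\gamma(s)}$ is approximately volume-preserving on the region $\{H \ge c\}$ (quantitatively via the smooth fibrewise convergence (\ref{fibrewisesmoothconvergence})); the complement has small semiflat measure by the volume condition. A Chebyshev argument then produces $x_0$ for which $\int_0^{l(\gamma)} |\Tr-1|(\tilde\gamma_{x_0}(s))\,ds \le C l(\gamma)/|\log t|$ and whose horizontal lift remains inside $\{H \ge c_0(\epsilon)\}$ (this is a positive-measure condition using the uniform fibre diameter bound (\ref{uniformfibrediameter})). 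Split $[0,l(\gamma)]$ into the good set $G = \{s : \Tr \ge 1/2\}$ and its complement $B$. On $G$ one has $\Tr^{-1/2}\le 1+2|\Tr-1|$, so $\int_G \Tr^{-1/2}\,ds \le l(\gamma)+C/|\log t|$. On $B$ the metric upper bound Corollary \ref{upperboundonmetric} together with $H\ge c_0(\epsilon)$ gives $\Tr\ge c'(\epsilon)$, and Chebyshev gives $|B|\le C/|\log t|$, hence $\int_B \Tr^{-1/2}\,ds \le C/|\log t|$. Adding the two contributions yields $L(x_0) \le l(\gamma)(1+C(\epsilon)/|\log t|)$.

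The main obstacle will be the quantitative volume-preservation of $\phi_s$ uniformly along $\gamma$: one needs to pass from the tube-integrated $L^1$ estimate that Proposition \ref{concentrationestimate} delivers directly to a per-fibre $L^1$ bound, and to transfer it between fibres with error $o(1)$. This requires effective use of (\ref{fibrewisesmoothconvergence}) and of how $\omega_{SF,y}$ depends on $y$. A secondary subtlety is simultaneously ensuring that the chosen $x_0$ is Chebyshev-good and that its horizontal lift stays in a region with $H$ bounded below; the $\epsilon$-separation of $\gamma$ from $S$ together with the uniform fibre diameter bound are what make both favourable sets have uniformly positive measure.
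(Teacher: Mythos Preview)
Your route is workable in spirit but far more elaborate than the paper's, and the ``main obstacle'' you flag is created by your own detour.  The paper never averages over starting points or invokes the horizontal transport Jacobian at all.  Instead it uses one ingredient you did not touch: the smooth fibrewise derivative bound (\ref{smoothboundTromega0}) on $\Tr_{\tilde\omega_t}\omega_0$, valid on every fibre with $d_{\omega_Y}(y,S)\ge\epsilon$ since there $H\ge c_0(\epsilon)$ on all of $X_y$.  Combining this $C^k$ bound with the $L^1$ concentration of Proposition~\ref{concentrationestimate} (more precisely with its fibrewise consequence: the identity $\int_{X_y}\Tr_{\tilde\omega_t}\omega_{Y,y}\,\omega_{SF,y}^{n-1}=n/a_t$ together with the pointwise max bound forces $\int_{X_y}|\Tr - n/a_t|\,\omega_{SF,y}^{n-1}\le C/|\log t|$) upgrades the estimate to a \emph{pointwise} one,
\[
\bigl|\Tr_{\tilde\omega_t}\tilde\omega_0-1\bigr|\le \frac{C(\epsilon)}{|\log t|}\qquad\text{on every }X_y\text{ with }d(y,S)\ge\epsilon,
\]
so that $f$ is an almost Riemannian submersion along all of $\gamma$.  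Then the horizontal lift through \emph{any} starting point already has length $\le l(\gamma)(1+C(\epsilon)/|\log t|)$, and no averaging, Chebyshev selection, or good/bad decomposition is needed.

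Two further remarks on your write-up.  First, your ``secondary subtlety'' about keeping the lift in $\{H\ge c_0\}$ is moot: since $\gamma$ stays $\epsilon$ away from $S$ and the base is one-dimensional, every fibre over $\gamma$ is smooth and $H\ge c_0(\epsilon)$ on all of it by compactness.  Second, your passage ``by Fubini this translates \ldots\ into an $L^1$ bound for $\int_0^{l(\gamma)}\int_{X_{\gamma(s)}}|\Tr-1|$'' tries to turn a two-dimensional tube average into a one-dimensional average along $\gamma$; this does not follow from Fubini alone.  The clean way to get the per-fibre $L^1$ bound is the exact-average identity above, not the tube covering.  Even with that fix, your averaging over $x_0$ requires controlling the Jacobian of the $\tilde\omega_t$-horizontal transport over base distances $O(1)$, i.e.\ over $O(t^{-1/2})$ natural-scale units; the smooth bounds of Proposition~\ref{smoothboundawayfromsingularity} only give $O(1)$ control per unit step, which is not obviously summable.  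The paper's pointwise upgrade sidesteps this entirely.
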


\begin{proof}
We recall the smooth bound (\ref{smoothboundTromega0}). When combined with the concentration bound on the trace (proposition \ref{concentrationestimate}), it says $\Tr_{ \tilde {\omega_t}   } {\omega_0}$ can oscillate on the fibre from its average by at most $\frac{C}{|\log t|}$, and the same can be said about $\Tr_{ \tilde {\omega_t}   } {\tilde{\omega_0} }$, which only differs from $\Tr_{ \tilde {\omega_t}   } {\omega_0}$ by a bounded multiple constant on any fixed fibre. This means in particular that on smooth fibres bounded away from $S$,
\[
|\Tr_{ \tilde {\omega_t}   } {\tilde{\omega_0} }-1| \leq \frac{C}{|\log t|}.
\]
The interpretation of this is that $f$ is almost a Riemannian submersion. Whenever there is a Riemannian metric on the total space, the tangent space on $X$ splits into the vertical and the horizontal parts, which defines an Ehresmann connection. The above inequality means the horizontal lift of any unit tangent vector on $Y$ is also a unit tangent vector up to $O(1/|\log t|)$ error. From this the claim is clear.
\end{proof}

\begin{cor}
If $x, x'$ are two points in $X$, then 
\begin{equation}
d_{ \tilde {\omega_t} }(x,x') \leq ( 1+   \frac{C}{|\log t|}  ) d_{ \tilde {\omega_0}  }(x, x')+  C\epsilon +Ct^{1/2}   
\end{equation} 
\end{cor}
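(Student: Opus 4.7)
The plan is to translate the previous lemma, which lifts paths avoiding the singular locus, into a global distance estimate between arbitrary points. By the triangle inequality it suffices to bound the distance on $X$ between $x$ and $x'$ in terms of the distance on $Y$ between $y=f(x)$ and $y'=f(x')$, up to errors coming from (i) avoiding $S$, and (ii) moving within the fibre at the two endpoints.

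Concretely, I would proceed as follows. First, pick a near-geodesic $\gamma_0$ in $(Y,\tilde{\omega}_0)$ from $y$ to $y'$ whose length is at most $d_{\tilde{\omega}_0}(y,y')+\varepsilon$. Since $\dim Y=1$ and $S$ is discrete, I can perturb $\gamma_0$ to a path $\gamma$ of length at most $d_{\tilde{\omega}_0}(y,y')+C\varepsilon$ which stays at $\tilde{\omega}_0$-distance $\varepsilon$ from $S$; the detour cost around each of the finitely many singular values is $O(\varepsilon)$, and the uniform equivalence between $\tilde{\omega}_0$ and $\omega_Y$ (noted in the remark after the theorem) makes this quantitative. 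If $y$ or $y'$ itself lies in the $\varepsilon$-neighborhood of $S$, I simply start/end the path at a nearby point outside that neighborhood, absorbing another $O(\varepsilon)$ error.

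Next, apply the preceding lemma to lift $\gamma$ starting from $x$ to a path $\tilde\gamma$ in $(X,\tilde{\omega}_t)$ of length at most $l(\gamma)\bigl(1+C(\varepsilon)/|\log t|\bigr)$. The endpoint $\tilde\gamma(1)$ lies in the fibre $X_{y'}$. I then connect $\tilde\gamma(1)$ to $x'$ inside $X_{y'}$: by the uniform fibre diameter bound (\ref{uniformfibrediameter}), the diameter of $X_{y'}$ in the metric $\tilde{\omega}_t$ is bounded by $Ct^{1/2}$, contributing an additional $Ct^{1/2}$. An entirely analogous $Ct^{1/2}$ term appears at the starting end if I had to move $x$ off of a fibre over a point $\varepsilon$-close to $S$. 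Collecting these estimates,
\begin{equation*}
d_{\tilde{\omega}_t}(x,x')\leq \bigl(1+C/|\log t|\bigr)\bigl(d_{\tilde{\omega}_0}(y,y')+C\varepsilon\bigr)+Ct^{1/2},
\end{equation*}
which rearranges to the claimed inequality (after absorbing the small cross term $C\varepsilon/|\log t|$ into $C\varepsilon$).

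The routine but main technical ingredient is the lifting lemma already proved, together with the uniform fibre diameter bound; the mild obstacle is checking that the $\varepsilon$-detour around the discrete set $S$ can be made with $O(\varepsilon)$ cost uniformly, which is immediate here because $Y$ is one-dimensional and $\tilde{\omega}_0$ is uniformly equivalent to the smooth metric $\omega_Y$. I expect no further subtleties beyond keeping track of the two error sources.
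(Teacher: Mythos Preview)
Your proposal is correct and follows essentially the same route as the paper: choose a near-minimizing curve in $(Y,\tilde{\omega}_0)$, deal with the finitely many $\epsilon$-neighbourhoods of $S$ at cost $O(\epsilon)$, lift the rest using the preceding lemma, and close up using the $Ct^{1/2}$ fibre diameter bound. The only cosmetic difference is that the paper keeps the original near-geodesic and bounds the preimage of each $\epsilon$-neighbourhood of a singular value directly by $C\epsilon+Ct^{1/2}$, whereas you detour the base curve to avoid $S$ entirely and then lift in one piece; both bookkeeping schemes yield the same estimate.
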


\begin{proof}
Take a curve $\gamma$ joining $f(x)$, $f(x')$, which almost minimises the length: $l(\gamma) \leq d_{ \tilde {\omega_0}  }(f(x), f(x'))+\epsilon $. Then $\gamma$ can spend at most $C\epsilon$ length in the $\epsilon$ neighbourhood of any point in $S$. Observe also that $S$ is a finite set for one dimensional base. Now the diameter of the preimage of the $\epsilon$ neighbourhood of any singular point in $S$ is bounded by $C\epsilon+Ct^{1/2}$, using the fibre diameter bound $Ct^{1/2}$. For the part of $\gamma$ with distance at least $\epsilon$ away from $S$, we can use the previous lemma to construct a lift, and the claim follows.
\end{proof}

From this we see $f$ is almost an isometry, up to an error which can be made smaller than $C\epsilon$ for small $t$. This implies we can define a GH metric on the disjoint union of $X$ and $Y$, which restricts to the distance function on $X\times X$ and $Y\times Y$ individually, and on $X\times Y$,
$
d(x,y)=d_{\tilde{\omega_0} }(f(x), y)+C\epsilon.
$
Both $X$ and $Y$ are $C\epsilon$ dense, so $X$ and $Y$ must be $C\epsilon$ close in Gromov Hausdorff metric. From this the theorem is shown.

\end{document}